\numberwithin{equation}{section}
\newtheorem{theorem}{Theorem}[section]
\newtheorem{corollary}[theorem]{Corollary}
\newtheorem{lemma}[theorem]{Lemma}
\theoremstyle{definition}
\newtheorem{remark}[theorem]{Remark}
\theoremstyle{definition}
\theoremstyle{definition}
\def\dashint{\operatorname%
{\,\,\text{\bf-}\kern-.98em\DOTSI\intop\ilimits@\!\!}}
\def\\det{\text{det}}
\def\Xint#1{\mathchoice
 {\XXint\displaystyle\textstyle{#1}}%
 {\XXint\textstyle\scriptstyle{#1}}%
 {\XXint\scriptstyle\scriptscriptstyle{#1}}%
 {\XXint\scriptscriptstyle\scriptscriptstyle{#1}}%
 \!\int}
\def\XXint#1#2#3{{\setbox0=\hbox{$#1{#2#3}{\int}$}
  \vcenter{\hbox{$#2#3$}}\kern-.5\wd0}}
\def\dashint{\Xint-}
\def\.5{\frac{1}{2}}
\newcommand{\RN}[1]{%
  \textup{\uppercase\expandafter{\romannumeral#1}}%
}
\newcommand{\pll}{\kern 0.56em/\kern -0.8em /\kern 0.56em}
\renewcommand{\epsilon}{\varepsilon}
\newcounter{marnote}
\begin{document}
\title[Stress concentration for nonlinear insulated conductivity problem]{Stress concentration for nonlinear insulated conductivity problem with adjacent inclusions}

\author[Q.L. Chen]{Qionglei Chen}
\address[Q.L. Chen] {Institute of Applied Physics and Computational Mathematics, Beijing, 100088, China.}
\email{chen\_qionglei@iapcm.ac.cn}

\author[Z.W. Zhao]{Zhiwen Zhao}
\address[Z.W. Zhao]{Beijing Computational Science Research Center, Beijing 100193, China.}
\email{zwzhao365@163.com}


\date{\today} 



\begin{abstract}
A high-contrast two-phase nonlinear composite material with adjacent inclusions of $m$-convex shapes is considered for $m>2$. The mathematical formulation consists of the insulated conductivity problem with $p$-Laplace operator in $\mathbb{R}^{d}$ for $p>1$ and $d\geq2$. The stress, which is the gradient of the solution, always blows up with respect to the distance $\varepsilon$ between two inclusions as $\varepsilon$ goes to zero. We first establish the pointwise upper bound on the gradient possessing the singularity of order $\varepsilon^{-\beta}$ with $\beta=(1-\alpha)/m$ for some $\alpha\geq0$, where $\alpha=0$ if $d=2$ and $\alpha>0$ if $d\geq3$. In particular, we give a quantitative description for the range of horizontal length of the narrow channel in the process of establishing the gradient estimates, which provides a clear understanding for the applied techniques and methods. For $d\geq2$, we further construct a supersolution to sharpen the upper bound with any $\beta>(d+m-2)/(m(p-1))$ when $p>d+m-1$. Finally, a subsolution is also constructed to show the almost optimality of the blow-up rate $\varepsilon^{-1/\max\{p-1,m\}}$ in the presence of curvilinear squares. This fact reveals a novel dichotomy phenomena that the singularity of the gradient is uniquely determined by one of the convexity parameter $m$ and the nonlinear exponent $p$ except for the critical case of $p=m+1$ in two dimensions.
\end{abstract}

\maketitle



\section{Introduction}
\subsection{Background and motivation}
This work is devoted to studying the stress concentration phenomena appearing in high-contrast composite materials whose constituents comprise of densely packed fibers and the matrix with significantly different properties. Our initial interest was motivated by the problem of material failure initiation, which has been paid great attention due to its extensive engineering applications. High-contrast composites with close-to-touching fibers can be described by elliptic equations and systems with discontinuous coefficients. The problem of quantitatively characterizing the behavior of the stress, which is the gradient of the solution, was first proposed in the famous work \cite{BASL1999} in relation to numerical analysis of damage initiation and growth in fiber-reinforced composites. From the view of mathematics, one may represent the background matrix by a bounded domain $D\subseteq\mathbb{R}^{d}\,(d\geq2)$ with $C^{1.1}$ boundary, whose interior contains two $C^{1,1}$ inclusions $D_{1}$ and $D_{2}$. These two inclusions are $\varepsilon$ apart and far from the external boundary $\partial D$, where $\varepsilon$ is a sufficiently small positive constant. Denote $\Omega:=D\setminus\overline{D_{1}\cup D_{2}}$. A scalar mathematical model of high-contrast two-phase composite materials is formulated as follows:
\begin{align}\label{pro006}
\begin{cases}
\mathrm{div}(a_{k}(x)|\nabla u_{k}|^{p-2}\nabla u_{k})=0,&\mathrm{in}\;D,\\
u_{k}=\varphi,&\mathrm{on}\;\partial D,
\end{cases}
\end{align}
where $u_{k}\in W^{1,p}(D)$, $p>1$, $\varphi\in C^{1,1}(\partial D)$, and
\begin{align*}
a_{k}(x)=
\begin{cases}
k\in(0,\infty),&\mathrm{in}\;D_{1}\cup D_{2},\\
1,&\mathrm{in}\;\Omega.
\end{cases}
\end{align*}
This model is also called the conductivity model and $k$ is called the conductivity. When $p=2$, it describes a linear background medium and in this case the gradient of the solution has been proved to be bounded independently of the distance $\varepsilon$, see \cite{BV2000,LV2000}. Li and Nirenberg \cite{LN2003} further extended the results to general divergence form elliptic systems with piecewise H\"{o}lder continuous coefficients, which completely demonstrates the numerical observation in \cite{BASL1999}. For more studies on the finite coefficients, see \cite{DZ2016,DL2019,CEG2014,KL2019} and the references therein. However, when the background matrix is nonlinear, to the best of our knowledge, there has not been any corresponding result for problem \eqref{pro006} with $p\neq2$.

When the conductivity $k$ degenerates to be zero or infinity, the situation becomes very different. The key feature of the conductivity problem with degenerate conductivities is that the stress always exhibits singular behavior with respect to the distance between two inclusions. Observe that applying the proofs in the appendixes of \cite{BLY2009,BLY2010} with minor modification, we obtain that by sending $k\rightarrow0$ and $k\rightarrow\infty$, the solution sequence $\{u_{k}\}$ of the original problem \eqref{pro006} converges weakly in $W^{1,p}$ to the solution $u$ of the following insulated and perfect conductivity problems
\begin{align}\label{con002}
\begin{cases}
-\mathrm{div}(|\nabla u|^{p-2}\nabla u)=0,&\hbox{in}\;\Omega,\\
\frac{\partial u}{\partial\nu}=0,&\mathrm{on}\;\partial D_{i},\,i=1,2,\\
u=\varphi, &\mathrm{on}\;\partial D,
\end{cases}
\end{align}
and
\begin{equation}\label{YHU0.002}
\begin{cases}
-\mathrm{div}(|\nabla u|^{p-2}\nabla u)=0,& \mathrm{in}\;\Omega,\\
u=C_{i},&\mathrm{on}\;\partial D_{i},\;i=1,2,\\
\int_{\partial D_{i}}\frac{\partial u}{\partial\nu}=0,&\mathrm{for}\;i=1,2,\\
u=\varphi,&\mathrm{on}\;\partial D,
\end{cases}
\end{equation}
respectively, where $C_{i}$, $i=1,2$ are the free constants determined by the third line of \eqref{YHU0.002} and $\nu$ is the unit outer normal to the domain. From the degenerate elliptic regularity theory in \cite{L1988}, we know that the regularity of weak solution $u$ to problem \eqref{con002} or \eqref{YHU0.002} can be improved to be of $C^{1,\alpha}(\overline{\Omega})$ for some $0<\alpha<1$. With regard to the linear perfect conductivity problem \eqref{YHU0.002} with $p=2$, there has been a long series of literature involving different techniques and methods to study the singular behavior of the gradient. The principal result is that for two strictly convex inclusions, the gradient blow-up rate has been revealed to be
\begin{align*}
\rho_{d}(\varepsilon)=&
\begin{cases}
\varepsilon^{-1/2},&\mathrm{if}\;d=2,\\
|\varepsilon\ln\varepsilon|^{-1},&\mathrm{if}\;d=3,\\
\varepsilon^{-1},&\mathrm{if}\;d\geq4,
\end{cases}
\end{align*}
see \cite{AKLLL2007,BC1984,BLY2009,AKL2005,Y2007,Y2009,K1993} for $d=2$, \cite{BLY2009,LY2009,BLY2010,L2012} for $d=3$ , and \cite{BLY2009} for $d\geq4$, respectively. It is worth emphasizing that the gradient blow-up rate for the perfect conductivity problem only depends on the dimension, but not on the curvatures of interfacial boundaries of inclusions. This is greatly different from the insulated conductivity problem. Specifically, Dong, Li and Yang \cite{DLY2022} found that for $p=2$ and $d\geq3$, the optimal blow-up rate is in close touch with the first non-zero eigenvalue of a divergence form elliptic operator on $\mathbb{S}^{d-2}$, which is determined by the relative principal curvatures of the insulators. When the relative principal curvatures of these two inclusions keep the same (see \cite{DLY2021}), the blow-up rate is more explicit and of order $\varepsilon^{-1/2+\beta}$ with $\beta=[-(d-1)+\sqrt{(d-1)^{2}+4(d-2)}]/4$. For a special case of two spherical insulators in three dimensions, Yun revealed the blow-up rate in the earlier work \cite{Y2016} by establishing the optimal gradient estimates in the shortest segment between two inclusions. In fact, with regard to the study on the stress blow-up rate in dimensions greater than two, it has experienced a long time suspicion about whether the blow-up rate $\varepsilon^{-1/2}$ captured in the previous work \cite{BLY2010} is optimal for $d\geq3$ until Li and Yang \cite{LY202102} achieved a qualitative leap from $\varepsilon^{-1/2}$ to $\varepsilon^{-(1-\gamma)/2}$ for some $\gamma>0$. This improvement leads to the subsequent work to pursue a sharp blow-up rate. For two unit balls in dimensions greater than three, Weinkove \cite{W2021} developed a clear and concise procedure to obtain an explicit blow-up exponent $\gamma$ by constructing an appropriate auxiliary function and using a direct maximum principle argument, which further sharpens the Li-Yang estimate for $d\geq4$. Regarding the case when one insulator is in close proximity to the matrix boundary, see \cite{M2023}. As for the case of $d=2$, the singularity of the gradient in the linear case has been made clear about two decades ago in \cite{AKLLL2007,AKL2005}.

However, the techniques and methods developed in these above-mentioned work are mainly used to deal with the linear problems. There makes no progress on the study of nonlinear case until Gorb and Novikov \cite{GN2012} constructed the explicit upper and lower barriers of the solution to the nonlinear perfect conductivity problem \eqref{YHU0.002}. These barrier functions precisely capture the singularities of the gradient. The subsequent work \cite{CS2019,CS201902} completed by Ciraolo and Sciammetta further extended the results to the anisotropic perfect conductivity problem modeled by the Finsler $p$-Laplacian. Their results showed that for $d\geq2$,
\begin{align*}
|\nabla u|\sim
\begin{cases}
\varepsilon^{-\frac{d-1}{2(p-1)}},&\text{if }p>\frac{d+1}{2},\\
\varepsilon^{-1}|\ln\varepsilon|^{-\frac{1}{p-1}},&\text{if }p=\frac{d+1}{2},\\
\varepsilon^{-1},&\text{if }p<\frac{d+1}{2}.
\end{cases}
\end{align*}

Recently, Dong, Yang and Zhu \cite{DYZ2023} made a breakthrough in the investigation on stress concentration for the nonlinear insulated conductivity problem \eqref{con002}. In the presence of two adjacent strictly convex insulators, they first used mean oscillation estimates to establish the pointwise gradient estimate with the upper bound depending on the height of the narrow region and the local oscillation of the solution. Subsequently, the upper bound was further improved at the sacrifice of the oscillation in a little larger domain by establishing the Krylov-Safonov Harnack inequality in the narrow region. However, the blow-up exponent in the improved gradient estimate is still not explicit. So they continued to construct a supersolution to capture an explicit blow-up rate of the gradient in all dimensions and meanwhile verified the almost optimality of the blow-up rate in two dimensions by providing an explicit subsolution. For the purpose of achieving these above results, a lot of techniques and tools, such as the flattening map, the rescale argument, the extension theory, the comparison estimates, the iterate scheme and the Harnack inequality, are involved. We here would like to point out that the key to understanding their proofs lies in making clear the required range of the value for horizontal length of the thin gap in the process of applying these techniques and tools. In their proofs, the horizontal length of the narrow region is only assumed to be small. So one of our major objectives in this paper is to give a quantitative description for the range of the length of the narrow channel, which greatly contributes to the readers' understanding on the employed idea and techniques. Moreover, we extend their results to the general $m$-convex inclusions (see the definition in Subsection \ref{SUBSEC02}), especially including curvilinear squares with rounded-off angles which are frequently used to design the optimal shape of insulated composites. In fact, we see from \cite{LY2021,Z2023,Z2022} that the blow-up rate exponent will decrease to zero as the convexity exponent $m$ increases to infinity. From the view of geometry, the increase of the convexity exponent $m$ implies that the interfacial boundaries of two adjacent insulators in the narrow region become more and more flat. Besides the local $m$-convex structures, the whole concentric core-shell structure has been revealed to be another type of optimal insulation shape of composite materials in \cite{Z202301}. In addition, the two-dimensional results obtained in this paper reveal a novel blow-up phenomena that contrary to the case of $p<m+1$, the singular behavior of the gradient in the case of $p>m+1$ is completely determined by the nonlinear degree exponent $p$, but independent of the convexity exponent $m$ which characterizes the geometrical feature of the thin gap. Then the case of $p=m+1$ can be called the critical case according to the dichotomy blow-up phenomena.

\subsection{Assumptions and main results}\label{SUBSEC02}
In order to properly state the major results, we first introduce some notations and parameterize the domain. After appropriate shifting and rotation of the coordinate, we suppose that $D_{1}$ and $D_{2}$ are translations of the following two touching inclusions
\begin{align*}
D_{1}:=D_{1}^{\ast}+(0',\varepsilon/2),\quad\mathrm{and}\; D_{2}:=D_{2}^{\ast}+(0',-\varepsilon/2),
\end{align*}
where $D_{i}^{\ast}$, $i=1,2$ satisfy
\begin{align*}
D_{i}^{\ast}\subset\{(x',x_{d})\in\mathbb{R}^{d}\,|\,(-1)^{i+1}x_{d}>0\},\quad\mathrm{and}\;\partial D_{1}^{\ast}\cap\partial D_{2}^{\ast}=\{0\}\subset\mathbb{R}^{d},\quad i=1,2.
\end{align*}
Here and below, we denote $(d-1)$-dimensional variables and domains by adding superscript prime (for instance, $x'$ and $B'$).

For some constant $R_{0}>0$ independent of $\varepsilon$, let the portions of $\partial D_{1}$ and $\partial D_{2}$ near the origin be, respectively, the graphs of two $C^{m}$ functions $\varepsilon/2+h_{1}(x')$ and $-\varepsilon/2+h_{2}(x')$, and $h_{i}$, $i=1,2$ satisfy that for $m\geq2$,
\begin{enumerate}
{\it\item[(\bf{H1})]
$\kappa_{1}|x'|^{m}\leq h_{1}(x')-h_{2}(x')\leq\kappa_{2}|x'|^{m},\;\mathrm{if}\;x'\in B_{2R_{0}}',$
\item[(\bf{H2})]
$|\nabla_{x'}h_{i}(x')|\leq \kappa_{3}|x'|^{m-1},\;\mathrm{if}\;x'\in B_{2R_{0}}',$
\item[(\bf{H3})]
$\|h_{1}\|_{C^{m}(B'_{2R_{0}})}+\|h_{2}\|_{C^{m}(B'_{2R_{0}})}\leq \kappa_{4},$}
\end{enumerate}
where $\kappa_{i}$, $i=1,2,3,4$ are four positive $\varepsilon$-independent constants, $C^{m}:=C^{[m],m-[m]}$ with $[m]$ representing the integer part of $m$. Remark that the parameter $m$ is introduced to describe the convexity feature of interfacial boundaries of the insulators. Especially when $m>2$, the curvatures of the interfacial boundaries degenerate to be zero at the points $(0',\pm\varepsilon/2)$ of the shortest distance between two inclusions which are also called the planar points.

We denote by $B_{r}(x)$ the ball of radius $r$ centered at $x$ and set
\begin{align*}
B_{r}=B_{r}(0),\quad\Omega_{r}(x)=\Omega\cap B_{r}(x).
\end{align*}
For $z'\in B'_{R_{0}},\,0<t\leq2R_{0}$, define a narrow channel by
\begin{align*}
\Omega_{z',t}:=&\{x\in \mathbb{R}^{d}\,|~|x'-z'|<t,\,-\varepsilon/2+h_{2}(x')<x_{d}<\varepsilon/2+h_{1}(x')\}.
\end{align*}
For simplicity, write $\Omega_{t}:=\Omega_{0',t}$ if $z'=0'$. Its upper and lower boundaries are, respectively, represented by
\begin{align*}
\Gamma^{+}_{t}:=\{x\in\mathbb{R}^{d}\,|\,x_{d}=\varepsilon/2+h_{1}(x'),\;|x'|<t\},
\end{align*}
and
\begin{align*}
\Gamma^{-}_{t}:=\{x\in\mathbb{R}^{d}\,|\,x_{d}=-\varepsilon/2+h_{2}(x'),\;|x'|<t\}.
\end{align*}
For $x\in B'_{2R_{0}},$ define
\begin{align}\label{delta}
\delta:=\delta(x')=\varepsilon+h_{1}(x')-h_{2}(x').
\end{align}
From the classical $C^{1,\alpha}$ estimates and the maximum principle (see, for example, \cite{GT1998,L1988}), we know
\begin{align*}
\|u\|_{L^{\infty}(\Omega)}+\|u\|_{C^{1,\alpha}(\Omega\setminus\Omega_{R_{0}})}\leq C\|\varphi\|_{C^{1,1}(\partial D)}.
\end{align*}
This means that it only needs to estimate $|\nabla u|$ in the narrow channel $\Omega_{R_{0}}$. Then we reduce the original problem \eqref{con002} to the problem in the neck as follows:
\begin{align}\label{problem006}
\begin{cases}
-\mathrm{div}(|\nabla u|^{p-2}\nabla u)=0,&\hbox{in}\;\Omega_{2R_{0}},\\
\frac{\partial u}{\partial\nu}=0,&\mathrm{on}\;\Gamma^{\pm}_{2R_{0}}.
\end{cases}
\end{align}

Introduce the following five constants:
\begin{align}
c_{0}=&\kappa_{1}^{-1/m}\min\{1,2^{-(m+1)}\kappa_{1}\kappa_{3}^{-1}\},\label{ZMQC960}\\
\tilde{c}_{0}=&\kappa_{1}^{-1/m}\min\{1,\kappa_{1}\kappa_{3}^{-1}(51840+4^{m+1})^{-1/2}\},\label{C681}\\
R_{0,1}=&2^{-\frac{m}{m-1}}(\kappa_{2}\kappa_{3})^{-\frac{1}{2m-2}}\min\left\{1,\frac{(\kappa_{1}^{1/m}c_{0})^{\frac{1}{2m-2}}}{2^{\frac{1}{m-1}}},\frac{(\kappa_{1}\kappa_{3}^{-1})^{\frac{1}{2m-2}}}{2^{\frac{2m-1}{2m-2}}}\right\},\label{C09}\\
R_{0,2}=&\min\{2^{-\frac{2m+1}{2m-1}}\kappa_{3}^{-\frac{1}{m-1}},(\sqrt{2}\kappa_{2}\kappa_{4}(d-1)^{2})^{-\frac{1}{m}}\},\label{C12}\\
R_{0,3}=&\left(\frac{12}{13}\tilde{c}_{0}\mu_{0}^{j_{0}}\right)^{\frac{1}{m-1}}\kappa_{2}^{-1/m},\label{WAD099}
\end{align}
where $j_{0}=[\frac{6d}{\min\{p,2\}}]+1$ and $\mu_{0}$ is given by \eqref{QM86} below. The first main result is listed as follows.
\begin{theorem}\label{thm001}
Suppose that $D_{1},\,D_{2}\subset D\subseteq\mathbb{R}^{d}\,(d\geq2)$ are defined as above, conditions $\mathrm{(}${\bf{H1}}$\mathrm{)}$--$\mathrm{(}${\bf{H3}}$\mathrm{)}$ hold. Let $u\in W^{1,p}(\Omega_{2R_{0}})$ be the solution of \eqref{problem006} with $p>1$. If $0<R_{0}\leq\min\limits_{1\leq i\leq3}R_{0,i}$, then for a sufficiently small $\varepsilon>0$ and $x\in\Omega_{R_{0}}$,
\begin{align}\label{U002}
|\nabla u(x)|\leq C(\varepsilon+|x'|^{m})^{-1/m}\mathop{osc}\limits_{\Omega_{x',\varrho}}u,\quad\varrho=\frac{\tilde{c}_{0}}{3}\delta^{1/m},
\end{align}
where $C=C(d,m,p,\kappa_{1},\kappa_{3},\kappa_{4})$, $\delta$ and $\tilde{c}_{0}$ are, respectively, defined by \eqref{delta} and \eqref{C681}.
\end{theorem}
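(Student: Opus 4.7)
The plan is a flatten-and-rescale argument of the type used in Dong--Yang--Zhu~\cite{DYZ2023}, adapted to $m$-convex inclusions and, crucially, made quantitative in the horizontal length of the narrow channel. Fix $\hat x = (\hat x', \hat x_d) \in \Omega_{R_0}$ and put $z' = \hat x'$, $\delta_0 := \delta(z')$, and $\varrho := (\tilde c_0/3)\,\delta_0^{1/m}$. The bounds $R_0 \leq R_{0,i}$ with the constants \eqref{ZMQC960}--\eqref{WAD099} are designed precisely so that $\Omega_{z',\varrho} \subset \Omega_{2R_0}$, $\delta(x') \sim \delta_0$ on $B'_\varrho(z')$, and the slopes $|\nabla_{x'} h_i|$ are small on the same disk. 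This is the quantitative ``range of horizontal length'' singled out in the introduction. Next, apply the anisotropic change of variables $y' = (x'-z')/\varrho$, $y_d = (x_d - \tfrac{1}{2}(h_1(x') + h_2(x')))/\delta_0$, which flattens $\Gamma_\varrho^\pm$ to $\{y_d = \pm 1/2\}$ and sends $\Omega_{z',\varrho}$ to (essentially) the unit cylinder $Q = B_1' \times (-1/2,1/2)$. The pushed-forward function $v(y) = u(x)$ then solves a divergence-form quasilinear equation of $p$-Laplace type with bounded measurable anisotropic coefficients and conormal data on $\{y_d = \pm 1/2\}$.

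With this setup, the plan is to establish a pointwise gradient bound for $v$ on $Q$ by combining Lieberman's interior and boundary $C^{1,\alpha}$ regularity~\cite{L1988} for quasilinear conormal problems with a Campanato-type mean oscillation iteration. The target estimate is
\[
  |\nabla_y v(\hat y)| \leq C \operatorname{osc}_Q v = C \operatorname{osc}_{\Omega_{z',\varrho}} u,
\]
with $C = C(d,m,p,\kappa_1,\kappa_3,\kappa_4)$ independent of $\varepsilon$. Once this is in hand, inverting the change of variables contributes a Jacobian factor of size $\varrho^{-1}$ on horizontal derivatives and $\delta_0^{-1}$ on the vertical one, but the conormal boundary structure together with the equation forces $\partial_{x_d} u$ to be controlled by the horizontal derivatives at scale $\varrho^{-1}$, yielding $|\nabla_x u(\hat x)| \leq C\varrho^{-1} \operatorname{osc}_{\Omega_{z',\varrho}} u \leq C\delta_0^{-1/m} \operatorname{osc}_{\Omega_{z',\varrho}} u$. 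Since $\delta_0 \simeq \varepsilon + |z'|^m$ by $(\textbf{H1})$, this is exactly \eqref{U002}.

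The hard part will be obtaining the pointwise bound for $v$ with a constant that is uniform in $\varepsilon$. Although the rescaled domain $Q$ is of unit size, the anisotropic scaling makes the transformed operator degenerate in a manner governed by the thinness ratio $\delta_0/\varrho \simeq \delta_0^{(m-1)/m} \to 0$, so a direct black-box appeal to $C^{1,\alpha}$-theory does not yield the required uniformity. The remedy is to exploit the conormal boundary condition structurally -- averaging in $y_d$ reduces the effective equation to one whose ellipticity constants do not degenerate -- and then run a mean oscillation / compactness decay argument on $Q$. The quantitative choice of $c_0,\,\tilde c_0$ and $R_{0,1},R_{0,2},R_{0,3}$ in \eqref{ZMQC960}--\eqref{WAD099} is precisely what is needed to drive this iteration to completion uniformly across scales and across the $m$-convex parameter.
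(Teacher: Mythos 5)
Your proposal captures the correct physical intuition—flatten the narrow channel, control the gradient on the reference domain uniformly in $\varepsilon$, and push back—and you correctly diagnose the central difficulty: a naive application of regularity theory on the rescaled domain fails because the effective ellipticity constant degenerates with the thinness ratio. However, the remedy you sketch is where the argument breaks.

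The anisotropic rescaling $y' = (x'-z')/\varrho,\ y_d = (x_d - \tfrac12(h_1+h_2))/\delta_0$ to a \emph{unit} cylinder is not what the paper does, and it creates exactly the degeneracy you flag: the transformed $p$-Laplace operator involves $(\varrho^{-2}|\nabla_{y'}v|^2 + \delta_0^{-2}|\partial_{y_d}v|^2)^{(p-2)/2}$, whose structure constants blow up or vanish like powers of $\delta_0/\varrho \simeq \delta_0^{(m-1)/m}$. Your proposed fix—averaging in $y_d$ so the conormal structure collapses the equation to a nondegenerate $(d-1)$-dimensional one—works for linear equations (it is the mechanism behind \cite{DLY2021,DLY2022}), but it does not survive the nonlinearity: the average of $|\nabla u|^{p-2}\nabla u$ is not a function of the average of $\nabla u$ when $p \neq 2$. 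The paper itself makes this point explicitly in Section~\ref{SEC050}, where the reduction to a $(d-1)$-dimensional weighted problem is presented only as an open direction for $p\neq 2$, not a proof technique. Likewise, a ``compactness decay argument'' would not produce the explicit constants you are aiming for.

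The paper's actual route avoids anisotropic rescaling entirely. It keeps the horizontal scale fixed and only flattens the boundary: either locally (translation, rotation, and $z = \Lambda(y) = (y',\,y_d-\chi(y'))$ near a single boundary portion, Claim~2 and Lemma~\ref{LEM030}) or globally in the channel ($\tilde\Lambda$ mapping $\Omega_{x_0',R_0}$ to the thin cylinder $Q_{R_0,\frac12\delta_0}$, followed by \emph{even and $2\delta_0$-periodic extension} in $\tilde z_d$, equation~\eqref{NP066}). Because these maps are perturbations of the identity, the transformed coefficients satisfy $|\tilde A - I_d| \le \tfrac12$ (see~\eqref{EQM896}), so the extended equation is uniformly elliptic with \emph{no degenerate parameter}. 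The constants $c_0,\tilde c_0,R_{0,1},R_{0,2},R_{0,3}$ are chosen precisely to guarantee this nondegeneracy and the inclusions $\Omega_{r/2}(\hat x_0) \subset \Lambda^{-1}(B_r^+) \subset \Omega_{2r}(\hat x_0)$. The pointwise bound is then assembled from a three-regime mean oscillation iteration (interior balls, balls touching one boundary, balls touching both), with the decay estimates of Lemmas~\ref{Lem001}, \ref{Lem096}, \ref{Lem659} glued by the $\phi_j/T_j/\omega_j$ bookkeeping, and the $\delta_0^{-1/m}$ factor is produced at the end by a Caccioppoli inequality on the extended cylinder. That multi-scale structure and the extension trick are the missing ideas your proposal would need, and they are not recoverable from the $y_d$-averaging plan.
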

\begin{remark}
Observe from \eqref{U002} that the upper bound comprises of the blow-up term $(\varepsilon+|x'|^{m})^{-1/m}$ and the oscillation term $\mathop{osc}\limits_{\Omega_{x',\varrho}}u$. The oscillation term is actually a small term due to the boundness of the solution $u$ and the smallness of oscillation domain $\Omega_{x',\varrho}$. Then in the following we aim to extract the remaining decay rate exponent from this oscillation term.
\end{remark}
\begin{remark}
We now give some explanations for the implications of these parameters introduced in \eqref{ZMQC960}--\eqref{WAD099}. First, the constants $c_{0}$ and $\tilde{c}_{0}$ are geometrical quantities which are used to characterize the height equivalence between $\delta(x')$ and other height lines in the thin gap $\Omega_{x',\varrho}$, see \eqref{QWN001} below. Second, the values of horizontal length parameters $R_{0,i}$, $i=1,2,3$ of the thin gap are gradually determined in the process of establishing the pointwise gradient estimate, which represent the valid scope of the techniques and methods employed in this paper. Furthermore, there are some distinct differences for their values. To be specific, $R_{0,i}$, $i=1,2$ are two geometrical quantities independent of the equation, while the dependence of $R_{0,3}$ is more comprehensive and its value depends on $d,m,p,\kappa_{1},\kappa_{3},\kappa_{4}$, that is, $R_{0,3}$ is determined by the domain and $p$-Laplace equation, which can be regarded as the most important parameter to ensure the validity in terms of applications of the techniques and methods.

\end{remark}

For any fixed $0<R_{0}\leq\min\limits_{1\leq i\leq3}R_{0,i}$ and $0<\beta<1$, we define the following constants:
\begin{align}
r_{0,1}=&\min\Big\{2^{1/m},\frac{2}{3}R_{0}\Big\},\label{Z016}\\
r_{0,2}=&\left(\frac{\min\{1,4^{-(m+1)}\kappa_{1}\}}{\kappa_{2}(\kappa_{3}+\kappa_{4})+\kappa_{3}(\kappa_{3}+1)}\right)^{\frac{1}{m-1}}
\begin{cases}
\frac{6^{-\frac{m+2}{m-1}}}{(d-1)^{\frac{2}{m-1}}},&\text{if }2\leq m<3,\vspace{0.1cm}\\
4^{-\frac{m}{m-1}},&\text{if }m\geq3,
\end{cases}\label{Z018}\\
r_{0,3}=&2^{-\frac{3(m+1)}{m-1}}(\kappa_{1}\kappa_{3}^{-1})^{\frac{1}{m-1}},\quad r_{0,4}=\frac{\min\{R_{0}^{\frac{1}{1-\beta}},2^{-\frac{1}{\beta}}\}}{\tilde{c}_{0}(2\kappa_{2})^{1/m}},\label{Z006}
\end{align}
where $\tilde{c}_{0}$ is given by \eqref{C681}. For $d\geq3$, the upper bound in \eqref{U002} can be further improved as follows.
\begin{theorem}\label{thm002}
Let $D_{1},\,D_{2}\subset D\subseteq\mathbb{R}^{d}\,(d\geq3)$ be defined as above and conditions $\mathrm{(}${\bf{H1}}$\mathrm{)}$--$\mathrm{(}${\bf{H3}}$\mathrm{)}$ hold. Assume that $u\in W^{1,p}(\Omega_{2R_{0}})$ is the solution of \eqref{problem006} with $p>1$. For any fixed $0<R_{0}\leq\min\limits_{1\leq i\leq3}R_{0,i}$ and $0<\beta<1$, if $0<r_{0}\leq\min\limits_{1\leq i\leq4}r_{0,i}$, then for a arbitrarily small $\varepsilon>0$ and $x\in\Omega_{r_{0}}$, there exist two positive constants $0<\gamma<1$ and $C>0$ depending only on $d,m,p,\kappa_{i}$, $i=1,2,3,4$ such that
\begin{align}\label{DEM9068}
|\nabla u(x)|\leq C(\varepsilon+|x'|^{m})^{\frac{\beta\gamma-1}{m}}\mathop{osc}_{\Omega_{x',\tilde{\varrho}^{1-\beta}}}u,\quad \tilde{\varrho}=\tilde{c}_{0}\delta^{1/m},
\end{align}
where $\delta$ and $\tilde{c}_{0}$ are defined by \eqref{delta} and \eqref{C681}, respectively.
\end{theorem}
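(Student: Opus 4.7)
The plan is to combine Theorem~\ref{thm001} with an iterated Harnack-type oscillation decay along the narrow channel, thereby replacing the small-scale oscillation $\mathop{osc}\limits_{\Omega_{x',\varrho}} u$ appearing in \eqref{U002} by the larger-scale oscillation $\mathop{osc}\limits_{\Omega_{x',\tilde\varrho^{1-\beta}}} u$ at the price of a polynomial-in-$\delta$ improvement factor. Fix $x\in\Omega_{r_0}$ and write $\tilde\varrho=\tilde c_0\delta^{1/m}$, $\varrho=\tilde\varrho/3$. Since $\varepsilon+|x'|^m\asymp\delta(x')$, the target estimate \eqref{DEM9068} reduces to proving
\[
\mathop{osc}\limits_{\Omega_{x',\varrho}} u \leq C\,\bigl(\varrho/\tilde\varrho^{1-\beta}\bigr)^{\gamma}\,\mathop{osc}\limits_{\Omega_{x',\tilde\varrho^{1-\beta}}} u
\]
for some $\gamma\in(0,1)$, because then $(\varrho/\tilde\varrho^{1-\beta})^{\gamma}\asymp\delta^{\beta\gamma/m}\asymp(\varepsilon+|x'|^m)^{\beta\gamma/m}$ and substituting into \eqref{U002} gives \eqref{DEM9068}.

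The key ingredient is a one-scale decay lemma: there exist constants $\mu_0,\theta\in(0,1)$ depending only on $d,m,p,\kappa_i$ such that for every $t$ with $3\varrho\leq t\leq\tilde\varrho^{1-\beta}$,
\[
\mathop{osc}\limits_{\Omega_{x',\mu_0 t}} u \leq \theta\,\mathop{osc}\limits_{\Omega_{x',t}} u.
\]
I would establish this by an anisotropic rescaling of $\Omega_{x',t}$ onto a reference domain of unit horizontal width and vertical thickness comparable to one (horizontal dilation by $t$, vertical dilation by $\max\{\varepsilon,t^m\}$, together with a shift flattening the channel midline). Hypotheses (\textbf{H1})--(\textbf{H3}) combined with the thresholds in \eqref{C09}--\eqref{Z006} ensure that the rescaled function solves a uniformly nondegenerate $p$-Laplace-type equation on a fixed reference domain whose top and bottom pieces are $C^{1,1}$-small perturbations of hyperplanes, with homogeneous Neumann data; a Krylov--Safonov / De Giorgi--Nash--Moser Harnack inequality applied to $u-c$ for a suitable constant $c$ then yields a definite oscillation improvement factor $\theta<1$, and rescaling back gives the one-step decay.

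Given the one-step decay, set $t_k=\mu_0^{k}\tilde\varrho^{1-\beta}$ and let $k_{\ast}$ be the largest integer with $t_{k_{\ast}}\geq 3\varrho$, so $k_{\ast}\asymp(\beta/m)\log(1/\delta)/\log(1/\mu_0)$; this is precisely the source of the exponent $j_0$ appearing in \eqref{WAD099}. Iteration yields $\mathop{osc}\limits_{\Omega_{x',\varrho}} u\leq\theta^{k_{\ast}}\mathop{osc}\limits_{\Omega_{x',\tilde\varrho^{1-\beta}}} u\leq C\delta^{\beta\gamma/m}\mathop{osc}\limits_{\Omega_{x',\tilde\varrho^{1-\beta}}} u$ with $\gamma=\log(1/\theta)/\log(1/\mu_0)$, and by replacing $\mu_0$ by an appropriate power one may assume $\gamma\in(0,1)$. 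Feeding this bound back into \eqref{U002} produces \eqref{DEM9068}.

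The main obstacle is the uniform one-step Harnack estimate across the whole iteration range $[3\varrho,\tilde\varrho^{1-\beta}]$: one must track how the curvature contributions from $h_1,h_2$ transform under the anisotropic rescaling so that the ellipticity constants of the transformed $p$-Laplace operator, and the regularity of the Neumann graphs $\Gamma^{\pm}$, remain controlled independently of $t$. This is also where the hypothesis $d\geq3$ enters essentially: after rescaling, the reference domain possesses $d-1\geq 2$ non-trivial horizontal directions together with one vertical direction, which is what is required to produce a non-trivial, scale-independent Harnack constant on a thin slab with curved Neumann boundaries; in two dimensions the rescaled region degenerates to an essentially one-dimensional strip, and no additional decay can be extracted by this scheme.
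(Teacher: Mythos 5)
Your overall strategy matches the paper's: combine Theorem~\ref{thm001} with an iterated one-scale oscillation decay obtained from a Harnack inequality on the annular regions $\Omega_{x',t}\setminus\Omega_{x',t/2}$, and you correctly identify that $d\geq3$ is needed because only then is the rescaled annular reference domain connected (in two dimensions it splits into two disjoint strips, and no information can propagate across the channel).

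However, there is a genuine gap in your description of the Harnack step. You write that after rescaling the transformed function solves a ``uniformly nondegenerate $p$-Laplace-type equation,'' and then invoke a ``Krylov--Safonov / De Giorgi--Nash--Moser Harnack inequality.'' This conflates two things, and as stated the step does not go through. Rescaling the divergence-form $p$-Laplace equation $\operatorname{div}(|\nabla u|^{p-2}\nabla u)=0$ never removes its degeneracy or singularity: the ``coefficient'' $|\nabla u|^{p-2}$ vanishes or blows up wherever $\nabla u$ does, so the equation is not uniformly elliptic in divergence form and a De Giorgi--Nash--Moser argument gives no scale-uniform Harnack constant. The paper resolves this (following \cite{DYZ2023}) by first regularizing the problem (equation~\eqref{APPRO} with parameter $\varsigma>0$, whose solutions are $C^2$ with estimates uniform in $\varsigma$), rewriting the regularized equation in its \emph{normalized non-divergence form} \eqref{E901}--\eqref{E902}, $a_{ij}\partial_{ij}u=0$ with $a_{ij}=\delta_{ij}+(p-2)(\varsigma+|\nabla u|^2)^{-1}\partial_i u\,\partial_j u$, whose second-order coefficients satisfy the uniform ellipticity bound \eqref{U9} \emph{independently of} $\nabla u$ and $\varsigma$. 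It is this equation, after the flattening change of variables $\Phi$ of Lemma~\ref{ZWQ08}, to which Krylov--Safonov is applied (Corollary~\ref{COR09}); the non-divergence structure is essential, since the coefficients are merely bounded measurable. Your sketch is missing the regularization and the passage to the normalized equation, which is exactly the point the paper singles out as the difference between the linear and nonlinear cases. Finally, your remark that the iteration count $k_\ast$ ``is precisely the source of the exponent $j_0$ appearing in \eqref{WAD099}'' is a misattribution: $j_0=[\frac{6d}{\min\{p,2\}}]+1$ is a fixed constant used in the mean oscillation summation inside the proof of Theorem~\ref{thm001}, not the variable number of Harnack iterations $k\asymp\beta\log(1/\delta)/(m\log 2)$ in Lemma~\ref{LEM903}.
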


As shown in Theorem \ref{thm002}, although the upper bound on the gradient has been improved, its blow-up exponent is still not clear. So in the next theorem we aim to obtain the explicit gradient blow-up rate. For that purpose, we prepare to consider a class of axisymmetric insulators, whose upper and lower boundaries of the narrow region are, respectively, formulated as follows: for $i=1,2$ and $|x'|<\tilde{r}_{0}$,
\begin{align}\label{U0913}
x_{d}=(-1)^{i+1}\left(\frac{\varepsilon}{2}+h(x')\right)=(-1)^{i+1}\left(\frac{\varepsilon}{2}+\lambda|x'|^{m}+O(|x'|^{m+\sigma_{0}})\right),
\end{align}
where $\tilde{r}_{0}$, $\lambda$ and $\sigma_{0}$ are three positive constants independent of $\varepsilon.$ Here $O(1)$ represents that $|O(1)|\leq C_{0}$ for some $\varepsilon$-independent constant $C_{0}>0$ which may depend on $d,\,m,\,\sigma_{0}$ and $\tilde{r}_{0}$. We further assume that the derivative of $h(x')$ satisfies
\begin{align}\label{U091302}
\partial_{x_{i}}h(x')=m\lambda x_{i}|x'|^{m-2}+O(|x'|^{m-1+\sigma_{0}}),\quad \mathrm{for}\;i=1,...,d-1.
\end{align}
The prototypes of these two insulators under such assumptions are two axisymmetric ellipsoids as follows:
\begin{align}\label{F019}
|x'|^{m}+|x_{d}-\varepsilon/2-\tilde{r}_{0}|^{m}=\tilde{r}_{0}^{m},\quad\mathrm{and}\;|x'|^{m}+|x_{d}+\varepsilon/2+\tilde{r}_{0}|^{m}=\tilde{r}_{0}^{m}.
\end{align}
Here $\tilde{r}_{0}$ is called the radius of ellipsoids. In particular, when $m>2$ and $d=2,$ the shapes of insulators resembling curvilinear squares with rounded-off angles. Applying Taylor expansion to \eqref{F019}, we reformulate the top and bottom boundaries of the thin gap as follows: for $|x'|<\tilde{r}_{0}$,
\begin{align*}
x_{d}=(-1)^{i+1}\Big(\frac{\varepsilon}{2}+\frac{1}{m\tilde{r}_{0}^{m-1}}|x'|^{m}\Big)+O(|x'|^{2m}),\quad\text{for $i=1,2.$ }
\end{align*}
When $p>d+m-1$, we improve the upper bounds in Theorems \ref{thm001} and \ref{thm002} as follows.
\begin{theorem}\label{thm003}
For $d\geq2$, $m>2$ and $p>d+m-1$, let $u\in W^{1,p}(\Omega_{\tilde{r}_{0}})$ be the solution of \eqref{problem006} with $\Gamma_{\tilde{r}_{0}}^{\pm}$ given by \eqref{U0913}. Then for any $0<\tau<\frac{1}{2}(p+1-d-m),$ if $\varepsilon>0$ is sufficiently small,
\begin{align}\label{ARQZ001}
|\nabla u(x)|\leq C(\varepsilon+|x'|^{m})^{-\frac{d+m-2+2\tau}{m(p-1)}},\quad\mathrm{for}\;x\in\Omega_{\tilde{r}_{0}},
\end{align}
where $C=C(d,m,p,\tau,\lambda,\sigma_{0},\tilde{r}_{0})$.
\end{theorem}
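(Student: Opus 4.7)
The plan is to derive \eqref{ARQZ001} by constructing an explicit supersolution (barrier) $\psi$ adapted to the axisymmetric neck whose gradient already carries the desired blow-up rate, applying the comparison principle to bound $|u-u(0)|$ by $C\psi$, and then converting this pointwise oscillation bound into the claimed gradient estimate by feeding it into the oscillation-based inequality \eqref{U002} from Theorem~\ref{thm001}. The parameter $\tau$ in the exponent $(d+m-2+2\tau)/(m(p-1))$ is exactly the amount of slack that will be needed to absorb lower-order errors produced by the curvature correction $O(|x'|^{m+\sigma_{0}})$ in \eqref{U0913}--\eqref{U091302} and by the flattening change of coordinates.

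First I would straighten $\Gamma^{\pm}_{\tilde{r}_{0}}$ via the change of variables $y'=x'$, $y_{d}=(x_{d}+\varepsilon/2+h(x'))/\delta(x')$ sending $\Omega_{\tilde{r}_{0}}$ to the slab $B'_{\tilde{r}_{0}}\times(0,1)$. In the new coordinates the Neumann condition $\partial u/\partial\nu=0$ on $\Gamma^{\pm}_{\tilde{r}_{0}}$ becomes $\partial_{y_{d}}\tilde{u}=0$ on $y_{d}\in\{0,1\}$, so any supersolution depending only on $|y'|$ automatically satisfies the boundary condition---this is precisely why the axisymmetric hypothesis \eqref{U0913} is made. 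The transformed equation is a degenerate quasilinear equation whose leading part, acting on radial profiles $\psi=\phi(|y'|)$, is of the form
\begin{align*}
\left(r^{d-2}\,\delta(r)\,|\phi'(r)|^{p-2}\phi'(r)\right)'=0,
\end{align*}
with $r=|y'|$ and $\delta(r)=\varepsilon+2\lambda r^{m}+O(r^{m+\sigma_{0}})$. Guided by this, I would take $\phi'(r)=A\,r^{-(d-2+\tau_{1})/(p-1)}(\varepsilon+r^{m})^{-(1+\tau_{2})/(p-1)}$ for small positive $\tau_{1},\tau_{2}$ tuned so that $\tau_{1}+m\tau_{2}\sim 2m\tau$; evaluating at the transition scale $r\sim\varepsilon^{1/m}$ shows that $|\nabla\psi|$ saturates exactly at $(\varepsilon+|y'|^{m})^{-(d+m-2+2\tau)/(m(p-1))}$. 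The hypothesis $p>d+m-1$ ensures $(d-2+\tau_{1})/(p-1)<1$, so that $\phi(r)=\int_{0}^{r}\phi'(s)\,ds$ converges at $s=0$ and is uniformly bounded as $\varepsilon\to 0$, making $\psi$ an admissible barrier. Because of the strict positivity of $\tau_{1}$ and $\tau_{2}$, the extra negative term produced by differentiating the weights dominates the perturbations coming from the $O(r^{m+\sigma_{0}})$ correction, from $\delta'(r)\sim r^{m-1}$, from the Jacobian of the flattening, and from the cross terms involving $\nabla_{y'}h/\delta$, so $\psi$ is a genuine $p$-superharmonic function for the transformed equation. The weak comparison principle, applied to $\pm(u-u(0))$ with Dirichlet data on $\{|x'|=\tilde{r}_{0}\}$ provided by the $L^{\infty}$-bound of $u$ outside the neck, then yields $\mathop{osc}\limits_{\Omega_{x',\varrho}}u\leq C\varrho\,(\varepsilon+|x'|^{m})^{-(d+m-2+2\tau)/(m(p-1))}$ at scale $\varrho=\tilde{c}_{0}\delta(x')^{1/m}$; inserting this into \eqref{U002} and using $\varrho\,(\varepsilon+|x'|^{m})^{-1/m}\leq C$ produces \eqref{ARQZ001}.

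The main obstacle will be the verification step: showing that, after flattening, the ansatz $\psi=\phi(|y'|)$ really is a supersolution of the full transformed operator, and not merely of its principal radial part. That operator carries mixed terms generated by $\nabla_{y'}h/\delta$, by $\delta'(r)$, and by the $O(|x'|^{m+\sigma_{0}})$ correction, all coupled nonlinearly through the factor $|\nabla\psi|^{p-2}$. The positive slack $\tau$ is precisely the room needed, via a careful expansion, to extract a strictly negative principal contribution that dominates every one of these corrections; tracking how small $\tau$ may be taken, and identifying the threshold $\tau<\tfrac{1}{2}(p+1-d-m)$, will come from balancing the decay rate produced by $\delta(r)|\phi'|^{p-2}\phi'$ against the next-order terms driven by $\sigma_{0}$. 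I anticipate no new conceptual input beyond this, but the bookkeeping will be the most delicate part of the argument.
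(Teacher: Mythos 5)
Your plan shares the overall strategy of the paper — construct an explicit supersolution, apply the comparison principle to get an oscillation decay, and feed it into \eqref{U002} — but it contains a genuine error at the very first step, and the paper's actual supersolution (Lemma~\ref{Lem695}) is shaped precisely to avoid it.

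The error is the claim that under the flattening $y'=x'$, $y_d=(x_d+\varepsilon/2+h(x'))/\delta(x')$, the Neumann condition $\partial u/\partial\nu=0$ on $\Gamma^\pm_{\tilde r_0}$ becomes $\partial_{y_d}\tilde u=0$ on $\{y_d=0,1\}$, and hence that a barrier depending only on $|y'|$ automatically satisfies it. This is false, because $\partial_{y_d}\Phi=(0',\delta(y'))$ is vertical, not normal to $\Gamma^\pm$. Concretely, a function of $y'$ alone is the same as a function of $x'$ alone, and on $\Gamma^+$ one computes directly
\begin{align*}
\frac{\partial\psi}{\partial\nu}=\frac{-\nabla_{x'}h\cdot\nabla_{x'}\psi}{\sqrt{1+|\nabla_{x'}h|^2}}
=\frac{-\bigl(m\lambda|x'|^{m-1}+O(|x'|^{m-1+\sigma_0})\bigr)\,\phi'(|x'|)}{\sqrt{1+|\nabla_{x'}h|^2}}<0
\end{align*}
for any radial $\psi=\phi(|x'|)$ with $\phi'>0$, and similarly on $\Gamma^-$. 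This is the opposite of the sign $\partial w/\partial\nu\geq 0$ needed for the comparison principle with $\pm u$; the barrier would fail the boundary inequality, and the maximum-principle argument collapses. The mistake persists in the flattened coordinates because the transformed conormal condition is $-\nabla_{x'}h\cdot\nabla_{y'}\tilde u+\delta^{-1}(1+|\nabla_{x'}h|^2)\partial_{y_d}\tilde u=0$, not $\partial_{y_d}\tilde u=0$; to obtain the latter one needs a flattening with $\partial_{y_d}\Phi$ parallel to $\nu$, which is exactly what the paper's correction term $g(y)$ in the map \eqref{QD01} accomplishes (but that map is only introduced for the Harnack argument in Section~\ref{Sec903}, not for the barrier).

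The paper sidesteps the issue by not flattening at all: the supersolution $w(x)=(|x'|^m+a|x'|^{m-2}x_d^2)^{\gamma/m}$ in \eqref{W365}, with $a=m(m+\tau)/2$, carries an explicit, tuned $x_d$-dependence so that the positive term $\partial_d w$ dominates the negative term $-\nabla_{x'}h\cdot\nabla_{x'}w$ by a margin proportional to $\tau$; Lemma~\ref{Lem695} verifies both $\partial w/\partial\nu>0$ on $\Gamma^\pm$ and $-\mathrm{div}(|\nabla w|^{p-2}\nabla w)>0$ in $\Omega_{\hat r_0}\setminus\Omega_{\varepsilon^{2/m}}$, and the threshold $\tau<\tfrac12(p+1-d-m)$ comes out of the interior supersolution computation \eqref{ME009}, not the boundary term. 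If you wish to keep the radial-barrier philosophy, you would either have to add an $x_d$-correction to $\psi$ as the paper does, or perform the more intricate flattening with the $g$-correction and then verify the supersolution property for the full transformed operator — the latter verification being precisely the step you flag as "the main obstacle" and leave unaddressed. As written, the proposal does not close, and the boundary-sign error is the reason.
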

\begin{remark}
First, the blow-up rate $\varepsilon^{-\frac{d+m-2+2\tau}{m(p-1)}}$ is more explicit than that in \eqref{DEM9068}. Second, since $0<\tau<\frac{1}{2}(p+1-d-m)$, then the blow-up rate $\varepsilon^{-\frac{d+m-2+2\tau}{m(p-1)}}$ is less than $\varepsilon^{-1/m}$, which sharpens the result of $p>d+m-1$ in Theorem \ref{thm001}.
\end{remark}
\begin{remark}
From \eqref{ARQZ001}, we see that for any fixed $d\geq2$ and $m>2$, the gradient blow-up rate will decrease as the nonlinear exponent $p$ increases. This indicates that nonlinear composite materials are generally superior to linear composites from the perspective of reducing stress concentration. In summary, in order to improve the properties of composite materials with closely located fibers, we should choose curvilinear squares and cubes as the shapes of fibers and embed them in the nonlinear matrix. Meanwhile, we weaken the convexity of interfacial boundaries of fibers and enhance the nonlinear degree of the matrix as much as possible.

\end{remark}
\begin{remark}
Unlike Theorems \ref{thm001} and \ref{thm002}, the results corresponding to the case of $m=2$ cannot be directly obtained by sending $m\rightarrow2$ in the proofs of Theorems \ref{thm003} and \ref{thm005}. In fact, a close inspection of the proofs shows that the adapted constructions for explicit supersolution and subsolution in the case of $m>2$ make the computations quite different from that of $m=2$, see Lemmas \ref{W365} and \ref{Lem958} for the finer details.

\end{remark}

For $d=2,$ we also demonstrate the almost optimality of the gradient blow-up rates captured in Theorem \ref{thm001} for $1<p\leq m+1$ and Theorem \ref{thm003} for $p>m+1$ by establishing the lower bounds as follows.
\begin{theorem}\label{thm005}
For $d=2$ and $m>2$, Let $D:=B_{5}$ and $D_{i}$, $i=1,2$ be given by \eqref{F019} with $\tilde{r}_{0}=1$. For $p>1$, suppose that $u\in W^{1,p}(D)$ is the solution of \eqref{con002} with $\varphi=x_{1}$ on $\partial D$. Then for any $\tau\in(0,m-2)$ and a sufficiently small $\varepsilon>0$,

$(i)$ if $1<p\leq m+1$,
\begin{align}\label{QMED9068}
\|\nabla u\|_{L^{\infty}(\Omega_{(4m\varepsilon/(m-2-\tau))^{1/m}})}\geq\frac{1}{C}\varepsilon^{\frac{\tau-1}{m}};
\end{align}

$(ii)$ if $p>m+1,$
\begin{align}\label{DEQ0096}
\|\nabla u\|_{L^{\infty}(\Omega_{(4m\varepsilon/(m-2-\tau))^{1/m}})}\geq\frac{1}{C}\varepsilon^{\frac{2\tau-m}{m(p-1)}},
\end{align}
where $C=C(m,p,\tau)$.
\end{theorem}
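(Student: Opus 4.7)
The plan is to combine three ingredients: (i) symmetry-based non-degeneracy of $u$ away from the neck, (ii) an explicit subsolution in the neck reflecting the approximate 1D-flux identity for the $p$-Laplacian, and (iii) the elementary pointwise inequality $u(r^*,0)\le r^*\,\|\nabla u\|_{L^\infty(\Omega_{r_\tau})}$ obtained by integrating along a horizontal segment through the gap. The invariance of $D=B_5$, of the insulators $D_i$ in \eqref{F019}, and of $\varphi(x_1,x_2)=x_1$ under $x_1\mapsto -x_1$ forces the weak solution of \eqref{con002} to be antisymmetric in $x_1$, so $u\equiv 0$ on the $x_2$-axis; an application of the $p$-Laplace strong maximum principle, together with standard $C^{1,\alpha}$ regularity away from the gap, on $D\cap\{x_1>0\}$ then produces an $\varepsilon$-independent $c_*>0$ with $u(1,0)\ge c_*$, which will be the only quantitative input from the boundary data.

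For the subsolution, I would average $\mathrm{div}(|\nabla u|^{p-2}\nabla u)=0$ across the thin $x_2$-fibres in the neck and use $\partial_\nu u=0$ on $\Gamma^\pm$ to obtain the approximate conservation law $(\partial_{x_1}u)^{p-1}\delta(x_1)\approx\mathrm{const}$; this suggests the ansatz
\begin{equation*}
\underline u(x_1,x_2) \;=\; A\int_0^{x_1}(\varepsilon+s^m)^{-1/(p-1)}\,ds \;+\; \varphi_{\mathrm{cor}}(x_1,x_2)
\end{equation*}
in $\Omega_{r_\tau}\cap\{x_1>0\}$, with a small $A>0$ and an $x_2$-dependent corrector $\varphi_{\mathrm{cor}}$ to be designed so that $-\Delta_p\underline u\le 0$ in the half-neck and $\partial_\nu\underline u\le 0$ on $\Gamma^\pm_{r_\tau}$. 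The bound $|\partial_{x'}h_i|\le\kappa_3|x'|^{m-1}$ and the $O(|x'|^{m+\sigma_0})$ perturbation in \eqref{U0913} will dictate the required order of $\varphi_{\mathrm{cor}}$, while the explicit prefactor $4m/(m-2-\tau)$ in $r_\tau$ is engineered exactly so that $\delta(s)\le 2(\varepsilon+s^m)$ throughout $[0,r_\tau]$ and the higher-order geometric error is absorbed at the cost of a factor $\varepsilon^{\tau/m}$ in the final estimate. Comparing $\underline u$ with $u$ on $\partial(\Omega_{r_\tau}\cap\{x_1>0\})$---using $\underline u(0,\cdot)=0$, the non-degeneracy $u\ge c_*$ propagated inward from $\{x_1=1\}$ by a cruder secondary barrier, and the Neumann inequality on $\Gamma^\pm$---will then give $u(r^*,0)\ge L_\tau$ at a point $r^*\asymp\varepsilon^{1/m}$ on the $x_1$-axis with $(r^*,0)\in\Omega$. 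Combined with (iii) this yields $\|\nabla u\|_{L^\infty(\Omega_{r_\tau})}\ge L_\tau/r^*$, and evaluating $L_\tau$ in the two regimes produces the dichotomy: for $1<p\le m+1$ the integral $\int_0^{r^*}(\varepsilon+s^m)^{-1/(p-1)}\,ds$ is non-integrable at $0$ and dominates any fixed constant, so $L_\tau$ saturates at the level of $c_*$ up to the $\varepsilon^{\tau/m}$ corrector-loss, yielding $\|\nabla u\|_{L^\infty}\gtrsim\varepsilon^{(\tau-1)/m}$ and proving \eqref{QMED9068}; for $p>m+1$ the integrand is integrable at $0$, the integral evaluates asymptotically to $\asymp\varepsilon^{(p-1-m)/(m(p-1))}$ which is smaller than the saturation threshold, and a direct computation gives $\|\nabla u\|_{L^\infty}\gtrsim\varepsilon^{(2\tau-m)/(m(p-1))}$, proving \eqref{DEQ0096}.

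The hard part will be the subsolution construction itself, to be carried out in the forthcoming Lemma \ref{Lem958}: the simultaneous verification of $-\Delta_p\underline u\le 0$ in the half-neck and $\partial_\nu\underline u\le 0$ on the curved $m$-convex faces $\Gamma^\pm_{r_\tau}$ forces a delicate choice of $\varphi_{\mathrm{cor}}$, and as already signalled in the Remark following Theorem \ref{thm003}, the degeneracy of the curvature at the planar points $(0,\pm\varepsilon/2)$ makes this computation genuinely different from the $m=2$ case. The threshold $p=m+1$ arises precisely because the two candidate dominant terms in $\Delta_p\underline u$ coming from $\partial_{x_1}^2\underline u$ and $\partial_{x_2}^2\underline u$ balance at this value, which is exactly why the lower-bound exponent changes between \eqref{QMED9068} and \eqref{DEQ0096}.
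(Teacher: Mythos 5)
Your high-level outline coincides with the paper's: exploit the odd symmetry of $u$ in $x_1$ (forcing $u=0$ on $\{x_1=0\}\cap\Omega$), construct a subsolution in the half-neck $\Omega_{\hat r_0}\cap\{x_1>0\}$, transfer the lower bound to $u$ by the comparison principle, and then apply the mean value theorem along the $x_1$-axis to convert the pointwise lower bound into a gradient lower bound. The step $u((4m\varepsilon/(m-2-\tau))^{1/m},0)\le(4m\varepsilon/(m-2-\tau))^{1/m}\|\nabla u\|_{L^\infty}$ is exactly what the paper uses.

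The subsolution ansatz, however, is genuinely different, and as written it contains a gap. Your leading term $g(x_1)=A\int_0^{x_1}(\varepsilon+s^m)^{-1/(p-1)}\,ds$ has
\begin{align*}
\Delta_p g=\partial_{x_1}\bigl((g')^{p-1}\bigr)=-\frac{m A^{p-1}x_1^{m-1}}{(\varepsilon+x_1^m)^2}<0\quad\text{for }x_1>0,
\end{align*}
so on its own it is a strict interior \emph{supersolution}, not a subsolution. The corrector $\varphi_{\mathrm{cor}}$ must therefore reverse the sign of the entire interior expression, not merely absorb small geometric errors; it cannot be a ``small'' perturbation, and you have not shown that a corrector of the needed transverse convexity exists and is simultaneously compatible with $\partial_\nu\underline u\le 0$ on $\Gamma^\pm$. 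The paper avoids this tension by taking the \emph{multiplicative} ansatz (Lemma~\ref{Lem958})
\begin{align*}
\underline w(x)=\Bigl[(|x_1|^m+b|x_1|^{m-2}x_2^2)^{\gamma/m}-\bigl(2m\varepsilon/(m-2-\tau)\bigr)^{\gamma/m}\Bigr]_+,\qquad b=\tfrac{m(m-\tau)}{2},
\end{align*}
with $\gamma=\tau$ when $1<p\le m+1$ and $\gamma=\frac{p-m-1+2\tau}{p-1}$ when $p>m+1$. Here the transverse term $b|x_1|^{m-2}x_2^2$ is homogeneous with the tangential one, so $\partial_{x_2}^2\underline w\sim b\gamma|x_1|^{\gamma-2}$ dominates the $\partial_{x_1}^2$ contribution by choice of $b$, and $\Delta_p\underline w\ge 0$ with $\partial_\nu\underline w\le 0$ are verified by a direct expansion in $|x_1|^{m-2}$ (the parameter $b<\tfrac{m(m+\tau)}{2}=a$ from the supersolution of Lemma~\ref{Lem695} flips the sign). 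The cutoff at $(2m\varepsilon/(m-2-\tau))^{1/m}$ also removes the need for your ``secondary barrier'': $\underline w$ vanishes on $\overline\Omega_{(2m\varepsilon/(m-2-\tau))^{1/m}}$, so it is automatically below $u$ there, and the comparison only has to be checked on $\{x_1=\hat r_0\}$. Finally, notice that your naive bookkeeping for $p>m+1$, taking $A\asymp 1$, would yield $u(r^*,0)\gtrsim\varepsilon^{(p-1-m)/(m(p-1))}$ and hence the rate $\varepsilon^{-1/(p-1)}$, which is strictly stronger than \eqref{DEQ0096}; the gap between that and the stated rate $\varepsilon^{(2\tau-m)/(m(p-1))}$ is exactly the price of making the subsolution inequalities hold, and your outline offers no mechanism that produces this $\varepsilon^{2\tau/(m(p-1))}$ loss. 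The paper's explicit $\underline w$, evaluated at $(4m\varepsilon/(m-2-\tau))^{1/m},0)$, gives $u\gtrsim\varepsilon^{\gamma/m}$ and the exponent $(\gamma-1)/m$ directly, with the $\tau$-dependence of $\gamma$ built in.
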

\begin{remark}
First, a combination of \eqref{U002} and \eqref{QMED9068} yields that the blow-up rate $\varepsilon^{-1/m}$ is almost optimal when $1<p\leq m+1$. Second, combining \eqref{ARQZ001} and \eqref{DEQ0096}, we prove the almost sharpness of the blow-up rate $\varepsilon^{-1/(p-1)}$ when $p>m+1.$ These two facts show a bifurcation phenomena that when $p\neq m+1$, the gradient blow-up rate depends only on one of the nonlinear exponent $p$ and the convexity parameter $m$. Therefore, the case of $p=m+1$ can be regarded as the critical case.
\end{remark}

The rest of this paper is organized as follows. In Section \ref{SEC02}, we give the proof of Theorem \ref{thm001} by establishing mean oscillation estimates on the gradient. Section \ref{Sec903} is devoted to proving Theorem \ref{thm002} by considering a smooth approximating equation which reduces the original problem to the establishment of the Krylov-Safonov Harnack inequality for a second-order uniformly elliptic equation of non-divergence form. In Section \ref{Sec905}, we aim to construct explicit supersolution and subsolution to complete the proofs of Theorems \ref{thm003} and \ref{thm005}. In Section \ref{SEC050}, we point out a potentially effective way to explore the stress blow-up rate in high dimensions by studying the asymptotic behavior for the solutions to a class of $(d-1)$-dimensional weighted $p$-Laplace equations near the origin.

\section{Pointwise upper bound on the gradient}\label{SEC02}

From classical elliptic regularity theory and Caccioppoli inequality, we know that for any $B_{R}(\bar{x})\subset\Omega$ and $0<r<\frac{R}{2}$, there has pointwise gradient estimate
\begin{align*}
|\nabla u(\bar{x})|\leq& C(d,p)\left(\dashint_{B_{r}(\bar{x})}|\nabla u|^{p}dx\right)^{\frac{1}{p}}\notag\\
\leq&\frac{C(d,p)}{r}\left(\dashint_{B_{2r}(\bar{x})}\Big| u(x)-\dashint_{B_{2r}(\bar{x})}u\Big|^{p}dx\right)^{\frac{1}{p}}\leq\frac{C(d,p)}{r}\mathop{osc}_{B_{2r}(\bar{x})}u,
\end{align*}
where $\dashint_{B_{r}(\bar{x})}:=\frac{1}{|B_{r}(\bar{x})|}\int_{B_{r}(\bar{x})}$. However, the considered domain in this paper is a narrow region such that we cannot directly use this way to achieve the pointwise gradient estimate with explicit dependence on the height of thin gap. To overcome this difficulty, we first locally flatten the boundary of the narrow region and then carry out the even extension and periodic extension for the transformed equation. Therefore, we have enough space to establish mean oscillation decay estimates on the gradient, which is critical to the establishment of the desired poinwise gradient estimates.

For any given point $x_{0}=(x_{0}',x_{d})\in\Omega_{R_{0}}$, write
\begin{align}\label{DELTA02}
\delta_{0}:=\delta(x_{0}')=\varepsilon+h_{1}(x_{0}')-h_{2}(x_{0}').
\end{align}
Observe that there appears at most three cases in terms of the relations between ball $B_{r}(x_{0})$ and the top and bottom boundaries $\Gamma^{\pm}_{2R_{0}}$ of the narrow region $\Omega_{2R_{0}}$, as the radius $r$ varies from small to large. To be specific,
\begin{itemize}
{\it
\item[\em Case 1.] $B_{r}(x_{0})$ has no intersection with $\Gamma^{\pm}_{2R_{0}}$ for small $r$;
\item[\em Case 2.] $B_{r}(x_{0})$ only intersects one of $\Gamma^{\pm}_{2R_{0}}$ for intermediate $r$;
\item[\em Case 3.] $B_{r}(x_{0})$ intersects both $\Gamma^{+}_{2R_{0}}$ and $\Gamma^{-}_{2R_{0}}$ for large $r$.}
\end{itemize}
It is worth pointing out that the second case will not occur if the narrow region is symmetric and $x_{0}$ is the midpoint of the height $\delta_{0}$. The first case implies that $B_{r}(x_{0})$ is contained in the thin gap $\Omega_{2R_{0}}$, which leads to the following interior mean oscillation decay estimate. For $u\in W^{1,p}(\Omega_{2R_{0}})$ and $B_{r}(x_{0})\subset\Omega_{2R_{0}}$, denote
\begin{align*}
\phi(x_{0},r):=\left(\dashint_{B_{r}(x_{0})}|\nabla u-(\nabla u)_{B_{r}(x_{0})}|^{p}\right)^{\frac{1}{p}}.
\end{align*}
Then we have
\begin{lemma}[Lemma 2.1 and Corollary 2.2 in \cite{DYZ2023}]\label{Lem001}
Assume that $u\in W^{1,p}(\Omega_{2R_{0}})$ is a solution of problem \eqref{problem006}. Then there exist two constants $C_{0}=C_{0}(d,p)>1$ and $0<\alpha=\alpha(d,p)<1$ such that for any $B_{r}(x_{0})\subset\Omega_{2R_{0}}$, $\rho\in(0,r]$,
\begin{align}\label{DE001}
\phi(x_{0},\rho)\leq C_{0}\Big(\frac{\rho}{r}\Big)^{\alpha}\phi(x_{0},r).
\end{align}
This implies that for any $B_{r}(x_{0})\subset\Omega_{2R_{0}}$, $\mu\in(0,\mu_{1})$, $\mu_{1}=\min\{(2C_{0})^{-\frac{1}{\alpha}},1\}$, and $0\leq i_{0}<j$,
\begin{align*}
\sum^{j}_{k=i_{0}}\phi(x_{0},\mu^{k}r)\leq2\phi(x_{0},\mu^{i_{0}}r).
\end{align*}

\end{lemma}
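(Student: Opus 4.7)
The estimate \eqref{DE001} is an interior Campanato-type decay for $p$-harmonic functions, since $B_r(x_0)\subset\Omega_{2R_0}$ means $u$ solves the homogeneous $p$-Laplace equation on all of $B_r(x_0)$ with no boundary condition to worry about. My plan is to establish \eqref{DE001} first and then to obtain the geometric-series corollary by a straightforward iteration.

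For \eqref{DE001}, I would rescale to a unit ball. The function $\tilde u(y):=r^{-1}u(x_0+ry)$ is $p$-harmonic on $B_1$ (the equation is invariant under this parabolic scaling for the $p$-Laplacian), and $\phi_{\tilde u}(0,s)=\phi(x_0,rs)$ for $s\in(0,1]$. By the classical interior $C^{1,\alpha}$ theory for $p$-harmonic functions (Uhlenbeck, Tolksdorf, DiBenedetto, Lewis), there is $\alpha=\alpha(d,p)\in(0,1)$ such that $\nabla\tilde u$ is Hölder continuous on $B_{1/2}$ with a quantitative estimate in terms of $\bigl(\dashint_{B_1}|\nabla\tilde u|^p\bigr)^{1/p}$. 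Combined with Caccioppoli's inequality and Poincaré, this yields the naive decay $\phi(x_0,\rho)\le C(\rho/r)^\alpha\bigl(\dashint_{B_r(x_0)}|\nabla u|^p\bigr)^{1/p}$.

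Passing from the $L^p$ norm on the right-hand side to the excess quantity $\phi(x_0,r)$ is the heart of the matter. Unlike the $p=2$ case, one cannot simply subtract the affine correction $(\nabla u)_{B_r(x_0)}\cdot(x-x_0)$, since this does not preserve $p$-harmonicity. The cleanest route is a compactness/contradiction argument in the style of De Giorgi and Campanato: if \eqref{DE001} failed with a fixed small $\rho/r$, one produces a sequence of $p$-harmonic functions $w_n$ on $B_1$ with $\phi_{w_n}(0,1)=1$ but $\phi_{w_n}(0,\rho/r)\to\infty\cdot(\rho/r)^\alpha$; interior $C^{1,\alpha}$ compactness extracts a $p$-harmonic limit $w$ on $B_1$ with normalized excess at scale $1$, whose $C^{1,\alpha}$ regularity forces $\phi_w(0,\rho/r)\lesssim(\rho/r)^\alpha$, giving the contradiction.

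Once \eqref{DE001} is in hand, Corollary-style iteration is automatic. Applying \eqref{DE001} with radii $\mu^k r$ and $\mu^{k-1}r$ yields $\phi(x_0,\mu^k r)\le C_0\mu^\alpha\phi(x_0,\mu^{k-1}r)$; the threshold $\mu_1=\min\{(2C_0)^{-1/\alpha},1\}$ is chosen exactly so that $C_0\mu^\alpha\le\tfrac12$, whence $\phi(x_0,\mu^k r)\le 2^{-(k-i_0)}\phi(x_0,\mu^{i_0}r)$ for every $k\ge i_0$, and the sum is bounded by $2\phi(x_0,\mu^{i_0}r)$ as a geometric series. The main technical obstacle is really the first part — the compactness argument producing the excess decay — which is where the nonlinearity of the $p$-Laplacian requires some care; the rescaling and iteration are then routine.
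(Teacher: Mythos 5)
Note first that the paper does not prove this lemma; it is stated as a citation to [DYZ2023] (Lemma 2.1 and Corollary 2.2), and the remark that follows points to Theorem 5 of \cite{DM1993}, Theorem 3.3 of \cite{DM2010}, and Lemma 5.1 of \cite{L1991} for the interior excess-decay estimate \eqref{DE001}. So there is no internal proof to compare your sketch against. Your iteration from \eqref{DE001} to the summed bound is correct: with $\mu<\mu_1$ one has $C_0\mu^\alpha\le\tfrac12$, so applying \eqref{DE001} at consecutive scales gives $\phi(x_0,\mu^k r)\le2^{-(k-i_0)}\phi(x_0,\mu^{i_0}r)$ for $k\ge i_0$, and the geometric series sums to $2\phi(x_0,\mu^{i_0}r)$.

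The gap is in your proof of \eqref{DE001} itself. Your compactness/contradiction argument does not close as written, precisely because of the obstruction you yourself flag. The normalization $\phi_{w_n}(0,1)=1$ leaves $|(\nabla w_n)_{B_1}|$ completely free, and when this sequence is unbounded there is no nondegenerate $p$-harmonic limit to extract (subtracting the mean gradient is forbidden, as you note, since it destroys $p$-harmonicity), so the contradiction never materializes. That unbounded regime is exactly where the substance lies: one must show that when $|(\nabla u)_{B_r}|\gg\phi(x_0,r)$ the equation is uniformly elliptic near the large slope and the excess decays according to the linearized constant-coefficient theory --- a $p$-harmonic or $A$-harmonic approximation argument, not a soft compactness one. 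Conversely, in the complementary regime $|(\nabla u)_{B_r}|\le M\phi(x_0,r)$, the interior $C^{1,\alpha}$ estimate already gives $\phi(x_0,\rho)\le C\rho^\alpha\big(\dashint_{B_r(x_0)}|\nabla u|^p\big)^{1/p}\le C(M)\rho^\alpha\phi(x_0,r)$ directly, with no compactness needed. So compactness is unnecessary where it works and insufficient where it is needed; the dichotomy on the size of the mean gradient and the linearization step in the large-slope case are the heart of the matter, and your sketch leaves them unaddressed. The references the paper points to establish \eqref{DE001} by direct intrinsic-scaling/De Giorgi-type arguments on $\nabla u$ rather than by compactness, which is a genuinely different route from what you propose.
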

\begin{remark}
We here would like to point out that the interior mean oscillation estimates in \eqref{DE001} under different exponents can be, respectively, seen in Theorem 5 of \cite{DM1993}, Theorem 3.3 of \cite{DM2010} and Lemma 5.1 of \cite{L1991}.
\end{remark}
From Lemma \ref{Lem001}, we see that there is no restriction on the value of the length parameter $R_{0}$ of the narrow region $\Omega_{2R_{0}}$ in terms of the establishment of interior mean oscillation estimate \eqref{DE001}. However, when $B_{r}(x_{0})$ intersects at least one of $\Gamma^{\pm}_{2R_{0}}$, we will show that it needs to restrict the value of $R_{0}$ to be small enough so that the transformed equations also preserve the similar mean oscillation properties to $p$-Laplace equation.

\subsection{Mean oscillation decay estimates under intermediate $r$}
In the second case when $B_{r}(x_{0})$ only intersects one of $\Gamma^{\pm}_{2R_{0}}$, assume without loss of generality that $B_{r}(x_{0})\cap\Gamma_{2R_{0}}^{-}\neq\emptyset$ and $B_{r}(x_{0})\cap\Gamma_{2R_{0}}^{+}=\emptyset$. Then we can find a point $\hat{x}_{0}=(\hat{x}_{0}',-\varepsilon/2+h_{2}(\hat{x}_{0}'))\in\Gamma_{2R_{0}}^{-}$ satisfying that $\mathrm{dist}(x_{0},\Gamma^{-}_{2R_{0}})=|x_{0}-\hat{x}_{0}|$. Similarly, there exists some point $\tilde{x}_{0}=(\tilde{x}_{0}',\varepsilon/2+h_{1}(\tilde{x}_{0}'))\in\Gamma_{2R_{0}}^{+}$ such that $\mathrm{dist}(\hat{x}_{0},\Gamma^{+}_{2R_{0}})=|\hat{x}_{0}-\tilde{x}_{0}|.$

{\bf Claim 1.} If $0<R_{0}\leq R_{0,1}$, then we have $B_{r}(\hat{x}_{0})\cap\Gamma_{2R_{0}}^{+}=\emptyset$ for any $r\in(0,\frac{1}{4}\delta_{0}]$, where $R_{0,1}$ and $\delta_{0}$ are, respectively, defined by \eqref{C09} and \eqref{DELTA02}.
\begin{proof}[Proof of Claim 1.]
For $0<s\leq c_{0}\delta_{0}^{1/m}$ with $c_{0}$ given by \eqref{ZMQC960}, using conditions ({\bf{H1}}) and ({\bf{H2}}), we obtain that for $|x'-x_{0}'|\leq s$, there exist two points $x'_{\theta_{1}}$ and $x'_{\theta_{2}}$ between $x'$ and $x'_{0}$ such that
\begin{align*}
|\delta(x')-\delta_{0}|\leq&|h_{1}(x')-h_{1}(x_{0}')|+|h_{2}(x')-h_{2}(x_{0}')|\notag\\
\leq&(|\nabla_{x'}h_{1}(x'_{\theta_{1}})|+|\nabla_{x'}h_{2}(x'_{\theta_{2}})|)|x'-x_{0}'|\notag\\
\leq&\kappa_{3}s(|x'_{\theta_{1}}|^{m-1}+|x'_{\theta_{2}}|^{m-1})\notag\\
\leq&2^{m-1}\kappa_{3}s(s^{m-1}+|x_{0}'|^{m-1})\leq\frac{\delta_{0}}{2},
\end{align*}
which implies that
\begin{align}\label{QWN001}
\frac{1}{2}\delta_{0}\leq\delta(x')\leq\frac{3}{2}\delta_{0},\quad\mathrm{in}\;\overline{B'_{s}(x_{0}')}.
\end{align}

Observe that the straight line $l_{x_{0},\hat{x}_{0}}$ passing through points $x_{0}$ and $\hat{x}_{0}$ can be represented as follows: for any $x\in l_{x_{0},\hat{x}_{0}}$,
\begin{align*}
\frac{x_{i}-\hat{x}_{0,i}}{\partial_{x_{i}}h_{2}(\hat{x}_{0}')}=\frac{x_{d}+\varepsilon/2-h_{2}(\hat{x}_{0}')}{-1},\quad i=1,...,d-1.
\end{align*}
Then we have
\begin{align}\label{Z09}
|x_{0}-\hat{x}_{0}|=&\sqrt{|x_{0}'-\hat{x}_{0}'|^{2}+|x_{0,d}+\varepsilon/2-h_{2}(\hat{x}'_{0})|^{2}}\notag\\
=&|x_{0,d}+\varepsilon/2-h_{2}(\hat{x}'_{0})|\sqrt{1+|\nabla_{x'}h_{2}(\hat{x}'_{0})|^{2}}.
\end{align}
Denote $\nu_{\hat{x}_{0}}:=\frac{(-\nabla_{x'}h_{2}(\hat{x}_{0}'),1)}{\sqrt{1+|\nabla_{x'}h_{2}(\hat{x}_{0}')|^{2}}}$. Let $\hat{\theta}_{0}$ be the angle between vectors $(0',1)$ and $\nu_{\hat{x}_{0}}$. Then we have $\cos\hat{\theta}_{0}=\nu_{\hat{x}_{0}}\cdot(0',1)=\frac{1}{\sqrt{1+|\nabla_{x'}h_{2}(\hat{x}_{0}')|^{2}}}$. Hence we deduce
\begin{align}\label{WDZ01}
|x_{0}'-\hat{x}_{0}'|=&|x_{0}-\hat{x}_{0}|\sin\hat{\theta}_{0}=|x_{0,d}+\varepsilon/2-h_{2}(\hat{x}'_{0})||\nabla_{x'}h_{2}(\hat{x}_{0}')|\notag\\
\leq&\delta(\hat{x}_{0}')|\nabla_{x'}h_{2}(\hat{x}_{0}')|\leq\kappa_{3}(\varepsilon+\kappa_{2}|\hat{x}_{0}'|^{m})|\hat{x}'_{0}|^{m-1}\notag\\
\leq&2^{2m-1}\kappa_{2}\kappa_{3}R_{0}^{2m-2}|\hat{x}_{0}'|.
\end{align}
Here we used $|\hat{x}_{0}'|\leq2R_{0}$ rather than $|\hat{x}_{0}'|\leq R_{0}$ in the last inequality, since there may appear $|\hat{x}_{0}'|>R_{0}$ when the interfacial boundary of $D_{2}$ is concave near $x_{0}$. From \eqref{WDZ01}, we see that if $0<R_{0}\leq2^{-\frac{m}{m-1}}(\kappa_{2}\kappa_{3})^{-\frac{1}{2m-2}}$,
\begin{align}\label{WDZ02}
|\hat{x}_{0}'|\leq&\frac{1}{1-2^{2m-1}\kappa_{2}\kappa_{3}R_{0}^{2m-2}}|x_{0}'|\leq2|x_{0}'|.
\end{align}
By the same arguments as in \eqref{Z09}--\eqref{WDZ01}, we have
\begin{align}\label{ZT002}
|\hat{x}_{0}-\tilde{x}_{0}|=&\sqrt{|\hat{x}_{0}'-\tilde{x}_{0}'|^{2}+|\varepsilon+h_{1}(\tilde{x}'_{0})-h_{2}(\hat{x}'_{0})|^{2}}\notag\\
=&|\varepsilon+h_{1}(\tilde{x}'_{0})-h_{2}(\hat{x}'_{0})|\sqrt{1+|\nabla_{x'}h_{1}(\tilde{x}'_{0})|^{2}},
\end{align}
and
\begin{align}\label{WQ08}
|\tilde{x}_{0}'-\hat{x}_{0}'|=&|\varepsilon+h_{1}(\tilde{x}_{0}')-h_{2}(\hat{x}'_{0})||\nabla_{x'}h_{1}(\tilde{x}_{0}')|\notag\\
\leq&\delta(\hat{x}_{0}')|\nabla_{x'}h_{1}(\tilde{x}_{0}')|\leq2^{2m-1}\kappa_{2}\kappa_{3}R_{0}^{2m-2}|\tilde{x}_{0}'|,
\end{align}
which, in combination with \eqref{WDZ02}, reads that if $0<R_{0}\leq2^{-\frac{m}{m-1}}(\kappa_{2}\kappa_{3})^{-\frac{1}{2m-2}}$,
\begin{align}\label{WDZ03}
|\tilde{x}_{0}'|\leq\frac{1}{1-2^{2m-1}\kappa_{2}\kappa_{3}R_{0}^{2m-2}}|\hat{x}_{0}'|\leq \frac{1}{(1-2^{2m-1}\kappa_{2}\kappa_{3}R_{0}^{2m-2})^{2}}|x_{0}'|\leq4|x_{0}'|.
\end{align}
Then we deduce
\begin{align*}
|x_{0}'-\tilde{x}_{0}'|\leq&|x_{0}'-\hat{x}_{0}'|+|\hat{x}_{0}'-\tilde{x}_{0}'|\leq2^{2m-1}\kappa_{2}\kappa_{3}R_{0}^{2m-2}(|\hat{x}_{0}'|+|\tilde{x}_{0}'|)\notag\\
\leq&2^{2m+2}\kappa_{2}\kappa_{3}R_{0}^{2m-2}|x_{0}'|\leq2^{2m+2}\kappa_{1}^{-1/m}\kappa_{2}\kappa_{3}R_{0}^{2m-2}\delta_{0}^{1/m}.
\end{align*}
Therefore, if $R_{0}$ further satisfies
\begin{align}\label{WE03}
0<R_{0}\leq2^{-\frac{m}{m-1}}(\kappa_{2}\kappa_{3})^{-\frac{1}{2m-2}}\min\{1,2^{-\frac{1}{m-1}}(\kappa_{1}^{1/m}c_{0})^{\frac{1}{2m-2}}\},
\end{align}
then we have
\begin{align*}
|x_{0}'-\tilde{x}_{0}'|\leq c_{0}\delta^{1/m}_{0},
\end{align*}
where $c_{0}$ is given by \eqref{ZMQC960}. This, together with \eqref{QWN001}, gives that
\begin{align}\label{WDZ05}
|\varepsilon+h_{1}(\tilde{x}'_{0})-h_{2}(\hat{x}'_{0})|\geq&\delta(\tilde{x}'_{0})-|h_{2}(\tilde{x}_{0}')-h_{2}(\hat{x}_{0}')|\geq\frac{1}{2}\delta_{0}-|\nabla_{x'}h_{2}(\xi'_{0})||\tilde{x}'_{0}-\hat{x}'_{0}|\notag\\
\geq&\frac{1}{2}\delta_{0}-\kappa_{3}|\xi'_{0}|^{m-1}|\tilde{x}'_{0}-\hat{x}'_{0}|,
\end{align}
where $\xi'_{0}$ is some point between $\tilde{x}'_{0}$ and $\hat{x}'_{0}$. If condition \eqref{WE03} holds, we deduce from \eqref{WDZ02}--\eqref{WDZ03} that
\begin{align*}
|\xi'_{0}|\leq&|\xi_{0}'-\hat{x}_{0}'|+|\hat{x}_{0}'|\leq|\tilde{x}_{0}'-\hat{x}_{0}'|+|\hat{x}_{0}'|\notag\\
\leq&2^{2m-1}\kappa_{2}\kappa_{3}R_{0}^{2m-2}|\tilde{x}'_{0}|+\frac{1}{1-2^{2m-1}\kappa_{2}\kappa_{3}R_{0}^{2m-2}}|x_{0}'|\notag\\
\leq&\left(\frac{2^{2m-1}\kappa_{2}\kappa_{3}R_{0}^{2m-2}}{(1-2^{2m-1}\kappa_{2}\kappa_{3}R_{0}^{2m-2})^{2}}+\frac{1}{1-2^{2m-1}\kappa_{2}\kappa_{3}R_{0}^{2m-2}}\right)|x_{0}'|\leq4|x_{0}'|,
\end{align*}
and
\begin{align*}
|\tilde{x}_{0}'-\hat{x}_{0}'|\leq&\frac{2^{2m-1}\kappa_{2}\kappa_{3}R_{0}^{2m-2}}{(1-2^{2m-1}\kappa_{2}\kappa_{3}R_{0}^{2m-2})^{2}}|x_{0}'|\leq2^{2m+1}\kappa_{2}\kappa_{3}R_{0}^{2m-2}|x_{0}'|.
\end{align*}
Substituting these two relational expressions into \eqref{WDZ05}, we obtain that if $R_{0}$ further satisfies $0<R_{0}\leq(\frac{\kappa_{1}}{2^{4m-1}\kappa_{2}\kappa_{3}^{2}})^{\frac{1}{2m-2}}$ besides \eqref{WE03},
\begin{align*}
|\varepsilon+h_{1}(\tilde{x}'_{0})-h_{2}(\hat{x}'_{0})|\geq&\frac{1}{2}\delta_{0}-\kappa_{3}|\xi'_{0}|^{m-1}|\tilde{x}'_{0}-\hat{x}'_{0}|\geq\frac{1}{2}\delta_{0}-2^{4m-1}\kappa_{2}\kappa_{3}^{2}R_{0}^{2m-2}|x'_{0}|^{m}\notag\\
\geq&\frac{1}{2}\delta_{0}-2^{4m-1}\kappa_{1}^{-1}\kappa_{2}\kappa_{3}^{2}R_{0}^{2m-2}\delta_{0}\geq\frac{1}{4}\delta_{0}.
\end{align*}
Then in light of \eqref{ZT002}, we have
\begin{align*}
\mathrm{dist}(\hat{x}_{0},\Gamma_{2R_{0}}^{+})=|\tilde{x}_{0}-\hat{x}_{0}|\geq\frac{1}{4}\delta_{0}.
\end{align*}
This implies that Claim 1 holds.

\end{proof}
We now introduce a local coordinate $y=(y',y_{d})$ in $B_{\frac{1}{4}\delta_{0}}(\hat{x}_{0})$ satisfying that $y(\hat{x}_{0})=0$, the $y_{d}$-axis direction is consistent with the normal vector $\nu_{\hat{x}_{0}}=\frac{(-\nabla_{x'}h_{2}(\hat{x}_{0}'),1)}{\sqrt{1+|\nabla_{x'}h_{2}(\hat{x}_{0}')|^{2}}}$, and $\Gamma_{2R_{0}}^{-}\cap B_{\frac{1}{4}\delta_{0}}(\hat{x}_{0})$ can be formulated by a $C^{m}$ function $y_{d}=\chi(y')$. We now use $z=\Lambda(y)=(y',y_{d}-\chi(y'))$ to straighten the boundary. With regard to the properties of $\chi$ and $\Lambda$, we have

{\bf Claim 2.} If $0<R_{0}\leq\min\{R_{0,1},R_{0,2}\}$, we obtain that for any $r\in(0,\frac{1}{4}\delta_{0}]$,
\begin{align}\label{PER01}
\begin{cases}
\chi(0')=0,\quad \nabla_{y'}\chi(0')=0,\quad\|\chi\|_{C^{m}}\leq C\|h_{2}\|_{C^{m}},\\
|\nabla\Lambda(y)-I_{d}|=|\nabla_{y'}\chi(y')|\leq \sqrt{2}\kappa_{4}(d-1)^{2}|y'|\leq\frac{1}{2},
\end{cases}
\end{align}
and, for any $r\in(0,\frac{1}{8}\delta_{0}]$,
\begin{align}\label{PER02}
\Omega_{r/2}(\hat{x}_{0})\subset\Lambda^{-1}(B_{r}^{+})\subset\Omega_{2r}(\hat{x}_{0}),\quad |B_{r/2}^{+}|\leq|\Omega_{r}(\hat{x}_{0})|\leq|B_{2r}^{+}|,
\end{align}
where $R_{0,i}$, $i=1,2$ are given by \eqref{C09}--\eqref{C12}, $I_{d}$ is the $d\times d$ identity matrix, $B_{r}^{+}$ is the upper half of ball $B_{r}$ under the coordinate $z$, and the constant $C$ depends only on $d,\kappa_{i}$, $i=1,2,3,4$, but not on $\varepsilon$.
\begin{proof}[Proof of Claim 2.]
Note that the tangential vectors at $\hat{x}_{0}$ on the surface $\Gamma_{2R_{0}}^{-}$ are given by
\begin{align*}
\tau_{i}=e_{i}+\partial_{x_{i}}h_{2}(\hat{x}_{0})e_{d},\quad i=1,...,d-1,
\end{align*}
where $\{e_{i}\}_{i=1}^{d}$ is the standard basis of $\mathbb{R}^{d}$. Introduce the projection operator as follows:
\begin{align*}
\mathrm{proj}_{a}b=\frac{(a,b)}{(a,a)}a,
\end{align*}
where $(a,b)$ represents the inner product of these two vectors and $(a,a)=|a|^{2}.$ In order to let these above tangential vectors be orthogonal to each other, we carry out the following Gram-Schmidt process:
\begin{align*}
\tilde{\tau}_{1}=&\tau_{1},\quad\hat{\tau}_{1}=\frac{\tilde{\tau}_{1}}{|\tilde{\tau}_{1}|},\\
\tilde{\tau}_{2}=&\tau_{2}-\mathrm{proj}_{\hat{\tau}_{1}}\tau_{2},\quad\hat{\tau}_{2}=\frac{\tilde{\tau}_{2}}{|\tilde{\tau}_{2}|},\\
\vdots\;&\\
\tilde{\tau}_{d-1}=&\tau_{d-1}-\sum^{d-2}_{i=1}\mathrm{proj}_{\hat{\tau}_{i}}\tau_{d-1},\quad\hat{\tau}_{d-1}=\frac{\tilde{\tau}_{d-1}}{|\tilde{\tau}_{d-1}|}.
\end{align*}
Denote $\mathcal{R}:=(\mathcal{R}_{ij})_{d\times d}=(\hat{\tau}_{1}^{T},...,\hat{\tau}_{d-1}^{T},\nu_{\hat{x}_{0}}^{T})$. The transform from $x$ to $y$ consists of translation and rotation, which can be written as $y=\mathcal{R}^{T}(x-\hat{x}_{0})$ and $x=\hat{x}_{0}+\mathcal{R}y$. Then we obtain that for $x\in \Gamma_{2R_{0}}^{-}\cap B_{\frac{1}{4}\delta_{0}}(\hat{x}_{0})$,
\begin{align}\label{CI09}
\chi(y')=\nu_{\hat{x}_{0}}\cdot(x-\hat{x}_{0})=\frac{h_{2}(x')-h_{2}(\hat{x}_{0}')-(x'-\hat{x}_{0}')\cdot\nabla_{x'}h_{2}(\hat{x}_{0}')}{\sqrt{1+|\nabla_{x'}h_{2}(\hat{x}_{0}')|^{2}}},
\end{align}
and
\begin{align*}
\partial_{y_{j}}\chi(y')=\sum^{d-1}_{i=1}\partial_{x_{i}}\chi(y')\frac{\partial x_{i}}{\partial y_{j}}.
\end{align*}
From condition ({\bf{H3}}), we see
\begin{align*}
|\partial_{x_{i}}\chi(y')|=\frac{|\partial_{x_{i}}h_{2}(x')-\partial_{x_{i}}h_{2}(\hat{x}_{0}')|}{\sqrt{1+|\nabla_{x'}h_{2}(\hat{x}_{0}')|^{2}}}\leq\kappa_{4}|x'-\hat{x}_{0}'|,
\end{align*}
which, together with the fact that $|\frac{\partial x_{i}}{\partial y_{j}}|=|\mathcal{R}_{ij}|\leq1$, shows that
\begin{align}\label{ZM03}
|\nabla_{y'}\chi(y')|\leq\sum^{d-1}_{j=1}|\partial_{y_{j}}\chi(y')|\leq(d-1)\sum^{d-1}_{i=1}|\partial_{x_{i}}\chi(y')|\leq\kappa_{4}(d-1)^{2}|x'-\hat{x}_{0}'|.
\end{align}
Since $|y|=|x-\hat{x}_{0}|$ on $\Gamma_{2R_{0}}^{-}\cap B_{\frac{1}{4}\delta_{0}}(\hat{x}_{0})$, then
\begin{align*}
|y'|^{2}=&|x'-\hat{x}_{0}'|^{2}+(h_{2}(x')-h_{2}(\hat{x}_{0}'))^{2}-|\chi(y')|^{2}\notag\\
=&|x'-\hat{x}_{0}'|^{2}+\frac{(h_{2}(x')-h_{2}(\hat{x}_{0}'))^{2}|\nabla_{x'}h_{2}(\hat{x}_{0}')|^{2}}{1+|\nabla_{x'}h_{2}(\hat{x}_{0}')|^{2}}\notag\\
&+\frac{2(x'-\hat{x}_{0}')\cdot\nabla_{x'}h_{2}(\hat{x}_{0}')(h_{2}(x')-h_{2}(\hat{x}_{0}'))}{1+|\nabla_{x'}h_{2}(\hat{x}_{0}')|^{2}}-\frac{((x'-\hat{x}_{0}')\cdot\nabla_{x'}h_{2}(\hat{x}_{0}'))^{2}}{1+|\nabla_{x'}h_{2}(\hat{x}_{0}')|^{2}}.
\end{align*}
It then follows from condition ({\bf{H2}}) that if $0<R_{0}\leq\min\{R_{0,1},2^{-\frac{2m+1}{2m-1}}\kappa_{3}^{-\frac{1}{m-1}}\}$, where $R_{0,1}$ is given by \eqref{C09}, we have
\begin{align}\label{ZM006}
|y'|^{2}\geq&|x'-\hat{x}_{0}'|^{2}-\frac{2|x'-\hat{x}_{0}'||\nabla_{x'}h_{2}(\hat{x}_{0}')||h_{2}(x')-h_{2}(\hat{x}_{0}')|}{1+|\nabla_{x'}h_{2}(\hat{x}_{0}')|^{2}}\notag\\
&-\frac{((x'-\hat{x}_{0}')\cdot\nabla_{x'}h_{2}(\hat{x}_{0}'))^{2}}{1+|\nabla_{x'}h_{2}(\hat{x}_{0}')|^{2}}\notag\\
\geq&(1-2^{2m}\kappa_{3}^{2}R_{0}^{2m-2})|x'-\hat{x}_{0}|^{2}\geq\frac{1}{2}|x'-\hat{x}_{0}|^{2}.
\end{align}
If we further require that $0<R_{0}\leq\min\{R_{0,1},R_{0,2}\}$, then we obtain from \eqref{ZM03} and \eqref{ZM006} that
\begin{align*}
|\nabla_{y'}\chi(y')|\leq\sqrt{2}\kappa_{4}(d-1)^{2}|y'|\leq\frac{\sqrt{2}}{4}\kappa_{4}(d-1)^{2}\delta_{0}\leq\frac{\sqrt{2}}{2}\kappa_{2}\kappa_{4}(d-1)^{2}R_{0}^{m}<\frac{1}{2}.
\end{align*}
Combining these above results, we obtain that \eqref{PER01} holds. We now proceed to prove \eqref{PER02}.

Observe from conditions ({\bf{H2}})--({\bf{H3}}), \eqref{CI09} and \eqref{ZM006} that for $|y'|<r$, $r\in(0,\frac{1}{8}\delta_{0}]$,
\begin{align}\label{DQ90}
|\chi(y')|\leq\kappa_{4}|x'-\hat{x}'_{0}|^{2}\leq2\kappa_{4}r^{2}\leq\frac{1}{4}\kappa_{4}\delta_{0}r<\frac{1}{2}\kappa_{2}\kappa_{4}R_{0}^{m}r.
\end{align}
This shows that for any $z\in B_{r}^{+}$ with $r\in(0,\frac{1}{8}\delta_{0}]$, if $0<R_{0}\leq\min\{R_{0,1},R_{0,2}\}$,
\begin{align*}
|y|^{2}=&|y'|^{2}+|\chi(y')+z_{d}|^{2}=|z|^{2}+(\chi(y'))^{2}+2z_{d}\chi(y')\notag\\
<&(1+4^{-1}\kappa_{2}^{2}\kappa_{4}^{2}R_{0}^{2m}+\kappa_{2}\kappa_{4}R_{0}^{m})r^{2}<4r^{2},
\end{align*}
which implies that $z\in\Lambda(\Omega_{2r}(\hat{x}_{0}))$ and thus $\Lambda^{-1}(B_{r}^{+})\subset\Omega_{2r}(\hat{x}_{0}).$

On the other hand, for $y\in\Omega_{r/2}(\hat{x}_{0})$, $r\in(0,\frac{1}{8}\delta_{0}]$, it follows from \eqref{DQ90} that
\begin{align*}
|z|^{2}=&|y'|^{2}+|y_{d}-\chi(y')|^{2}=|y|^{2}+|\chi(y')|^{2}-2y_{d}\chi(y')\notag\\
\leq&\frac{1}{4}(1+\kappa_{2}^{2}\kappa_{4}^{2}R_{0}^{2m}+2\kappa_{2}\kappa_{4}R_{0}^{m})r^{2}<r^{2},
\end{align*}
which reads that $z\in B_{r}^{+}$ and then $\Omega_{r/2}(\hat{x}_{0})\subset\Lambda^{-1}(B_{r}^{+})$. Moreover, since $\det(\nabla\Lambda)=1$ and $\Omega_{r/2}(\hat{x}_{0})\subset\Lambda^{-1}(B_{r}^{+})\subset\Omega_{2r}(\hat{x}_{0})$, we have $|B_{r/2}^{+}|\leq|\Omega_{r}(\hat{x}_{0})|\leq|B_{2r}^{+}|$ for $r\in(0,\frac{1}{8}\delta_{0}]$. The proof of Claim 2 is complete.

\end{proof}
Remark that by precisely calculating the values of length parameters $R_{0,1}$ and $R_{0,2}$, we obtain the required properties after the transforms including translation, rotation and flattening. We now study the transformed equation and establish the corresponding mean oscillation decay estimates near $\hat{x}_{0}$. Note that $p$-Laplace equation is invariant with respect to translation and rotation of the coordinate. Set $u_{1}(z):=u(\Lambda^{-1}(z))$. Then $u_{1}$ is the solution of the following problem
\begin{align}\label{FE01}
\begin{cases}
-\mathrm{div}_{z}(|A^{T}\nabla_{z}u_{1}|^{p-2}AA^{T}\nabla_{z}u_{1})=0,&\mathrm{in}\;B^{+}_{\frac{1}{8}\delta_{0}},\\
(|A^{T}\nabla_{z}u_{1}|^{p-2}AA^{T}\nabla_{z}u_{1})_{d}=0,&\mathrm{on}\;B_{\frac{1}{8}\delta_{0}}\cap\partial\mathbb{R}^{d}_{+},
\end{cases}
\end{align}
where $A:=A(z):=(a_{ij}(z)):=\nabla\Lambda(\Lambda^{-1}(z))$. Using the conormal boundary condition, we implement the even extension of $u_{1}$, $a_{dd}$ and $a_{ij}$, $i,j=1,...,d-1$ and the odd extension of $a_{id}$ and $a_{di}$, $i=1,...,d-1$ with respect to $z_{d}=0$, respectively. For simplicity, the extended function and coefficient matrix are still represented by $u_{1}$ and $A$. Therefore, $u_{1}$ solves
\begin{align}\label{COM001}
-\mathrm{div}_{z}(\boldsymbol{\mathcal{A}}(z,\nabla_{z}u_{1}))=0,\quad\mathrm{in}\;B_{\frac{1}{8}\delta_{0}},
\end{align}
where $\boldsymbol{\mathcal{A}}$ is a nonlinear operator given by $\boldsymbol{\mathcal{A}}(z,\xi)=|A^{T}\xi|^{p-2}AA^{T}\xi$ with $z\in B_{\frac{1}{8}\delta_{0}}$ and $\xi\in\mathbb{R}^{d}$. Since $A|_{z=0}=I_{d}$, then $\boldsymbol{\mathcal{A}}(0,\xi)=|\xi|^{p-2}\xi.$ Similar to Lemma 2.3 in \cite{DYZ2023}, we have
\begin{lemma}\label{LEM030}
Let $p>1$. If $0<R_{0}\leq\min\{R_{0,1},R_{0,2}\}$, then for any $z\in B_{\frac{1}{8}\delta_{0}}$ and $\xi\in\mathbb{R}^{d}$,
\begin{align*}
|\boldsymbol{\mathcal{A}}(z,\xi)-\boldsymbol{\mathcal{A}}(0,\xi)|\leq\mathcal{M}|z'||\xi|^{p-1},
\end{align*}
where $\mathcal{M}:=\mathcal{M}(d,p,\kappa_{4})$ is given by
\begin{align}\label{INEC01}
\mathcal{M}(d,p,\kappa_{4})=&\sqrt{2}\kappa_{4}(d-1)^{2}\left(\Big(d+\frac{1}{4}\Big)^{\frac{p-1}{2}}+\bar{c}_{p-2}\Big(d+\frac{5}{4}\Big)^{\frac{p-2}{2}}\right)
\end{align}
with $\bar{c}_{p-2}$ defined by \eqref{UP01}.
\end{lemma}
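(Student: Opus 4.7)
\textbf{Proof plan for Lemma \ref{LEM030}.} The goal is an $|z'|$-Lipschitz estimate for $\boldsymbol{\mathcal{A}}(z,\xi)=|A^T\xi|^{p-2}AA^T\xi$ at $z=0$. Since $A(0)=I_d$ we have $\boldsymbol{\mathcal{A}}(0,\xi)=F(\xi):=|\xi|^{p-2}\xi$, so it suffices to compare $AF(A^T\xi)$ with $F(\xi)$. The natural decomposition is
\begin{equation*}
\boldsymbol{\mathcal{A}}(z,\xi)-\boldsymbol{\mathcal{A}}(0,\xi)=(A-I_d)F(A^T\xi)+\bigl[F(A^T\xi)-F(\xi)\bigr],
\end{equation*}
and I will estimate the two pieces separately, pulling out $|A-I_d|$ as the $|z'|$-small factor in each.

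First I would record the explicit form $A=\begin{pmatrix}I_{d-1}&0\\-\nabla_{y'}\chi(z')&1\end{pmatrix}$ coming from $\Lambda(y)=(y',y_d-\chi(y'))$ together with $z'=y'$. From \eqref{PER01} (which is exactly where the restriction $R_0\le\min\{R_{0,1},R_{0,2}\}$ is used) we obtain the two crucial quantitative facts
\begin{equation*}
|A-I_d|=|\nabla_{y'}\chi(z')|\le \sqrt{2}\kappa_4(d-1)^2|z'|,\qquad |A|_F^{\,2}=(d-1)+|\nabla_{y'}\chi|^2+1\le d+\tfrac14,
\end{equation*}
where I use the Frobenius norm. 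The first gives the required $|z'|$ factor and the second controls $|A^T\xi|\le(d+1/4)^{1/2}|\xi|$.

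For the first piece, $|(A-I_d)F(A^T\xi)|\le|A-I_d|\,|A^T\xi|^{p-1}\le|A-I_d|\,(d+1/4)^{(p-1)/2}|\xi|^{p-1}$, producing the first summand in $\mathcal{M}$. For the second piece, I will apply the standard vector inequality
\begin{equation*}
\bigl||\eta_1|^{p-2}\eta_1-|\eta_2|^{p-2}\eta_2\bigr|\le \bar c_{p-2}\bigl(|\eta_1|^2+|\eta_2|^2\bigr)^{(p-2)/2}|\eta_1-\eta_2|
\end{equation*}
(this is the inequality encoded by the constant $\bar c_{p-2}$ in \eqref{UP01}) with $\eta_1=A^T\xi$, $\eta_2=\xi$. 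Then $|\eta_1-\eta_2|\le|A-I_d||\xi|$, and
\begin{equation*}
|A^T\xi|^2+|\xi|^2\le(|A|_F^{\,2}+1)|\xi|^2\le\bigl(d+\tfrac54\bigr)|\xi|^2,
\end{equation*}
which delivers the second summand in $\mathcal{M}$. Adding the two bounds and using $|A-I_d|\le\sqrt{2}\kappa_4(d-1)^2|z'|$ yields exactly the constant $\mathcal{M}(d,p,\kappa_4)$ in \eqref{INEC01}.

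The only technical subtlety is the sign of the exponent $(p-2)/2$ in the vector inequality: for $p\ge2$ it is nonnegative and the estimate is immediate via the mean value theorem applied to $F$, whereas for $1<p<2$ one has to treat the singularity at $\eta_1=\eta_2=0$ and justify the bound by the usual integration-along-segments argument that defines $\bar c_{p-2}$. This is where I expect the only real work to lie; everything else is algebraic bookkeeping under the quantitative geometric bounds \eqref{PER01}, which is precisely why the hypotheses $R_0\le R_{0,1}$ (so that the flattening is defined on $B_{\delta_0/4}(\hat x_0)$) and $R_0\le R_{0,2}$ (so that $|\nabla_{y'}\chi|\le1/2$) enter.
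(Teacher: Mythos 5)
Your argument is correct and follows essentially the same route as the paper: the identical decomposition $(A-I_d)F(A^T\xi)+[F(A^T\xi)-F(\xi)]$, with the first term controlled by $|A-I_d|\,|A^T\xi|^{p-1}$ and the second by the vector inequality \eqref{WME01} (from Hamburger and Giaquinta--Modica), together with the quantitative bounds $|A-I_d|\le\sqrt{2}\kappa_4(d-1)^2|z'|$ and $|A|\le(d+1/4)^{1/2}$ from \eqref{PER01}. Note that you correctly use $\bar c_{p-2}$ as in the statement \eqref{INEC01}, whereas the paper's proof text writes $\bar c_p$ — a typo in the paper, since \eqref{WME01} with $\sigma=p-2$ is what is being invoked.
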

\begin{proof}
By Lemma 2.1 in \cite{H1992} and Lemma 2.2 in \cite{GM1986}, we see that for any $\xi_{1},\xi_{2}\in\mathbb{R}^{d}$ and $\sigma>-1$,
\begin{align}\label{WME01}
\underline{c}_{\sigma}\leq\frac{||\xi_{1}|^{\sigma}\xi_{1}-|\xi_{2}|^{\sigma}\xi_{2}|}{|\xi_{1}-\xi_{2}|(|\xi_{1}|^{2}+|\xi_{2}|^{2})^{\frac{\sigma}{2}}}\leq \bar{c}_{\sigma},
\end{align}
where $\underline{c}_{\sigma}$ and $\bar{c}_{\sigma}$ are, respectively, defined by
\begin{align}\label{UP01}
\underline{c}_{\sigma}=
\begin{cases}
1+\sigma,&-1<\sigma\leq0,\\
5^{-(1+\frac{\sigma}{2})},&\sigma>0,
\end{cases}\quad
\bar{c}_{\sigma}=
\begin{cases}
\max\{2,10^{\frac{1}{2}|\sigma|}\},&-1<\sigma<0,\\
(1+\sigma)2^{\frac{\sigma}{2}},&\sigma\geq0.
\end{cases}
\end{align}
It then follows from \eqref{PER01} and \eqref{WME01} that
\begin{align*}
|\boldsymbol{\mathcal{A}}(z,\xi)-\boldsymbol{\mathcal{A}}(0,\xi)|\leq&||A^{T}\xi|^{p-2}(A-I_{d})A^{T}\xi|+||A^{T}\xi|^{p-2}A^{T}\xi-|\xi|^{p-2}\xi|\notag\\
\leq&|A-I_{d}||A^{T}\xi|^{p-1}+\bar{c}_{p}(|A^{T}\xi|^{2}+|\xi|^{2})^{\frac{p-2}{2}}|A^{T}\xi-\xi|\notag\\
\leq&\left(|A^{T}|^{p-1}+\bar{c}_{p}(|A^{T}|^{2}+1)^{\frac{p-2}{2}}\right)|A^{T}-I_{d}||\xi|^{p-1}\notag\\
\leq&\sqrt{2}\kappa_{4}(d-1)^{2}\left(\Big(d+\frac{1}{4}\Big)^{\frac{p-1}{2}}+\bar{c}_{p}\Big(d+\frac{5}{4}\Big)^{\frac{p-2}{2}}\right)|z'||\xi|^{p-1},
\end{align*}
where we also used the fact that $|A^{T}-I_{d}|=|A-I_{d}|$ and $|A|=\sqrt{d+|\nabla_{y'}\chi(y')|^{2}}$.

\end{proof}

For $r\in(0,\frac{1}{8}\delta_{0}]$, we define $v_{1}\in W^{1,p}(B_{r})$ as the unique solution of the following homogeneous Dirichlet problem
\begin{align}\label{COM002}
\begin{cases}
-\mathrm{div}_{z}(\boldsymbol{\mathcal{A}}(0,\nabla_{z}v_{1}))=0,&\mathrm{in}\;B_{r},\\
v_{1}=u_{1},&\mathrm{on}\;\partial B_{r},
\end{cases}
\end{align}
where $u_{1}$ satisfies equation \eqref{COM001}. In order to establish mean oscillation estimate for the solution $u_{1}$ to the transformed equation \eqref{COM001}, we need the following comparison estimate.
\begin{lemma}\label{Lem005}
For $p>1$, let $u_{1}$ and $v_{1}$ be the solutions of \eqref{COM001} and \eqref{COM002}, respectively. If $0<R_{0}\leq\min\{R_{0,1},R_{0,2}\}$,  then for any $r\in(0,\frac{1}{8}\delta_{0}]$,
\begin{align}\label{M09}
\dashint_{B_{r}}|\nabla_{z}u_{1}-\nabla_{z}v_{1}|^{p}\leq\mathfrak{S}_{p}r^{\min\{2,p\}}\dashint_{B_{r}}|\nabla_{z}u_{1}|^{p},
\end{align}
where $\mathfrak{S}_{p}$ is given by
\begin{align*}
\mathfrak{S}_{p}=
\begin{cases}
\frac{\bar{c}^{p}_{\frac{p-2}{2}}2^{\frac{p(2-p)}{2}}\mathcal{M}^{p}}{\underline{c}^{p}_{\frac{p-2}{2}}(p-1)^{p}}\left[\left(\frac{2^{p}(p-1)^{p-1}}{p^{p}}\right)^{\frac{2-p}{2}}+1\right],&1<p<2,\\
4\max\{1,2^{p-3}\}\mathcal{M}^{2},&p\geq2
\end{cases}
\end{align*}
with $\mathcal{M},\,\bar{c}_{\frac{p-2}{2}},\,\underline{c}_{\frac{p-2}{2}}$ defined by \eqref{INEC01}--\eqref{WME01}.

\end{lemma}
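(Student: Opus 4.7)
The plan is to take $w := u_{1} - v_{1} \in W^{1,p}_{0}(B_{r})$ (an admissible test function since $u_{1}=v_{1}$ on $\partial B_{r}$) in the weak formulations of \eqref{COM001} and \eqref{COM002} and subtract. This yields the comparison identity
\begin{align*}
\int_{B_{r}}\bigl(\boldsymbol{\mathcal{A}}(0,\nabla_{z} u_{1})-\boldsymbol{\mathcal{A}}(0,\nabla_{z} v_{1})\bigr)\cdot\nabla_{z} w\,dz = \int_{B_{r}}\bigl(\boldsymbol{\mathcal{A}}(0,\nabla_{z} u_{1})-\boldsymbol{\mathcal{A}}(z,\nabla_{z} u_{1})\bigr)\cdot\nabla_{z} w\,dz.
\end{align*}
Lemma \ref{LEM030} immediately controls the right-hand side by $\mathcal{M} r\int_{B_{r}}|\nabla_{z} u_{1}|^{p-1}|\nabla_{z} w|\,dz$. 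For the left-hand side I would invoke the sharp monotonicity of the $p$-Laplace operator, which through \eqref{WME01} applied to the map $\xi\mapsto|\xi|^{(p-2)/2}\xi$ provides, for all $p>1$ and some universal $c>0$,
\begin{align*}
\bigl(|\xi|^{p-2}\xi-|\eta|^{p-2}\eta\bigr)\cdot(\xi-\eta) \geq c\,\underline{c}_{(p-2)/2}^{\,2}\bigl(|\xi|^{2}+|\eta|^{2}\bigr)^{(p-2)/2}|\xi-\eta|^{2}.
\end{align*}

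In the superquadratic range $p\geq 2$, I would split $|\nabla_{z} u_{1}|^{p-1}|\nabla_{z} w|=|\nabla_{z} u_{1}|^{p/2}\cdot|\nabla_{z} u_{1}|^{(p-2)/2}|\nabla_{z} w|$ and apply Cauchy--Schwarz, together with the pointwise inequality $|\nabla_{z} u_{1}|^{p-2}\leq(|\nabla_{z} u_{1}|^{2}+|\nabla_{z} v_{1}|^{2})^{(p-2)/2}$, which uses the non-negativity of $(p-2)/2$. Writing $W:=\int_{B_{r}}(|\nabla_{z} u_{1}|^{2}+|\nabla_{z} v_{1}|^{2})^{(p-2)/2}|\nabla_{z} w|^{2}$ and $Y:=\int_{B_{r}}|\nabla_{z} u_{1}|^{p}$, the comparison identity reduces to a quadratic inequality $c_{1}W\leq \mathcal{M}r\,Y^{1/2}W^{1/2}$, yielding $W\leq c_{2}\mathcal{M}^{2}r^{2}Y$. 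The pointwise estimate $|\nabla_{z} w|^{p}\leq 2^{(p-2)/2}(|\nabla_{z} u_{1}|^{2}+|\nabla_{z} v_{1}|^{2})^{(p-2)/2}|\nabla_{z} w|^{2}$ (again since $p\geq 2$) then converts $W$ into $\int_{B_{r}}|\nabla_{z} w|^{p}$ and delivers \eqref{M09} with the exponent $r^{2}$.

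In the subquadratic range $1<p<2$ the weight $(|\nabla_{z} u_{1}|^{2}+|\nabla_{z} v_{1}|^{2})^{(p-2)/2}$ carries a negative power, so the pointwise domination above fails and one must reshuffle via H\"older. I would write
\begin{align*}
\int_{B_{r}}|\nabla_{z} w|^{p} = \int_{B_{r}}\Bigl(\tfrac{|\nabla_{z} w|^{2}}{(|\nabla_{z} u_{1}|^{2}+|\nabla_{z} v_{1}|^{2})^{(2-p)/2}}\Bigr)^{p/2}\bigl(|\nabla_{z} u_{1}|^{2}+|\nabla_{z} v_{1}|^{2}\bigr)^{p(2-p)/4}dz
\end{align*}
and apply H\"older's inequality with conjugate exponents $2/p$ and $2/(2-p)$. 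The first factor is the weighted energy, controlled by pairing the monotonicity lower bound with the unweighted H\"older estimate $\int_{B_{r}}|\nabla_{z} u_{1}|^{p-1}|\nabla_{z} w|\leq Y^{(p-1)/p}X^{1/p}$ (where $X:=\int_{B_{r}}|\nabla_{z} w|^{p}$). The second factor is bounded by $Y+\int_{B_{r}}|\nabla_{z} v_{1}|^{p}$, and the auxiliary estimate $\int_{B_{r}}|\nabla_{z} v_{1}|^{p}\leq\int_{B_{r}}|\nabla_{z} u_{1}|^{p}$ (obtained by testing \eqref{COM002} against $v_{1}-u_{1}$ and applying H\"older) reduces this to $CY$. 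Solving the resulting algebraic inequality for $X$ isolates the $r^{p}$ rate.

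The main obstacle will be the subquadratic case: the degeneracy of the monotonicity inequality forces the weighted-H\"older decomposition above, and one must simultaneously track the auxiliary mass estimate on $v_{1}$. Careful bookkeeping of the constants $\underline{c}_{(p-2)/2}$ and $\bar{c}_{(p-2)/2}$ from \eqref{WME01} through each application of H\"older and Young, along with the factor $2^{(p-2)/2}$ appearing in the pointwise conversion for $p\geq 2$, then produces the explicit piecewise form of $\mathfrak{S}_{p}$ stated in the lemma.
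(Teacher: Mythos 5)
Your proposal follows essentially the same route as the paper: test both equations with $w=u_{1}-v_{1}$, apply Lemma \ref{LEM030} to the resulting perturbation term and the monotonicity of Lemma \ref{Lem06} to the principal part, then in the subquadratic case convert the weighted energy bound into the $L^{p}$ comparison via H\"older with exponents $2/p$ and $2/(2-p)$ combined with the mass bound $\int_{B_r}|\nabla v_{1}|^{p}\leq\int_{B_r}|\nabla u_{1}|^{p}$, and in the superquadratic case via a pointwise power comparison. The only differences are in elementary-inequality bookkeeping: you use Cauchy--Schwarz followed by an algebraic reduction where the paper absorbs via Young's inequality, and the paper filters the subquadratic weighted estimate through the auxiliary map $V(\xi)=|\xi|^{(p-2)/2}\xi$ and the two-sided bound \eqref{WME01}, so the explicit constants you would obtain differ slightly from the stated $\mathfrak{S}_{p}$ but the structure is identical.
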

\begin{remark}
Although the proof of Lemma \ref{Lem005} has been contained in \cite{DM2010,DM2011} for the general degenerate elliptic equations, we also present its proof with an explicit constant $\mathfrak{S}_{p}$ in \eqref{M09}.
\end{remark}
Before proving Lemma \ref{Lem005}, we recall a classical inequality (see Chapter 12 in \cite{L2019}) as follows.
\begin{lemma}\label{Lem06}
Let $d\geq2$. For any $\xi_{1},\xi_{2}\in\mathbb{R}^{d}$,

$(i)$ if $1<p<2$,
\begin{align}\label{DP01}
\langle |\xi_{1}|^{p-2}\xi_{1}-|\xi_{2}|^{p-2}\xi_{2},\xi_{1}-\xi_{2}\rangle\geq\frac{p-1}{2^{\frac{2-p}{2}}}(|\xi_{1}|^{2}+|\xi_{2}|^{2})^{\frac{p-2}{2}}|\xi_{1}-\xi_{2}|^{2};
\end{align}

$(ii)$ if $p\geq2$,
\begin{align}\label{DP02}
\langle |\xi_{1}|^{p-2}\xi_{1}-|\xi_{2}|^{p-2}\xi_{2},\xi_{1}-\xi_{2}\rangle\geq\frac{1}{2}(|\xi_{1}|^{p-2}+|\xi_{2}|^{p-2})|\xi_{1}-\xi_{2}|^{2}.
\end{align}
\end{lemma}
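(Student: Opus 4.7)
The plan is to handle the two cases by different routes: (ii) by a direct algebraic identity that reveals the defect as a manifestly non-negative product, and (i) by the classical line-integral representation combined with a Cauchy--Schwarz worst-case estimate. The two regimes genuinely require different ideas, because the approach that is cleanest for $p\geq 2$ integrates the Hessian of $|\xi|^p/p$ and cannot reach the sharp factor $(|\xi_1|^2+|\xi_2|^2)^{(p-2)/2}$ in the sub-quadratic range.

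For (ii), I would expand the inner product to
$$\ip{|\xi_1|^{p-2}\xi_1-|\xi_2|^{p-2}\xi_2,\,\xi_1-\xi_2} = |\xi_1|^p + |\xi_2|^p - (|\xi_1|^{p-2}+|\xi_2|^{p-2})\,\xi_1\cdot\xi_2,$$
and expand the claimed right-hand side $\tfrac{1}{2}(|\xi_1|^{p-2}+|\xi_2|^{p-2})|\xi_1-\xi_2|^2$ similarly. The cross-terms in $\xi_1\cdot\xi_2$ cancel exactly, and the remaining purely radial terms collapse to
$$\tfrac{1}{2}\bigl(|\xi_1|^{p-2}-|\xi_2|^{p-2}\bigr)\bigl(|\xi_1|^2-|\xi_2|^2\bigr).$$
Since $p\geq 2$, the function $t\mapsto t^{p-2}$ is non-decreasing on $[0,\infty)$, so the two factors share the same sign and the difference is non-negative, which is exactly \eqref{DP02}.

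For (i), I would use the fundamental theorem of calculus along the segment $\xi_t := \xi_2+t(\xi_1-\xi_2)$:
$$|\xi_1|^{p-2}\xi_1 - |\xi_2|^{p-2}\xi_2 = \int_0^1 \Bigl(|\xi_t|^{p-2}(\xi_1-\xi_2) + (p-2)|\xi_t|^{p-4}\bigl(\xi_t\cdot(\xi_1-\xi_2)\bigr)\xi_t\Bigr)\,dt.$$
Pairing with $\xi_1-\xi_2$ yields an integrand of the form $|\xi_t|^{p-2}|\xi_1-\xi_2|^2 + (p-2)|\xi_t|^{p-4}|\xi_t\cdot(\xi_1-\xi_2)|^2$. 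Because $p-2<0$, the second term is negative, and the Cauchy--Schwarz bound $|\xi_t\cdot(\xi_1-\xi_2)|\leq |\xi_t||\xi_1-\xi_2|$ gives the worst case, leaving the lower bound $(p-1)|\xi_t|^{p-2}|\xi_1-\xi_2|^2$. The parallelogram-type estimate $|\xi_t|^2\leq 2(|\xi_1|^2+|\xi_2|^2)$ combined with $(p-2)/2<0$ produces $|\xi_t|^{p-2}\geq 2^{(p-2)/2}(|\xi_1|^2+|\xi_2|^2)^{(p-2)/2}$; integrating over $t\in[0,1]$ delivers exactly the constant $(p-1)/2^{(2-p)/2}$ in \eqref{DP01}.

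The only point that needs care is the possible vanishing of $\xi_t$ for some $t\in(0,1)$, which can occur only when $\xi_1,\xi_2$ are anti-parallel; there $|\xi_t|^{p-2}$ blows up like $|2t-1|^{p-2}$, and the hypothesis $p>1$ makes this singularity integrable on $[0,1]$, so every step is legitimate. If desired, one can instead regularize by working with $(|\xi|^2+\delta^2)^{(p-2)/2}\xi$, carry out the identical computation with all integrands bounded, and pass to the limit $\delta\to 0^+$ by dominated convergence; this is the main (and only real) technical obstacle, and it is easily dispatched.
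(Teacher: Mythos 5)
Your proposal is correct and follows essentially the same route as the paper: for $p\geq2$ the same algebraic identity whose defect is $\tfrac12(|\xi_1|^{p-2}-|\xi_2|^{p-2})(|\xi_1|^2-|\xi_2|^2)\geq0$, and for $1<p<2$ the same line-integral representation with Cauchy--Schwarz giving the factor $p-1$, followed by the bound $|\xi_2+t(\xi_1-\xi_2)|^2\leq(|\xi_1|+|\xi_2|)^2\leq2(|\xi_1|^2+|\xi_2|^2)$ and the sign of $p-2$ to reach the constant $2^{(p-2)/2}$. Your extra remark on the integrable singularity when the segment passes through the origin (or the $\delta$-regularization) is a legitimate refinement of a point the paper leaves implicit, but it does not change the argument.
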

For readers' convenience, the proof of Lemma \ref{Lem06} is left in the appendix.
\begin{proof}[Proof of Lemma \ref{Lem005}]
We divide the proof into two cases including $1<p<2$ and $p\geq2$.

{\bf Step 1.} Consider the case when $1<p<2$. Due to the fact that both $u_{1}$ and $v_{1}$ are the solutions, it follows from \eqref{WME01}, Lemmas \ref{LEM030} and \ref{Lem06}, H\"{o}lder's inequality and Young's inequality that
\begin{align*}
&\int_{B_{r}}(|\nabla_{z}u_{1}|^{2}+|\nabla_{z}v_{1}|^{2})^{\frac{p-2}{2}}|\nabla_{z}u_{1}-\nabla_{z}v_{1}|^{2}\notag\\
&\leq\frac{2^{\frac{2-p}{2}}}{p-1}\int_{B_{r}}\langle\boldsymbol{\mathcal{A}}(0,\nabla_{z}u_{1})-\boldsymbol{\mathcal{A}}(0,\nabla_{z}v_{1}),\nabla_{z}u_{1}-\nabla_{z}v_{1}\rangle\notag\\
&=\frac{2^{\frac{2-p}{2}}}{p-1}\int_{B_{r}}\langle\boldsymbol{\mathcal{A}}(0,\nabla_{z}u_{1})-\boldsymbol{\mathcal{A}}(z,\nabla_{z}u_{1}),\nabla_{z}u_{1}-\nabla_{z}v_{1}\rangle\notag\\
&\leq\frac{2^{\frac{2-p}{2}}\mathcal{M}r}{p-1}\int_{B_{r}}|\nabla_{z}u_{1}|^{p-1}|\nabla_{z}u_{1}-\nabla_{z}v_{1}|\notag\\
&\leq\frac{1}{2}\int_{B_{r}}|\nabla_{z}u_{1}|^{p-2}|\nabla_{z}u_{1}-\nabla_{z}v_{1}|^{2}+\frac{2^{2-p}\mathcal{M}^{2}r^{2}}{2(p-1)^{2}}\int_{B_{r}}|\nabla_{z}u_{1}|^{p},
\end{align*}
which reads that
\begin{align*}
\int_{B_{r}}(|\nabla_{z}u_{1}|^{2}+|\nabla_{z}v_{1}|^{2})^{\frac{p-2}{2}}|\nabla_{z}u_{1}-\nabla_{z}v_{1}|^{2}\leq\frac{2^{2-p}\mathcal{M}^{2}r^{2}}{(p-1)^{2}}\int_{B_{r}}|\nabla_{z}u_{1}|^{p}.
\end{align*}
For notational convenience, denote $V(\xi):=|\xi|^{\frac{p-2}{2}}\xi$ for any $\xi\in\mathbb{R}^{d}$. It then follows from \eqref{WME01} that
\begin{align}\label{EQ011}
\dashint_{B_{r}}|V(\nabla_{z}u_{1})-V(\nabla_{z}v_{1})|^{2}\leq \frac{\bar{c}^{2}_{\frac{p-2}{2}}2^{2-p}\mathcal{M}^{2}r^{2}}{(p-1)^{2}}\dashint_{B_{r}}|\nabla_{z}u_{1}|^{p},
\end{align}
where $\bar{c}_{\frac{p-2}{2}}$ and $\mathcal{M}$ are given by \eqref{INEC01}--\eqref{WME01}. Observe from \eqref{WME01} that
\begin{align}\label{EQ012}
&|\nabla_{z}u_{1}-\nabla_{z}v_{1}|^{p}\notag\\
&=\big[(|\nabla_{z}u_{1}|^{2}+|\nabla_{z}v_{1}|^{2})^{\frac{p-2}{2}}|\nabla_{z}u_{1}-\nabla_{z}v_{1}|^{2}\big]^{\frac{p}{2}}(|\nabla_{z}u_{1}|^{2}+|\nabla_{z}v_{1}|^{2})^{\frac{p(2-p)}{4}}\notag\\
&=\frac{1}{\overline{c}^{p}_{\frac{p-2}{2}}}|V(\nabla_{z}u_{1})-V(\nabla_{z}v_{1})|^{p}(|\nabla_{z}u_{1}|^{2}+|\nabla_{z}v_{1}|^{2})^{\frac{p(2-p)}{4}}.
\end{align}
Picking test function $v_{1}-u_{1}$ for equation \eqref{COM002}, it follows from Young's inequality that
\begin{align*}
\int_{B_{r}}|\nabla_{z}v_{1}|^{p}=&\int_{B_{r}}\boldsymbol{\mathcal{A}}(0,\nabla_{z}v_{1})\nabla_{z}v_{1}=\int_{B_{r}}\boldsymbol{\mathcal{A}}(0,\nabla_{z}v_{1})\nabla_{z}u_{1}\notag\\
\leq&\frac{1}{2}\int_{B_{r}}|\nabla_{z}v_{1}|^{p}+\frac{2^{p-1}(p-1)^{p-1}}{p^{p}}\int_{B_{r}}|\nabla_{z}u_{1}|^{p}.
\end{align*}
This, together with \eqref{EQ011}--\eqref{EQ012} and H\"{o}lder's inequality, yields that
\begin{align*}
&\dashint_{B_{r}}|\nabla_{z}u_{1}-\nabla_{z}v_{1}|^{p}\notag\\
&\leq\frac{1}{\underline{c}^{p}_{\frac{p-2}{2}}}\left(\dashint_{B_{r}}|V(\nabla_{z}u_{1})-V(\nabla_{z}v_{1})|^{p}\right)^{\frac{p}{2}}\left(\dashint_{B_{r}}(|\nabla_{z}u_{1}|^{2}+|\nabla_{z}v_{1}|^{2})^{\frac{p}{2}}\right)^{\frac{2-p}{2}}\notag\\
&\leq \frac{\bar{c}^{p}_{\frac{p-2}{2}}2^{\frac{p(2-p)}{2}}\mathcal{M}^{p}r^{p}}{\underline{c}^{p}_{\frac{p-2}{2}}(p-1)^{p}}\left(\dashint_{B_{r}}|\nabla_{z}u_{1}|^{p}\right)^{\frac{p}{2}}\left(\dashint_{B_{r}}(|\nabla_{z}u_{1}|^{2}+|\nabla_{z}v_{1}|^{2})^{\frac{p}{2}}\right)^{\frac{2-p}{2}}\notag\\
&\leq \frac{\bar{c}^{p}_{\frac{p-2}{2}}2^{\frac{p(2-p)}{2}}\mathcal{M}^{p}r^{p}}{\underline{c}^{p}_{\frac{p-2}{2}}(p-1)^{p}}\left(\dashint_{B_{r}}|\nabla_{z}u_{1}|^{p}\right)^{\frac{p}{2}}\left(\left(\dashint_{B_{r}}|\nabla_{z}u_{1}|^{p}\right)^{\frac{2-p}{2}}+\left(\dashint_{B_{r}}|\nabla_{z}v_{1}|^{p}\right)^{\frac{2-p}{2}}\right)\notag\\
&\leq \frac{\bar{c}^{p}_{\frac{p-2}{2}}2^{\frac{p(2-p)}{2}}\mathcal{M}^{p}}{\underline{c}^{p}_{\frac{p-2}{2}}(p-1)^{p}}\left[\left(\frac{2^{p}(p-1)^{p-1}}{p^{p}}\right)^{\frac{2-p}{2}}+1\right]r^{p}\dashint_{B_{r}}|\nabla_{z}u_{1}|^{p}.
\end{align*}

{\bf Step 2.} Consider the case of $p\geq2$. Analogously as above, since both $u_{1}$ and $v_{1}$ are the solutions, we have from H\"{o}lder's inequality, Young's inequality, Lemmas \ref{LEM030} and \ref{Lem06} that
\begin{align*}
&\int_{B_{r}}(|\nabla_{z}u_{1}|^{p-2}+|\nabla_{z}v_{1}|^{p-2})|\nabla_{z}u_{1}-\nabla_{z}v_{1}|^{2}\notag\\
&\leq2\int_{B_{r}}\langle \boldsymbol{\mathcal{A}}(0,\nabla_{z}u_{1})-\boldsymbol{\mathcal{A}}(0,\nabla_{z}v_{1}),\nabla_{z}u_{1}-\nabla_{z}v_{1}\rangle\notag\\
&=2\int_{B_{r}}\langle \boldsymbol{\mathcal{A}}(0,\nabla_{z}u_{1})-\boldsymbol{\mathcal{A}}(z,\nabla_{z}u_{1}),\nabla_{z}u_{1}-\nabla_{z}v_{1}\rangle\notag\\
&\leq2\mathcal{M}r\int_{B_{r}}|\nabla_{z}u_{1}|^{p-1}|\nabla_{z}u_{1}-\nabla_{z}v_{1}|\notag\\
&\leq\frac{1}{2}\int_{B_{r}}|\nabla_{z}u_{1}|^{p-2}|\nabla_{z}u_{1}-\nabla_{z}v_{1}|^{2}+2\mathcal{M}^{2}r^{2}\int_{B_{r}}|\nabla_{z}u_{1}|^{p},
\end{align*}
which gives that
\begin{align*}
\int_{B_{r}}(|\nabla_{z}u_{1}|^{p-2}+|\nabla_{z}v_{1}|^{p-2})|\nabla_{z}u_{1}-\nabla_{z}v_{1}|^{2}\leq4\mathcal{M}^{2}r^{2}\int_{B_{r}}|\nabla_{z}u_{1}|^{p}.
\end{align*}
Then we have
\begin{align*}
\dashint_{B_{r}}|\nabla_{z}u_{1}-\nabla_{z}v_{1}|^{p}\leq&\max\{1,2^{p-3}\}\dashint_{B_{r}}(|\nabla_{z}u_{1}|^{p-2}+|\nabla_{z}v_{1}|^{p-2})|\nabla_{z}u_{1}-\nabla_{z}v_{1}|^{2}\notag\\
\leq&4\max\{1,2^{p-3}\}\mathcal{M}^{2}r^{2}\int_{B_{r}}|\nabla_{z}u_{1}|^{p}.
\end{align*}
The proof is complete.

\end{proof}

For $r\in(0,\frac{1}{8}\delta_{0}]$, denote
\begin{align}\label{PSI}
\psi(\hat{x}_{0},r)=\left(\dashint_{\Omega_{r}(\hat{x}_{0})}|\nabla_{y'}u-(\nabla_{y'}u)_{\Omega_{r}(\hat{x}_{0})}|^{p}+|\partial_{y_{d}}u|^{p}\right)^{\frac{1}{p}},
\end{align}
where $u\in W^{1,p}(\Omega_{2R_{0}})$ is the solution to \eqref{problem006}. Similar to Lemma 2.5 in \cite{DYZ2023}, by Lemmas \ref{Lem001} and \ref{Lem005}, we can obtain mean oscillation estimate for the solution $u$ near $\hat{x}_{0}$ as follows.
\begin{lemma}\label{Lem096}
Assume as above. Let $u\in W^{1,p}(\Omega_{2R_{0}})$ be the solution to \eqref{problem006}. If $0<R_{0}\leq\min\{R_{0,1},R_{0,2}\}$, then for any $r\in(0,\frac{1}{8}\delta_{0}]$ and $\mu\in(0,\frac{1}{4})$,
\begin{align*}
\psi(\hat{x}_{0},\mu r)\leq C_{1}\mu^{\alpha}\psi(\hat{x}_{0},r)+C_{2}\mu^{-\frac{d}{p}}r^{\theta}\left(\dashint_{\Omega_{r}(\hat{x}_{0})}|\nabla u|^{p}\right)^{\frac{1}{p}},
\end{align*}
where $\theta:=\theta_{p}=\min\{1,2/p\}$, $C_{1}=C_{1}(d,p)$, $C_{2}=C_{2}(d,p,\kappa_{4})$, $\alpha$ is given by Lemma \ref{Lem001} and $\psi$ is defined by \eqref{PSI}. Further, choose $\mu_{2}=\min\{(2C_{1})^{-\frac{1}{\alpha}},\frac{1}{4}\}$ such that for any $r\in(0,\frac{1}{8}\delta_{0}]$, $\mu\in(0,\mu_{2})$, and $0\leq i_{0}<j$,
\begin{align*}
&\sum^{j}_{k=i_{0}}\psi(\hat{x}_{0},\mu^{k}r)\notag\\
&\leq2\psi(\hat{x}_{0},\mu^{i_{0}}r)+C_{2}\mu^{-\frac{d}{p}}\sum^{j}_{k=i_{0}+1}\sum^{k-1}_{i=i_{0}}(C_{1}\mu^{\alpha})^{k-1-i}(\mu^{i}r)^{\theta}\left(\dashint_{\Omega_{\mu^{i}r}(\hat{x}_{0})}|\nabla u|^{p}\right)^{\frac{1}{p}}.
\end{align*}
\end{lemma}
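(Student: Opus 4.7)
The plan is to imitate the classical Campanato/mean-oscillation iteration, but now at the flattened boundary: compare $u_1$ (the transformed, symmetrically extended solution from \eqref{COM001}) on a ball $B_r$ with the solution $v_1$ of the frozen-coefficient equation \eqref{COM002}, then use Lemma \ref{Lem001} to propagate oscillation decay for $v_1$ and Lemma \ref{Lem005} to control the error $u_1-v_1$. The hypothesis $0<R_{0}\le\min\{R_{0,1},R_{0,2}\}$ is exactly what guarantees via Claims 1 and 2 that the straightening map $\Lambda$ is well defined on $B_{\delta_0/8}(\hat x_0)$, satisfies \eqref{PER01}--\eqref{PER02}, and that Lemmas \ref{LEM030} and \ref{Lem005} are available.

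\textbf{Step 1: Transfer to the half-ball.} Using the rotation $\mathcal R$ and the straightening $\Lambda$, I would set $\tilde u(y)=u(\hat x_0+\mathcal R y)$ and $u_1(z)=\tilde u(\Lambda^{-1}(z))$, so that $\partial_{z_d}u_1=\partial_{y_d}\tilde u$ while $\partial_{z_i}u_1=\partial_{y_i}\tilde u+\partial_{y_d}\tilde u\,\partial_{z_i}\chi$ for $i<d$. By \eqref{PER01}, $|\nabla\Lambda-I_d|\le\tfrac12$, so $|\nabla_z u_1|$ and $|\nabla_y\tilde u|=|\nabla u|$ are comparable with absolute constants, and \eqref{PER02} makes $|\Omega_r(\hat x_0)|$ comparable with $|B_r^+|$. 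The conormal boundary condition plus the even/odd extension turn $\psi(\hat x_0,r)$ into a quantity of the form
\begin{align*}
\Psi(r):=\Bigl(\dashint_{B_r}\bigl|\nabla_{z'}u_1-(\nabla_{z'}u_1)_{B_r}\bigr|^{p}+|\partial_{z_d}u_1|^{p}\Bigr)^{\frac{1}{p}},
\end{align*}
where the normal-derivative term is written without subtracting a mean precisely because $\partial_{z_d}u_1$ is odd in $z_d$, so $(\partial_{z_d}u_1)_{B_r}=0$. Up to a universal multiplicative constant coming from $\nabla\Lambda-I_d$ being small, $\Psi(r)\simeq\psi(\hat x_0,r)$ and the analogous equivalence holds after replacing $r$ by $\mu r$.

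\textbf{Step 2: Freeze the coefficients.} Let $v_1\in W^{1,p}(B_r)$ solve \eqref{COM002} with boundary data $u_1$. Since $\boldsymbol{\mathcal A}(0,\xi)=|\xi|^{p-2}\xi$ and the Dirichlet data $u_1$ is even in $z_d$, uniqueness gives that $v_1$ is even in $z_d$, so $\partial_{z_d}v_1$ is odd and $(\partial_{z_d}v_1)_{B_\rho}=0$ for every $\rho\le r$. Being a solution of the pure $p$-Laplace equation, $v_1$ qualifies for the interior estimate of Lemma \ref{Lem001}, which yields
\begin{align*}
\Bigl(\dashint_{B_{\mu r}}|\nabla_z v_1-(\nabla_z v_1)_{B_{\mu r}}|^{p}\Bigr)^{\frac{1}{p}}\le C_0\mu^{\alpha}\Bigl(\dashint_{B_r}|\nabla_z v_1-(\nabla_z v_1)_{B_r}|^{p}\Bigr)^{\frac{1}{p}}.
\end{align*}
Splitting the gradient into $(\nabla_{z'},\partial_{z_d})$ and exploiting the vanishing of $(\partial_{z_d}v_1)_{B_\rho}$, this decay estimate translates directly into the $\Psi$-type decay for $v_1$.

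\textbf{Step 3: Triangle inequality and comparison error.} Combining triangle inequality with the quasi-minimizing property of the mean,
\begin{align*}
\Psi_{v_1}(\mu r)&\le C_0\mu^{\alpha}\Psi_{v_1}(r),\\
\Psi(\mu r)&\le 2\Psi_{v_1}(\mu r)+C\Bigl(\dashint_{B_{\mu r}}|\nabla_z u_1-\nabla_z v_1|^{p}\Bigr)^{\frac{1}{p}},\\
\Psi_{v_1}(r)&\le\Psi(r)+C\Bigl(\dashint_{B_r}|\nabla_z u_1-\nabla_z v_1|^{p}\Bigr)^{\frac{1}{p}}.
\end{align*}
Lemma \ref{Lem005} then bounds the comparison term by $\mathfrak S_p^{1/p}r^{\theta}(\dashint_{B_r}|\nabla_z u_1|^{p})^{1/p}$ with $\theta=\min\{1,2/p\}$, and to move the comparison term from $B_{\mu r}$ to $B_r$ one pays the volume factor $\mu^{-d/p}$. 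Converting $\dashint_{B_r}|\nabla_z u_1|^{p}$ back to $\dashint_{\Omega_r(\hat x_0)}|\nabla u|^{p}$ using the Jacobian of $\Lambda$ and the inclusions in \eqref{PER02}, I arrive at the one-step estimate
\begin{align*}
\psi(\hat x_0,\mu r)\le C_1\mu^{\alpha}\psi(\hat x_0,r)+C_2\mu^{-\frac{d}{p}}r^{\theta}\Bigl(\dashint_{\Omega_r(\hat x_0)}|\nabla u|^{p}\Bigr)^{\frac{1}{p}},
\end{align*}
with $C_1=C_1(d,p)$ and $C_2=C_2(d,p,\kappa_4)$, which is the first claim.

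\textbf{Step 4: Iteration.} Choose $\mu_2=\min\{(2C_1)^{-1/\alpha},1/4\}$ so that $C_1\mu^{\alpha}\le 1/2$ whenever $\mu\in(0,\mu_2)$. Iterating the one-step inequality yields
\begin{align*}
\psi(\hat x_0,\mu^{k}r)\le (C_1\mu^{\alpha})^{k-i_0}\psi(\hat x_0,\mu^{i_0}r)+C_2\mu^{-\frac{d}{p}}\sum_{i=i_0}^{k-1}(C_1\mu^{\alpha})^{k-1-i}(\mu^{i}r)^{\theta}\Bigl(\dashint_{\Omega_{\mu^{i}r}(\hat x_0)}|\nabla u|^{p}\Bigr)^{\frac{1}{p}},
\end{align*}
and summing over $k=i_0,\dots,j$ together with $\sum_{k\ge i_0}(C_1\mu^{\alpha})^{k-i_0}\le 2$ gives the second claim.

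The main obstacle I anticipate is \textbf{Step 1}: carefully bookkeeping the change of variables so that (a) the normal derivative $\partial_{y_d}u$ survives cleanly under $\Lambda$ as $\partial_{z_d}u_1$ (this is needed for the oddness to apply to the $|\partial_{z_d}u_1|^{p}$ term), and (b) the constants from $|\nabla\Lambda-I_d|\le\tfrac12$, the Jacobian, and the set inclusions $\Omega_{r/2}(\hat x_0)\subset\Lambda^{-1}(B_r^+)\subset\Omega_{2r}(\hat x_0)$ are absorbed without introducing $r$-dependence. Everything else (Step 2 through Step 4) is a standard Campanato-type excess-decay iteration once Claims 1--2 and Lemmas \ref{Lem001}, \ref{LEM030}, \ref{Lem005} are in place.
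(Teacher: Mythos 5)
Your proposal follows essentially the same route as the paper: compare the transformed, extended solution $u_{1}$ on a ball $B_{r}$ with the frozen-coefficient $p$-harmonic replacement $v_{1}$ from \eqref{COM002}, invoke Lemma \ref{Lem001} for the Campanato-type excess decay of $v_{1}$, bound the error $u_{1}-v_{1}$ via Lemma \ref{Lem005}, pass between the ball and the physical domain $\Omega_{r}(\hat x_{0})$ using \eqref{PER01}--\eqref{PER02}, exploit oddness of $\partial_{z_{d}}u_{1}$ to go from the full-ball estimate to the half-ball quantity \eqref{PSI}, and then iterate. The lemma dependencies and the iteration scheme in your Step 4 are exactly as in the paper.

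One point in Step 1 is stated too optimistically and is worth correcting. You claim ``up to a universal multiplicative constant coming from $\nabla\Lambda-I_{d}$ being small, $\Psi(r)\simeq\psi(\hat x_{0},r)$'' and you then hope to handle the change-of-variables ``without introducing $r$-dependence.'' In fact the change of variables is \emph{not} a pure multiplicative equivalence: since $\nabla_{z'}u_{1}=\nabla_{y'}u+\partial_{y_{d}}u\,\nabla_{y'}\chi$, translating $\psi$ into $\Psi$ produces an \emph{additive} error of the form $\bigl(\sqrt{2}\kappa_{4}(d-1)^{2}r\bigr)^{p}\dashint|\nabla u|^{p}$, as the paper's \eqref{Q96}--\eqref{Q98} make explicit, and this $r$-dependence is a genuine feature, not something to be absorbed into a constant. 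The argument still closes because this additive term is of order $r^{p}\dashint|\nabla u|^{p}$, which for $r\le1$ is dominated by the comparison error $r^{\min\{2,p\}}\dashint|\nabla u|^{p}$ from Lemma \ref{Lem005}, and both feed into the same $C_{2}\mu^{-d/p}r^{\theta}(\dashint_{\Omega_{r}(\hat x_{0})}|\nabla u|^{p})^{1/p}$ term. Relatedly, the factor-of-two mismatches between $B_{r}^{+}$ and $\Omega_{r}(\hat x_{0})$ in \eqref{PER02} mean the one-step estimate first comes out with radii $\mu r/2$ and $2r$; the relabeling $(\mu,r)\mapsto(\mu/4,2r)$ is what shrinks the admissible range to $\mu\in(0,\tfrac14)$. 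Once you make these two adjustments explicit, your Steps 2--4 go through as written.
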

\begin{proof}
Let $u_{1}\in W^{1,p}(B_{\frac{1}{8}\delta_{0}}^{+})$ be the solution to equation \eqref{FE01}. We first establish mean oscillation estimate for the solution $u_{1}$ as follows: for any $\mu\in(0,1)$ and $r\in(0,\frac{1}{8}\delta_{0}]$,
\begin{align}\label{DJ093}
&\left(\dashint_{B_{\mu r}^{+}}|\nabla_{z'}u_{1}-(\nabla_{z'}u_{1})_{B^{+}_{\mu r}}|^{p}+|\partial_{z_{d}}u_{1}|^{p}\right)^{\frac{1}{p}}\notag\\
&\leq C_{1}\mu^{\alpha}\left(\dashint_{B_{r}^{+}}|\nabla_{z'}u_{1}-(\nabla_{z'}u_{1})_{B^{+}_{r}}|^{p}+|\partial_{z_{d}}u_{1}|^{p}\right)^{\frac{1}{p}}+C_{2}\mu^{-\frac{d}{p}}r^{\theta}\left(\int_{B_{r}^{+}}|\nabla_{z}u_{1}|^{p}\right)^{\frac{1}{p}},
\end{align}
where $C_{1}=C_{1}(d,p)$, $C_{2}=C_{2}(d,p,\kappa_{4})$, $\theta=\min\{1,2/p\}$ and $\alpha$ is given by Lemma \ref{Lem001}.

Utilizing Lemma 6.13 in \cite{L1996}, the triangle inequality, Lemmas \ref{Lem001} and \ref{Lem005}, we have
\begin{align}\label{FEA90}
&\left(\dashint_{B_{\mu r}}|\nabla_{z}u_{1}-(\nabla_{z}u_{1})_{B_{\mu r}}|^{p}\right)^{\frac{1}{p}}\leq2\left(\dashint_{B_{\mu r}}|\nabla_{z}u_{1}-(\nabla_{z}v_{1})_{B_{\mu r}}|^{p}\right)^{\frac{1}{p}}\notag\\
&\leq2\left(\dashint_{B_{\mu r}}|\nabla_{z}v_{1}-(\nabla_{z}v_{1})_{B_{\mu r}}|^{p}\right)^{\frac{1}{p}}+2\left(\dashint_{B_{\mu r}}|\nabla_{z}u_{1}-\nabla_{z}v_{1}|^{p}\right)^{\frac{1}{p}}\notag\\
&\leq C\mu^{\alpha}\left(\dashint_{B_{r}}|\nabla_{z}v_{1}-(\nabla_{z}v_{1})_{B_{r}}|^{p}\right)^{\frac{1}{p}}+2\mu^{-\frac{d}{p}}\left(\dashint_{B_{r}}|\nabla_{z}u_{1}-\nabla_{z}v_{1}|^{p}\right)^{\frac{1}{p}}\notag\\
&\leq C\mu^{\alpha}\left(\dashint_{B_{r}}|\nabla_{z}u_{1}-(\nabla_{z}u_{1})_{B_{r}}|^{p}\right)^{\frac{1}{p}}+C\mu^{-\frac{d}{p}}\left(\dashint_{B_{r}}|\nabla_{z}u_{1}-\nabla_{z}v_{1}|^{p}\right)^{\frac{1}{p}}\notag\\
&\leq C_{1}\mu^{\alpha}\left(\dashint_{B_{r}}|\nabla_{z}u_{1}-(\nabla_{z}u_{1})_{B_{r}}|^{p}\right)^{\frac{1}{p}}+C_{2}\mu^{-\frac{d}{p}}r^{\theta}\left(\dashint_{B_{r}}|\nabla_{z}u_{1}|^{p}\right)^{\frac{1}{p}},
\end{align}
where $C_{1}=C_{1}(d,p)$, $C_{2}=C_{2}(d,p,\kappa_{4})$, and we also used the fact that
\begin{align*}
&\left(\dashint_{B_{r}}|\nabla_{z}v_{1}-(\nabla_{z}v_{1})_{B_{r}}|^{p}\right)^{\frac{1}{p}}\leq2\left(\dashint_{B_{r}}|\nabla_{z}v_{1}-(\nabla_{z}u_{1})_{B_{r}}|^{p}\right)^{\frac{1}{p}}\notag\\
&\leq2\left(\dashint_{B_{r}}|\nabla_{z}u_{1}-(\nabla_{z}u_{1})_{B_{r}}|^{p}\right)^{\frac{1}{p}}+2\left(\dashint_{B_{r}}|\nabla_{z}v_{1}-\nabla_{z}u_{1}|^{p}\right)^{\frac{1}{p}}.
\end{align*}
Note that $u_{1}$ is even in $z_{d}$ and thus $\partial_{z_{d}}u_{1}$ is odd in $z_{d}$. Then \eqref{DJ093} follows from \eqref{FEA90}.

Recall an elementary inequality that for any $a,b\in\mathbb{R}^{d}$ and $p>1$,
\begin{align}\label{ZMD90}
|a+b|^{p}\geq2^{1-p}|a|^{p}-|b|^{p}.
\end{align}
For $\mu\in(0,1)$ and $r\in(0,\frac{1}{16}\delta_{0}]$, it follows from \eqref{PER01}--\eqref{PER02} and \eqref{ZMD90} that
\begin{align}\label{Q96}
&\dashint_{B_{\mu r}^{+}}|\nabla_{z'}u_{1}-(\nabla_{z'}u_{1})_{B^{+}_{\mu r}}|^{p}+|\partial_{z_{d}}u|^{p}\notag\\
&=\dashint_{\Lambda^{-1}(B^{+}_{\mu r})}|\nabla_{y'}u+\partial_{y_{d}}u\nabla_{y'}\chi-(\nabla_{y'}u+\partial_{y_{d}}u\nabla_{y'}\chi)_{\Lambda^{-1}(B^{+}_{\mu r})}|^{p}+|\partial_{y_{d}}u|^{p}\notag\\
&\geq2^{1-p}\dashint_{\Omega_{\mu r/2}(\hat{x}_{0})}(|\nabla_{y'}u-(\nabla_{y'}u)_{\Omega_{\mu r/2}(\hat{x}_{0})}|^{p}+|\partial_{y_{d}}u|^{p})-2^{p}\dashint_{\Omega_{\mu r/2}(\hat{x}_{0})}|\partial_{y_{d}}u\nabla_{y'}\chi|^{p}\notag\\
&\geq2^{1-p}(\psi(\hat{x}_{0},\mu r/2))^{p}-(\sqrt{2}\kappa_{4}(d-1)^{2}\mu r)^{p}\dashint_{\Omega_{\mu r/2}(\hat{x}_{0})}|\nabla u|^{p},
\end{align}
where we also utilized the fact that
\begin{align*}
&\dashint_{\Omega_{\mu r/2}(\hat{x}_{0})}|\partial_{y_{d}}u\nabla_{y'}\chi-(\partial_{y_{d}}u\nabla_{y'}\chi)_{\Omega_{\mu r/2}(\hat{x}_{0})}|^{p}\notag\\
&\leq2^{p-1}\left(\dashint_{\Omega_{\mu r/2}(\hat{x}_{0})}|\partial_{y_{d}}u\nabla_{y'}\chi|^{p}+|(\partial_{y_{d}}u\nabla_{y'}\chi)_{\Omega_{\mu r/2}(\hat{x}_{0})}|^{p}\right)\notag\\
&\leq2^{p}\dashint_{\Omega_{\mu r/2}(\hat{x}_{0})}|\partial_{y_{d}}u\nabla_{y'}\chi|^{p}.
\end{align*}
By the same argument, we have
\begin{align}\label{Q98}
&\dashint_{B_{r}^{+}}|\nabla_{z'}u_{1}-(\nabla_{z'}u_{1})_{B^{+}_{r}}|^{p}+|\partial_{z_{d}}u|^{p}\notag\\
&\leq2^{p-1}(\psi(\hat{x}_{0},2r))^{p}+2^{3p-1}(\sqrt{2}\kappa_{4}(d-1)^{2}r)^{p}\dashint_{\Omega_{2r}(\hat{x}_{0})}|\nabla u|^{p}.
\end{align}
A consequence of \eqref{DJ093} and \eqref{Q96}--\eqref{Q98} shows that
\begin{align}\label{MME60}
\psi(\hat{x}_{0},\mu r/2)\leq C_{1}\mu^{\alpha}\psi(\hat{x}_{0},2r)+C_{2}\mu^{-\frac{d}{p}}r^{\theta}\left(\dashint_{\Omega_{2r}(\hat{x}_{0})}|\nabla u|^{p}\right)^{\frac{1}{p}},
\end{align}
where $C_{1}=C_{1}(d,p)$ and $C_{2}=C_{2}(d,p,\kappa_{4})$. Letting $\mu,r$ substitute for $\mu/4,2r$ in \eqref{MME60}, we obtain that for any $\mu\in(0,\frac{1}{4})$ and $r\in(0,\frac{1}{8}\delta_{0}]$,
\begin{align*}
\psi(\hat{x}_{0},\mu r)\leq C_{1}\mu^{\alpha}\psi(\hat{x}_{0},r)+C_{2}\mu^{-\frac{d}{p}}r^{\theta}\left(\dashint_{\Omega_{r}(\hat{x}_{0})}|\nabla u|^{p}\right)^{\frac{1}{p}}.
\end{align*}
Then for $r\in(0,\frac{1}{8}\delta_{0}]$, and $0\leq i_{0}<k$,
\begin{align*}
&\psi(\hat{x}_{0},\mu^{k}r)\notag\\
&\leq(C_{1}\mu^{\alpha})^{k-i_{0}}\psi(\hat{x}_{0},\mu^{i_{0}}r)+C_{2}\mu^{-\frac{d}{p}}\sum^{k-1}_{i=i_{0}}(C_{1}\mu^{\alpha})^{k-1-i}(\mu^{i}r)^{\theta}\left(\dashint_{\Omega_{\mu^{i}r}(\hat{x}_{0})}|\nabla u|^{p}\right)^{\frac{1}{p}},
\end{align*}
which leads to that for any $0<\mu<\min\{(2C_{1})^{-\frac{1}{\alpha}},\frac{1}{4}\}$ and $0\leq i_{0}<j$,
\begin{align*}
&\sum^{j}_{k=i_{0}}\psi(\hat{x}_{0},\mu^{k}r)\notag\\
&\leq\frac{\psi(\hat{x}_{0},\mu^{i_{0}}r)}{1-C_{1}\mu^{\alpha}}+C_{2}\mu^{-\frac{d}{p}}\sum^{j}_{k=i_{0}+1}\sum^{k-1}_{i=i_{0}}(C_{1}\mu^{\alpha})^{k-1-i}(\mu^{i}r)^{\theta}\left(\dashint_{\Omega_{\mu^{i}r}(\hat{x}_{0})}|\nabla u|^{p}\right)^{\frac{1}{p}}\notag\\
&\leq2\psi(\hat{x}_{0},\mu^{i_{0}}r)+C_{2}\mu^{-\frac{d}{p}}\sum^{j}_{k=i_{0}+1}\sum^{k-1}_{i=i_{0}}(C_{1}\mu^{\alpha})^{k-1-i}(\mu^{i}r)^{\theta}\left(\dashint_{\Omega_{\mu^{i}r}(\hat{x}_{0})}|\nabla u|^{p}\right)^{\frac{1}{p}}.
\end{align*}

\end{proof}

\subsection{Mean oscillation decay estimates under large $r$}

In this subsection, we aim to establish mean oscillation estimates for the solution to the transformed equation in the case when $B_{r}(x_{0})$ may potentially intersect both $\Gamma_{2R_{0}}^{+}$ and $\Gamma_{2R_{0}}^{-}$. Consider $r\in[\frac{1}{24}\delta_{0},\tilde{c}_{0}\delta_{0}^{1/m}]$, where $\tilde{c}_{0}$ is given by \eqref{C681}. Define a constant as follows:
\begin{align}\label{WE860}
\overline{R}_{0}:=\left(\frac{12}{13}\tilde{c}_{0}\right)^{\frac{1}{m-1}}\kappa_{2}^{-1/m}.
\end{align}
Then if $0<R_{0}\leq \overline{R}_{0}$, then $\frac{1}{24}\delta_{0}<\tilde{c}_{0}\delta_{0}^{1/m}$. Introduce a cylinder as follows: for $s,t>0$,
\begin{align}\label{CI90}
Q_{s,t}:=\{\tilde{z}=(\tilde{z}',\tilde{z}_{d})\in\mathbb{R}^{d}\,|\,|\tilde{z}'|<s,\,|\tilde{z}_{d}|<t\}.
\end{align}
Under the flattening transform $\tilde{z}=\tilde{\Lambda}(x)$ as follows:
\begin{align*}
\begin{cases}
\tilde{z}'=x'-x_{0}',\\
\tilde{z}_{d}=\delta_{0}\left(\frac{x_{d}-h_{2}(x')+\varepsilon/2}{\varepsilon+h_{1}(x')-h_{2}(x')}-\frac{1}{2}\right),
\end{cases}
\end{align*}
the narrow region $\Omega_{x_{0}',R_{0}}$ is mapped to be a cylinder $Q_{R_{0},\frac{1}{2}\delta_{0}}$ with its upper and lower boundaries written as
\begin{align*}
\widetilde{\Gamma}^{\pm}_{R_{0},\frac{1}{2}\delta_{0}}:=\Big\{\tilde{z}=(\tilde{z}',\tilde{z}_{d})\in\mathbb{R}^{d}\,|\,|\tilde{z}'|<R_{0},\,\tilde{z}_{d}=\pm\frac{1}{2}\delta_{0}\Big\}.
\end{align*}
Define $\tilde{u}_{1}(\tilde{z})=u(\tilde{\Lambda}^{-1}(\tilde{z}))$. Then $\tilde{u}_{1}$ satisfies
\begin{align*}
\begin{cases}
-\mathrm{div}_{\tilde{z}}(|\tilde{A}^{T}\nabla_{\tilde{z}}\tilde{u}_{1}|^{p-2}(\det(\tilde{A}))^{-1}\tilde{A}\tilde{A}^{T}\nabla_{\tilde{z}}\tilde{u}_{1})=0,&\mathrm{in}\;Q_{R_{0},\frac{1}{2}\delta_{0}},\\
(|\tilde{A}^{T}\nabla_{\tilde{z}}\tilde{u}_{1}|^{p-2}(\det(\tilde{A}))^{-1}\tilde{A}\tilde{A}^{T}\nabla_{\tilde{z}}\tilde{u}_{1})_{d}=0,&\mathrm{on}\;\widetilde{\Gamma}^{\pm}_{R_{0},\frac{1}{2}\delta_{0}},
\end{cases}
\end{align*}
where $\tilde{A}:=\tilde{A}(\tilde{z}):=(\tilde{a}_{ij}(\tilde{z})):=\nabla\tilde{\Lambda}(\tilde{\Lambda}^{-1}(\tilde{z}))$. For $\tilde{z}\in Q_{R_{0},\frac{1}{2}\delta_{0}}$, let $x=\tilde{\Lambda}^{-1}(\tilde{z})$. Recall that $\delta_{0}=\delta(x_{0}')$ and $\delta=\delta(x')$. Then the elements of $\tilde{A}$ satisfy that $\tilde{a}_{ii}=1$ for $i=1,...,d-1$, $\tilde{a}_{ij}=0$ for $i\neq j,\,i=1,...,d-1,\,j=1,2,...,d,$ $\det(\tilde{A})=\tilde{a}_{dd}=\delta_{0}\delta^{-1}$, and
\begin{align*}
\tilde{a}_{dj}=-\delta_{0}\delta^{-1}\partial_{x_{j}}h_{2}(x')-\delta^{-1}\Big(\tilde{z}_{d}+\frac{\delta_{0}}{2}\Big)\partial_{x_{j}}\delta,\quad\mathrm{for}\;j=1,...,d-1.
\end{align*}
Therefore, for any $\tilde{z}\in Q_{R_{0},\frac{1}{2}\delta_{0}}$ with $|\tilde{z}'|=|x'-x_{0}'|\leq r$, $r\in[\frac{1}{24}\delta_{0},\tilde{c}_{0}\delta_{0}^{1/m}]$, $0<\tilde{c}_{0}\leq c_{0}$, it follows from conditions ({\bf{H1}})--({\bf{H2}}) and \eqref{QWN001} that
\begin{align}\label{FQ013}
\sum^{d-1}_{j=1}|\tilde{a}_{dj}|^{2}\leq&\sum^{d-1}_{j=1}(2|\partial_{x_{j}}h_{1}|+4|\partial_{x_{j}}h_{2}|)^{2}\leq8|\nabla_{x'}h_{1}|^{2}+32|\nabla_{x'}h_{2}|^{2}\notag\\
\leq&40\kappa_{3}^{2}|x'|^{2m-2}\leq40\kappa_{1}^{2/m-2}\kappa_{3}^{2}\delta^{2-2/m}\leq90\kappa_{1}^{2/m-2}\kappa_{3}^{2}\delta_{0}^{2-2/m}\notag\\
\leq&51840\kappa_{1}^{2/m-2}\kappa_{3}^{2}r^{2}\delta_{0}^{-2/m}<\frac{1}{4},
\end{align}
and, there exist two points $\xi_{i}'$, $i=1,2$ between $x_{0}'$ and $x'$ such that
\begin{align}
|\tilde{a}_{dd}-1|=&|\delta-\delta_{0}|\delta^{-1}\leq\kappa_{3}\delta^{-1}(|\nabla_{x'}h_{1}(\xi'_{1})|+|\nabla_{x'}h_{2}(\xi_{2}')|)|x'-x_{0}'|\notag\\
\leq&2^{m-2}\kappa_{3}r\delta^{-1}(|\xi'_{1}-x'_{0}|^{m-1}+|\xi'_{2}-x'_{0}|^{m-1}+2|x_{0}'|^{m-1})\notag\\
\leq&2^{m}\kappa_{3}r\delta_{0}^{-1}(r^{m-1}+|x_{0}'|^{m-1})\leq2^{m}\kappa_{3}(\tilde{c}_{0}^{m-1}+\kappa_{1}^{1/m-1})r\delta_{0}^{-1/m}\notag\\
\leq&2^{m+1}\kappa_{1}^{1/m-1}\kappa_{3}r\delta_{0}^{-1/m}<\frac{1}{2},\notag\\
|(\det(\tilde{A}))^{-1}-1|=&|\tilde{a}_{dd}^{-1}-1|=|\delta-\delta_{0}|\delta_{0}^{-1}\leq2^{m}\kappa_{1}^{1/m-1}\kappa_{3}r\delta_{0}^{-1/m}<\frac{1}{4},\label{FQ015}
\end{align}
and thus
\begin{align}\label{EQM896}
|\tilde{A}(\tilde{z})-I_{d}|=&\sqrt{\sum^{d-1}_{j=1}\tilde{a}_{dj}^{2}+(\tilde{a}_{dd}-1)^{2}}\notag\\
\leq&\sqrt{51840+4^{m+1}}\kappa_{1}^{1/m-1}\kappa_{3}r\delta_{0}^{-1/m}\leq\frac{1}{2}.
\end{align}
We now perform the even extension of $\tilde{u}_{1}$, $\tilde{a}_{dd}$ and $\tilde{a}_{ij}$, $i,j=1,...,d-1$ and the odd extension of $\tilde{a}_{id}$ and $\tilde{a}_{di}$, $i=1,...,d-1$ with respect to the hyperplane $\tilde{z}_{d}=\frac{1}{2}\delta_{0}$, respectively. Then we carry out the periodic extension for them in $\tilde{z}_{d}$ axis with the period $2\delta_{0}.$ For brevity, we still denote the extended function and coefficient matrix by $\tilde{u}_{1}$ and $\tilde{A}$. Hence by the conormal boundary condition, we see that $\tilde{u}_{1}$ solves
\begin{align}\label{NP066}
-\mathrm{div}_{\tilde{z}}(\boldsymbol{\mathcal{\tilde{A}}}(\tilde{z},\nabla_{\tilde{z}}\tilde{u}_{1}))=0,\quad\mathrm{in}\;Q_{R_{0},\infty},
\end{align}
where $\boldsymbol{\mathcal{\tilde{A}}}$ is a nonlinear operate defined by $\boldsymbol{\mathcal{\tilde{A}}}(\tilde{z},\xi)=(\det(\tilde{A}))^{-1}|\tilde{A}^{T}\xi|^{p-2}\tilde{A}\tilde{A}^{T}\xi$ with $\tilde{z}\in Q_{R_{0},\infty}$ and $\xi\in\mathbb{R}^{d}$. Similarly as in Lemma \ref{LEM030}, it follows from \eqref{QWN001}, \eqref{WME01} and \eqref{FQ013}--\eqref{EQM896} that for any $r\in[\frac{1}{24}\delta_{0},\tilde{c}_{0}\delta_{0}^{1/m}]$, $\tilde{z}\in B_{r}$, and $\xi\in\mathbb{R}^{d}$,
\begin{align*}
|\tilde{A}|=\sqrt{d-1+(\delta_{0}\delta^{-1})^{2}+\sum^{d-1}_{j=1}|\tilde{a}_{dj}|^{2}}\leq\sqrt{d+\frac{13}{4}},
\end{align*}
and then
\begin{align}\label{DM683}
&|\boldsymbol{\mathcal{\tilde{A}}}(\tilde{z},\xi)-|\xi|^{p-2}\xi|\notag\\
&\leq|((\det(\tilde{A}))^{-1}-1)|\tilde{A}^{T}\xi|^{p-2}\tilde{A}\tilde{A}^{T}\xi|+||\tilde{A}^{T}\xi|^{p-2}(\tilde{A}-I_{d})A^{T}\xi|\notag\\
&\quad+||\tilde{A}^{T}\xi|^{p-2}\tilde{A}^{T}\xi-|\xi|^{p-2}\xi|\notag\\
&\leq|(\det(\tilde{A}))^{-1}-1||\tilde{A}|^{p}|\xi|^{p-1}+\left(|\tilde{A}|^{p-1}+\bar{c}_{p}(|\tilde{A}|^{2}+1)^{\frac{p-2}{2}}\right)|\tilde{A}-I_{d}||\xi|^{p-1}\notag\\
&\leq\widetilde{\mathcal{M}}r\delta_{0}^{-1/m}|\xi|^{p-1},
\end{align}
where $\bar{c}_{p}$ is defined by \eqref{UP01} and $\widetilde{\mathcal{M}}:=\widetilde{\mathcal{M}}(d,p,m,\kappa_{1},\kappa_{3})$ is given by
\begin{align}\label{A908}
\widetilde{\mathcal{M}}=&2^{m}\Big(d+\frac{13}{4}\Big)^{\frac{p}{2}}\kappa_{1}^{1/m-1}\kappa_{3}\notag\\
&+\sqrt{51840+4^{m+1}}\left(\Big(d+\frac{13}{4}\Big)^{\frac{p-1}{2}}+\bar{c}_{p}\Big(d+\frac{17}{4}\Big)^{\frac{p-2}{2}}\right)\kappa_{1}^{1/m-1}\kappa_{3}.
\end{align}
Let $\tilde{v}_{1}$ be the unique solution to the following Dirichlet problem:
\begin{align*}
\begin{cases}
-\mathrm{div}_{\tilde{z}}(|\nabla_{\tilde{z}}\tilde{v}_{1}|^{p-2}\nabla_{\tilde{z}}\tilde{v}_{1})=0,&\mathrm{in}\;B_{r},\\
\tilde{v}_{1}=\tilde{u}_{1},&\mathrm{on}\;\partial B_{r}.
\end{cases}
\end{align*}
Making use of \eqref{DM683}, it follows from the proof of Lemma \ref{Lem005} with minor modification that if $0<R_{0}\leq \min\{R_{0,1},R_{0,2},\overline{R}_{0}\}$, then for any $r\in[\frac{1}{24}\delta_{0},\tilde{c}_{0}\delta_{0}^{1/m}]$,
\begin{align}\label{MFD095}
\dashint_{B_{r}}|\nabla_{\tilde{z}}\tilde{u}_{1}-\nabla_{\tilde{z}}\tilde{v}_{1}|^{p}\leq\widetilde{\mathfrak{S}}_{p}(r\delta_{0}^{-1/m})^{\min\{2,p\}}\dashint_{B_{r}}|\nabla_{\tilde{z}}\tilde{u}_{1}|^{p},
\end{align}
where
\begin{align*}
\widetilde{\mathfrak{S}}_{p}=
\begin{cases}
\frac{\bar{c}^{p}_{\frac{p-2}{2}}2^{\frac{p(2-p)}{2}}\widetilde{\mathcal{M}}^{p}}{\underline{c}^{p}_{\frac{p-2}{2}}(p-1)^{p}}\left[\left(\frac{2^{p}(p-1)^{p-1}}{p^{p}}\right)^{\frac{2-p}{2}}+1\right],&1<p<2,\\
4\max\{1,2^{p-3}\}\widetilde{\mathcal{M}}^{2},&p\geq2
\end{cases}
\end{align*}
with $\bar{c}_{\frac{p-2}{2}},\,\underline{c}_{\frac{p-2}{2}},\,\widetilde{\mathcal{M}}$ defined by \eqref{INEC01} and \eqref{A908}. For $r\in[\frac{1}{24}\delta_{0},\tilde{c}_{0}\delta_{0}^{1/m}]$, write
\begin{align*}
\tilde{\psi}(r)=\left(\dashint_{B_{r}}|\nabla_{\tilde{z}}\tilde{u}_{1}-(\nabla_{\tilde{z}}\tilde{u}_{1})_{B_{r}}|^{p}\right)^{\frac{1}{p}}.
\end{align*}
Therefore, following the same proof of \eqref{FEA90} except replacing \eqref{M09} with \eqref{MFD095}, we obtain that for $\mu\in(0,1)$ and $r\in[\frac{1}{24}\delta_{0},\tilde{c}_{0}\delta_{0}^{1/m}]$,
\begin{align*}
&\tilde{\psi}(\mu r)\leq \widetilde{C}_{1}\mu^{\alpha}\tilde{\psi}(r)+\widetilde{C}_{2}\mu^{-\frac{d}{p}}(r\delta_{0}^{-1/m})^{\theta}\left(\dashint_{B_{r}}|\nabla_{\tilde{z}}\tilde{u}_{1}|^{p}\right)^{\frac{1}{p}},
\end{align*}
where $\widetilde{C}_{1}=\widetilde{C}_{1}(d,p)$, $\widetilde{C}_{2}=\widetilde{C}_{2}(d,p,m,\kappa_{1},\kappa_{3})$, $\theta=\min\{1,2/p\}$, and $\alpha$ is given by Lemma \ref{Lem001}. Denote
\begin{align*}
\mu_{3}=\min\{(2\widetilde{C}_{1})^{-\frac{1}{\alpha}},1\}.
\end{align*}
Then similar to Lemma \ref{Lem096}, we have
\begin{lemma}\label{Lem659}
Assume as above. Let $\tilde{u}_{1}\in W^{1,p}_{loc}(Q_{R_{0},\infty})$ be the solution to \eqref{NP066}. If $0<R_{0}\leq \min\{R_{0,1},R_{0,2},\overline{R}_{0}\}$ with $R_{0,1},\,R_{0,2}$ and $\overline{R}_{0}$ given by \eqref{C09}--\eqref{C12} and \eqref{WE860}, then for any $r\in[\frac{1}{24}\delta_{0},\tilde{c}_{0}\delta_{0}^{1/m}]$, $\mu\in(0,\mu_{3})$, and $0\leq i_{0}<j$,
\begin{align*}
&\sum^{j}_{k=i_{0}}\tilde{\psi}(\mu^{k}r)\notag\\
&\leq2\tilde{\psi}(\mu^{i_{0}}r)+\widetilde{C}_{2}\mu^{-\frac{d}{p}}\sum^{j}_{k=i_{0}+1}\sum^{k-1}_{i=i_{0}}(\widetilde{C}_{1}\mu^{\alpha})^{k-1-i}(\mu^{i}r\delta_{0}^{-1/m})^{\theta}\left(\dashint_{B_{\mu^{i}r}}|\nabla_{\tilde{z}}\tilde{u}_{1}|^{p}\right)^{\frac{1}{p}}.
\end{align*}
\end{lemma}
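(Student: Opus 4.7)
The proof will follow the same pattern as Lemma \ref{Lem096}, starting from the one-step mean oscillation decay inequality
\begin{align*}
\tilde{\psi}(\mu r)\leq \widetilde{C}_{1}\mu^{\alpha}\tilde{\psi}(r)+\widetilde{C}_{2}\mu^{-\frac{d}{p}}(r\delta_{0}^{-1/m})^{\theta}\left(\dashint_{B_{r}}|\nabla_{\tilde{z}}\tilde{u}_{1}|^{p}\right)^{\frac{1}{p}},
\end{align*}
which was derived just before the lemma statement using the comparison estimate \eqref{MFD095} and the triangle-inequality argument from \eqref{FEA90}. The plan is to iterate this recurrence on the geometric sequence of radii $\{\mu^{i} r\}$ and then sum.

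First I would fix $r\in[\frac{1}{24}\delta_{0},\tilde{c}_{0}\delta_{0}^{1/m}]$ and iteratively apply the one-step inequality with $r$ replaced by $\mu^{i}r$ for $i=i_{0},i_{0}+1,\ldots,k-1$. Telescoping yields
\begin{align*}
\tilde{\psi}(\mu^{k}r)\leq (\widetilde{C}_{1}\mu^{\alpha})^{k-i_{0}}\tilde{\psi}(\mu^{i_{0}}r)+\widetilde{C}_{2}\mu^{-\frac{d}{p}}\sum^{k-1}_{i=i_{0}}(\widetilde{C}_{1}\mu^{\alpha})^{k-1-i}(\mu^{i}r\delta_{0}^{-1/m})^{\theta}\left(\dashint_{B_{\mu^{i}r}}|\nabla_{\tilde{z}}\tilde{u}_{1}|^{p}\right)^{\frac{1}{p}}.
\end{align*}
Summing this bound over $k$ from $i_{0}$ to $j$ and invoking the choice $\mu\in(0,\mu_{3})$ with $\mu_{3}=\min\{(2\widetilde{C}_{1})^{-1/\alpha},1\}$ (so that $\widetilde{C}_{1}\mu^{\alpha}\leq\tfrac{1}{2}$) gives the geometric bound
\begin{align*}
\sum^{j}_{k=i_{0}}(\widetilde{C}_{1}\mu^{\alpha})^{k-i_{0}}\leq\sum^{\infty}_{\ell=0}2^{-\ell}=2,
\end{align*}
which produces the leading term $2\tilde{\psi}(\mu^{i_{0}}r)$. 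Reorganizing the resulting double sum over $(k,i)$ with $i_{0}+1\leq k\leq j$ and $i_{0}\leq i\leq k-1$ yields exactly the claimed inequality.

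The essentially routine obstacle is to verify that the one-step inequality remains applicable at every scale $\mu^{i}r$ appearing in the iteration. The hypothesis $0<R_{0}\leq\min\{R_{0,1},R_{0,2},\overline{R}_{0}\}$ guarantees the perturbative bounds \eqref{FQ013}--\eqref{EQM896} at the largest radius $r=\tilde{c}_{0}\delta_{0}^{1/m}$; since those bounds are monotone in $r$, they only improve as $r$ decreases, so the perturbation estimate \eqref{DM683}, the comparison estimate \eqref{MFD095}, and hence the one-step decay all continue to hold at every $\mu^{i}r\leq r$. Thus no case split is required to close the iteration, and the proof reduces to the geometric-series manipulation described above.
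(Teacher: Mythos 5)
Your proof is correct and follows the paper's intended route: the paper omits the proof (stating only that it is ``similar to Lemma \ref{Lem096}''), and you have correctly identified that one iterates the one-step decay inequality stated just before the lemma, telescopes, and sums the geometric series using $\widetilde{C}_{1}\mu^{\alpha}<\frac{1}{2}$, exactly as in the proof of Lemma \ref{Lem096}. Your observation that the perturbative bounds \eqref{FQ013}--\eqref{EQM896} and hence the comparison estimate \eqref{MFD095} improve monotonically as $r$ decreases, which justifies applying the one-step inequality at every scale $\mu^{i}r$ even below $\tfrac{1}{24}\delta_{0}$, is a correct and useful detail left implicit in the paper.
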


We now give the proof of Theorem \ref{thm001} by combining Lemmas \ref{Lem001}, \ref{Lem096} and \ref{Lem659}.
\begin{proof}[Proof of Theorem \ref{thm001}]
For any fixed $x_{0}\in\Omega_{R_{0}}$, $0<R_{0}\leq\min\{\overline{R}_{0},R_{0,1},R_{0,2}\}$, and $0<\mu\leq\frac{2}{3}\min\limits_{1\leq i\leq3}\mu_{i}\leq\frac{1}{6}$, denote $r_{j}=\frac{\tilde{c}_{0}}{13}\mu^{j}\delta_{0}^{1/m}$ for $j\geq0$, where $\tilde{c}_{0}$ and $\delta_{0}$ are defined by \eqref{C681} and \eqref{DELTA02}, and $\mu_{i}$, $i=1,2,3$ are, respectively, given by Lemmas \ref{Lem001}, \ref{Lem096} and \ref{Lem659}. Pick two nonnegative integers $j_{1}$ and $j_{2}$ such that
\begin{align*}
r_{j_{1}}\geq\frac{1}{24}\delta_{0},\quad r_{j_{1}+1}<\frac{1}{24}\delta_{0},\quad r_{j_{2}}\geq\mathrm{dist}(x_{0},\Gamma_{2R_{0}}^{\pm}),\quad r_{j_{2}+1}<\mathrm{dist}(x_{0},\Gamma_{2R_{0}}^{\pm}).
\end{align*}
We here explain the implication of indices $j_{1}$ and $j_{2}$. To be specific, when $j\geq j_{1}+1$, $B_{r_{j}}(x_{0})$ intersects at most one of $\Gamma_{2R_{0}}^{\pm}$; when $j\geq j_{2}+1$, $B_{r_{j}}(x_{0})$ is located in the interior of $\Omega_{2R_{0}}$; when $j\leq j_{1}$ or $j\leq j_{2}$, there may appear the third case that $B_{r_{j}}(x_{0})$ intersects both $\Gamma_{2R_{0}}^{+}$ and $\Gamma_{2R_{0}}^{-}.$ Note that since $0<R_{0}\leq\overline{R}_{0},$ we have $\frac{1}{24}\delta_{0}\leq\frac{\tilde{c}_{0}}{13}\delta_{0}^{1/m}$, where $\overline{R}_{0}$ is given by \eqref{WE860}.

For $j\geq0$, define
\begin{align*}
\phi_{j}=
\begin{cases}
\left(\displaystyle\dashint_{B_{13r_{j}}}|\nabla_{\tilde{z}}\tilde{u}_{1}-(\nabla_{\tilde{z}}\tilde{u}_{1})_{B_{13r_{j}}}|^{p}\right)^{\frac{1}{p}},&\mathrm{for}\;0\leq j\leq j_{1},\\
\left(\displaystyle\dashint_{\Omega_{r_{j}}(x_{0})}|\nabla u-(\nabla u)_{\Omega_{r_{j}}(x_{0})}|^{p}\right)^{\frac{1}{p}},&\mathrm{for}\;j\geq j_{1}+1,
\end{cases}
\end{align*}
and
\begin{align*}
T_{j}=
\begin{cases}
\left(\displaystyle\dashint_{B_{13r_{j}}}|\nabla_{\tilde{z}}\tilde{u}_{1}|^{p}\right)^{\frac{1}{p}},&\mathrm{for}\;0\leq j\leq j_{1},\\
\left(\displaystyle\dashint_{\Omega_{3 r_{j}}(x_{0})}|\nabla u|^{p}\right)^{\frac{1}{p}},&\mathrm{for}\;j\geq j_{1}+1,
\end{cases}
\end{align*}
and
\begin{align*}
\omega_{j}=
\begin{cases}
(\nabla_{\tilde{z}}\tilde{u}_{1})_{B_{13r_{j}}},&\mathrm{for}\;0\leq j\leq j_{1},\\
(\nabla u)_{\Omega_{r_{j}}(x_{0})},&\mathrm{for}\;j\geq j_{1}+1.
\end{cases}
\end{align*}

{\bf Step 1.} Since $0<\mu\leq\frac{1}{6}$, we have $\Omega_{3r_{j+1}}(x_{0})\subset\Omega_{\frac{1}{2}r_{j}}(x_{0})$ for $j\geq0$. Note that $3r_{j_{1}+1}<\frac{1}{8}\delta_{0}$, it then follows from \eqref{PER02} that
\begin{align}\label{WZQA9062}
T_{j_{1}+1}\leq&\bigg(\frac{|\Omega_{\frac{1}{2}r_{j_{1}}}(x_{0})|}{|\Omega_{3r_{j_{1}+1}}(x_{0})|}\dashint_{\Omega_{\frac{1}{2}r_{j_{1}}}(x_{0})}|\nabla u|^{p}\bigg)^{\frac{1}{p}}\notag\\
\leq& C(d)\mu^{-\frac{d}{p}}\bigg(\dashint_{\Omega_{\frac{1}{2}r_{j_{1}}}(x_{0})}|\nabla u|^{p}\bigg)^{\frac{1}{p}}.
\end{align}
In view of $r_{j_{1}}\geq\frac{1}{24}\delta_{0}$, we have $Q_{\frac{1}{2}r_{j_{1}},\frac{1}{2}\delta_{0}}\subset B_{13r_{j_{1}}}$. Then we obtain that $\tilde{\Lambda}(\Omega_{\frac{1}{2}r_{j_{1}}}(x_{0}))\subset\tilde{\Lambda}(\Omega_{x_{0}',\frac{1}{2}r_{j_{1}}})=Q_{\frac{1}{2}r_{j_{1}},\frac{1}{2}\delta_{0}}\subset B_{13r_{j_{1}}}.$ This, together with \eqref{PER02}, leads to that if $0<R_{0}\leq\min\{\overline{R}_{0},R_{0,1},R_{0,2}\}$,
\begin{align*}
\Bigg(\dashint_{\Omega_{\frac{1}{2}r_{j_{1}}}(x_{0})}|\nabla u|^{p}\Bigg)^{\frac{1}{p}}\leq&C(d)\left(\frac{|B_{13r_{j_{1}}}|}{|\Omega_{\frac{1}{2}r_{j_{1}}}(x_{0})|}\dashint_{B_{13r_{j_{1}}}}|\nabla_{\tilde{z}}\tilde{u}_{1}|^{p}\right)^{\frac{1}{p}}\leq C(d)T_{j_{1}}.
\end{align*}
Inserting this into \eqref{WZQA9062}, we have $T_{j_{1}+1}\leq C(d)\mu^{-\frac{d}{p}}T_{j_{1}}.$ Hence we further obtain that for any $j\geq0,$
\begin{align*}
T_{j+1}\leq& C(d)\mu^{-\frac{d}{p}}T_{j},
\end{align*}
which, in combination with the triangle inequality, reads that for any $j\leq j_{1},$
\begin{align*}
T_{j+1}\leq& C(d)\mu^{-\frac{d}{p}}T_{j}\leq C(d)\mu^{-\frac{d}{p}}(|\omega_{j}|+\phi_{j}).
\end{align*}
Observe that for $j\geq0,$ there has $\Omega_{3r_{j+1}}(x_{0})\subset\Omega_{r_{j}}(x_{0})$ in virtue of $0<\mu\leq\frac{1}{6}$. It then follows from \eqref{PER02} and the triangle inequality again that for any $j\geq j_{1}+1$,
\begin{align*}
T_{j+1}\leq& C(d)\mu^{-\frac{d}{p}}\left(\dashint_{\Omega_{r_{j}}(x_{0})}|\nabla u|^{p}\right)^{\frac{1}{p}}\leq C(d)\mu^{-\frac{d}{p}}(|\omega_{j}|+\phi_{j}).
\end{align*}
Consequently, we have
\begin{align}\label{DEA678}
T_{j+1}\leq& C(d)\mu^{-\frac{d}{p}}(|\omega_{j}|+\phi_{j}),\quad\text{for any}\;j\geq0.
\end{align}

Note that for $0\leq k\leq j_{1}$,
\begin{align*}
|\omega_{k}-\omega_{k-1}|^{p}\leq2^{p-1}(|\omega_{k}-\nabla_{\tilde{z}}\tilde{u}_{1}(\tilde{z})|^{p}+|\nabla_{\tilde{z}}\tilde{u}_{1}(\tilde{z})-\omega_{k-1}|^{p}).
\end{align*}
Integrating it in $\tilde{z}$ on $B_{13r_{k}}$ and taking the $p$-th root, we obtain
\begin{align*}
|\omega_{k}-\omega_{k-1}|\leq2^{\frac{p-1}{p}}(\phi_{k}+\mu^{-\frac{d}{p}}\phi_{k-1}),
\end{align*}
which, together with H\"{o}lder's inequality, gives that
\begin{align}\label{AD9150}
|\omega_{j}|\leq
\begin{cases}
T_{j_{0}},&\text{if }0\leq j=j_{0}\leq j_{1},\\
T_{j_{0}}+4\mu^{-\frac{d}{p}}\sum^{j}_{k=j_{0}}\phi_{k},&\text{if }0\leq j_{0}<j\leq j_{1},
\end{cases}
\end{align}
In exactly the same way, we have from \eqref{PER02} that
\begin{align}\label{AD9153}
|\omega_{j}|\leq
\begin{cases}
C(d)T_{j},&\text{if }j=l\geq j_{1}+1,\\
C(d)(T_{l}+\mu^{-\frac{d}{p}}\sum^{j}_{k=l}\phi_{k}),&\text{if }j>l\geq j_{1}+1,
\end{cases}
\end{align}
When $0\leq j_{0}\leq j\leq j_{1}$, it follows from Lemma \ref{Lem659}, \eqref{AD9150}, the triangle inequality and H\"{o}lder's inequality that

$(i)$ if $j=j_{0}$,
\begin{align}\label{D961}
|\omega_{j_{0}}|+\phi_{j_{0}}\leq3T_{j_{0}};
\end{align}

$(ii)$ if $j>j_{0}$,
\begin{align}\label{D980}
&|\omega_{j}|+\sum^{j}_{k=j_{0}}\phi_{k}\leq T_{j_{0}}+5\mu^{-\frac{d}{p}}\sum^{j}_{k=j_{0}}\phi_{k}\notag\\
&\leq T_{j_{0}}+10\mu^{-\frac{d}{p}}\phi_{j_{0}}+\widetilde{C}_{2}\mu^{-\frac{d}{p}}\sum^{j}_{k=j_{0}+1}\sum^{k-1}_{i=j_{0}}(\widetilde{C}_{1}\mu^{\alpha})^{k-1-i}(\mu^{i}\tilde{c}_{0})^{\theta}\left(\dashint_{B_{13r_{i}}}|\nabla_{\tilde{z}}\tilde{u}_{1}|^{p}\right)^{\frac{1}{p}}\notag\\
&\leq(1+20\mu^{-\frac{d}{p}})T_{j_{0}}+\widetilde{C}_{2}\mu^{-\frac{d}{p}}\sum^{j}_{k=j_{0}+1}\sum^{k-1}_{i=j_{0}}(\widetilde{C}_{1}\mu^{\alpha})^{k-1-i}\mu^{i\theta}T_{i},
\end{align}
where $\widetilde{C}_{1}=\widetilde{C}_{1}(d,p)$ and $\widetilde{C}_{2}=\widetilde{C}_{2}(d,p,m,\kappa_{1},\kappa_{3})$.

We now proceed to consider the case when $j\geq j_{1}+1$. Since there may appear no intersection point or only one intersection point between $B_{r_{j}}(x_{0})$ and $\Gamma_{2R_{0}}^{\pm}$ in this case, we need to compare the values of $j_{1}$ and $j_{2}$ for the purpose of establishing the similar estimates as in \eqref{D961}--\eqref{D980}. Therefore, we divide into two subcases to discuss as follows.

{\bf Case 1.} Consider the case when $j_{1}<j_{2}$. First, let $j_{1}+1\leq j\leq j_{2}$. Then we have $r_{j}\geq\mathrm{dist}(x_{0},\Gamma_{2R_{0}}^{\pm})$. Pick $\hat{x}_{0}\in\Gamma_{2R_{0}}^{\pm}$ such that $\mathrm{dist}(x_{0},\Gamma_{2R_{0}}^{\pm})=|x_{0}-\hat{x}_{0}|$. Then we have $\Omega_{r_{j}}(x_{0})\subset\Omega_{2r_{j}}(\hat{x}_{0})\subset\Omega_{3r_{j}}(x_{0})$ and $2r_{j}<\frac{1}{8}\delta_{0}$. Therefore, it follows from \eqref{PER02} and Lemma \ref{Lem096} that

$(i)$ if $j=j_{1}+1$,
\begin{align}\label{QW905}
\phi_{j_{1}+1}\leq&C(d)\psi(\hat{x}_{0},2r_{j_{1}+1})\leq C(d)Y_{j_{1}+1}\leq C(d)T_{j_{1}+1};
\end{align}

$(ii)$ if $j>j_{1}+1$,
\begin{align}\label{QW96}
\sum^{j}_{k=j_{1}+1}\phi_{k}\leq&C(d)\sum^{j}_{k=j_{1}+1}\psi(\hat{x}_{0},2r_{k})\notag\\
\leq&C(d)Y_{j_{1}+1}+C_{2}\mu^{-\frac{d}{p}}\sum^{j}_{k=j_{1}+2}\sum^{k-1}_{i=j_{1}+1}(C_{1}\mu^{\alpha})^{k-1-i}\mu^{i\theta}Y_{i}\notag\\
\leq&C(d)T_{j_{1}+1}+C_{2}\mu^{-\frac{d}{p}}\sum^{j}_{k=j_{1}+2}\sum^{k-1}_{i=j_{1}+1}(C_{1}\mu^{\alpha})^{k-1-i}\mu^{i\theta}T_{i},
\end{align}
where $C_{1}=C_{1}(d,p)$, $C_{2}=C_{2}(d,p,m,\kappa_{1},\kappa_{3},\kappa_{4})$, and
\begin{align*}
Y_{j}:=\left(\dashint_{\Omega_{2r_{j}}(\hat{x}_{0})}|\nabla u|^{p}\right)^{\frac{1}{p}},\quad\text{for }j_{1}+1\leq j\leq j_{2}.
\end{align*}
Then combining \eqref{DEA678}, \eqref{AD9153} and \eqref{QW905}--\eqref{QW96}, we obtain that for $j_{1}+1\leq j\leq j_{2}$,

$(i)$ if $j=j_{1}+1$,
\begin{align}\label{M9061}
&|\omega_{j_{1}+1}|+\phi_{j_{1}+1}\leq C(d)T_{j_{1}+1};
\end{align}

$(ii)$ if $j>j_{1}+1$,
\begin{align}\label{M96}
&|\omega_{j}|+\sum^{j}_{k=j_{1}+1}\phi_{k}\leq C(d)\bigg(T_{j_{1}+1}+\mu^{-\frac{d}{p}}\sum^{j}_{k=j_{1}+1}\phi_{k}\bigg)\notag\\
&\leq C(d)\mu^{-\frac{d}{p}}T_{j_{1}+1}+C_{2}\mu^{-\frac{2d}{p}}\sum^{j}_{k=j_{1}+2}\sum^{k-1}_{i=j_{1}+1}(C_{1}\mu^{\alpha})^{k-1-i}\mu^{i\theta}T_{i}\notag\\
&\leq\mu^{-\frac{2d}{p}}\bigg(C(d)(|\omega_{j_{1}}|+\phi_{j_{1}})+C_{2}\sum^{j}_{k=j_{1}+2}\sum^{k-1}_{i=j_{1}+1}(C_{1}\mu^{\alpha})^{k-1-i}\mu^{i\theta}T_{i}\bigg).
\end{align}

On the other hand, for $j\geq j_{2}+1$, we deduce from Lemma \ref{Lem001}, \eqref{DEA678} and \eqref{AD9153} that
\begin{align*}
|\omega_{j}|+\sum^{j}_{k=j_{2}+1}\phi_{k}\leq& C(d)\bigg(T_{j_{2}+1}+\mu^{-\frac{d}{p}}\sum^{j}_{k=j_{2}+1}\phi_{k}\bigg)\leq C(d)\mu^{-\frac{d}{p}}T_{j_{2}+1}\notag\\
\leq&C(d)\mu^{-\frac{2d}{p}}(|\omega_{j_{2}}|+\phi_{j_{2}}).
\end{align*}
This, in combination with \eqref{M96}, yields that for any $j\geq j_{2}+1>j_{1}+1$,
\begin{align}\label{QAE9600}
&|\omega_{j}|+\sum^{j}_{k=j_{1}+1}\phi_{k}=|\omega_{j}|+\sum^{j}_{k=j_{2}+1}\phi_{k}+\sum^{j_{2}}_{k=j_{1}+1}\phi_{k}\notag\\
&\leq C(d)\mu^{-\frac{2d}{p}}(|\omega_{j_{2}}|+\phi_{j_{2}})+\sum^{j_{2}}_{k=j_{1}+1}\phi_{k}\notag\\
&\leq\mu^{-\frac{4d}{p}}\bigg(C(d)(|\omega_{j_{1}}|+\phi_{j_{1}})+C_{2}\sum^{j}_{k=j_{1}+2}\sum^{k-1}_{i=j_{1}+1}(C_{1}\mu^{\alpha})^{k-1-i}\mu^{i\theta}T_{i}\bigg),
\end{align}
which, together with \eqref{M96} again, shows that \eqref{QAE9600} holds for any $j>j_{1}+1$.

{\bf Case 2.} Consider the case when $j_{1}\geq j_{2}$. Then for any $j\geq j_{1}+1$, we have $r_{j}\leq\mathrm{dist}(x_{0},\Gamma_{2R_{0}}^{\pm})$. Hence, using Lemma \ref{Lem001}, \eqref{DEA678} and \eqref{AD9153}, we derive
\begin{align}\label{QAE9601}
|\omega_{j}|+\sum^{j}_{k=j_{1}+1}\phi_{k}\leq&C(d)\mu^{-\frac{2d}{p}}(|\omega_{j_{1}}|+\phi_{j_{1}}).
\end{align}

A combination of \eqref{D961}--\eqref{D980}, \eqref{M9061} and \eqref{QAE9600}--\eqref{QAE9601} gives that for any $0\leq j_{0}\leq j_{1}$ and $j\geq j_{0}$,

$(i)$ if $j=j_{0}$,
\begin{align}\label{QAE983}
|\omega_{j_{0}}|+\phi_{j_{0}}\leq3T_{j_{0}};
\end{align}

$(ii)$ if $j>j_{0}$,
\begin{align}\label{QAE9603}
&|\omega_{j}|+\sum^{j}_{k=j_{0}}\phi_{k}\leq\mu^{-\frac{5d}{p}}\bigg(C(d)T_{j_{0}}+\overline{C}_{2}\sum^{j}_{k=j_{0}+1}\sum^{k-1}_{i=j_{0}}(\overline{C}_{1}\mu^{\alpha})^{k-1-i}\mu^{i\theta}T_{i}\bigg),
\end{align}
where $\overline{C}_{1}=\overline{C}_{1}(d,p)$ and $\overline{C}_{2}=\overline{C}_{2}(d,p,m,\kappa_{1},\kappa_{3},\kappa_{4})$.

{\bf Step 2.} Choose an integer $j_{0}$ such that $0\leq j_{0}\leq j_{1}$, whose value will be determined later. We first calculate the series $\sum\limits^{\infty}_{k=j_{0}+1}\sum\limits^{k-1}_{i=j_{0}}(\overline{C}_{1}\mu^{\alpha})^{k-1-i}\mu^{i\theta}$, where $\overline{C}_{1}$ is given by \eqref{QAE9603}. On one hand, if $\theta\neq\alpha$, then for any $j\geq j_{0}+1$,
\begin{align*}
&\sum^{j}_{k=j_{0}+1}\sum^{k-1}_{i=j_{0}}(\overline{C}_{1}\mu^{\alpha})^{k-1-i}\mu^{i\theta}=\sum^{j}_{k=j_{0}+1}(\overline{C}_{1}\mu^{\alpha})^{k-1}\sum_{i=j_{0}}^{k-1}(\overline{C}_{1}^{-1}\mu^{\theta-\alpha})^{i}\notag\\
&=\frac{(\overline{C}_{1}^{-1}\mu^{\theta-\alpha})^{j_{0}}}{1-\overline{C}_{1}^{-1}\mu^{\theta-\alpha}}\sum^{j}_{k=j_{0}+1}(\overline{C}_{1}\mu^{\alpha})^{k-1}-\frac{1}{\overline{C}_{1}\mu^{\alpha}(1-\overline{C}_{1}^{-1}\mu^{\theta-\alpha})}\sum^{j}_{k=j_{0}+1}\mu^{k\theta}\notag\\
&=\frac{1}{1-\overline{C}_{1}^{-1}\mu^{\theta-\alpha}}\left(\frac{\mu^{j_{0}\theta}(1-(\overline{C}_{1}\mu^{\alpha})^{j-i_{0}})}{1-\overline{C}_{1}\mu^{\alpha}}-\frac{\mu^{(j_{0}+1)\theta-\alpha}(1-\mu^{(j-j_{0})\theta})}{\overline{C}_{1}(1-\mu^{\theta})}\right),
\end{align*}
where we require $\mu\neq\overline{C}_{1}^{\frac{1}{\theta-\alpha}}$ and $\mu\neq\overline{C}_{1}^{-\frac{1}{\alpha}}$ by decreasing $\mu$ if necessary. By sending $j\rightarrow\infty$, we obtain that if $\mu\leq(2\overline{C}_{1})^{-\frac{1}{\alpha}}$,
\begin{align*}
&\sum^{\infty}_{k=j_{0}+1}\sum^{k-1}_{i=j_{0}}(\overline{C}_{1}\mu^{\alpha})^{k-1-i}\mu^{i\theta}=\frac{1}{1-\overline{C}_{1}^{-1}\mu^{\theta-\alpha}}\left(\frac{\mu^{j_{0}\theta}}{1-\overline{C}_{1}\mu^{\alpha}}-\frac{\mu^{(j_{0}+1)\theta-\alpha}}{\overline{C}_{1}(1-\mu^{\theta})}\right)\notag\\
&=\frac{\mu^{j_{0}\theta}}{(1-\overline{C}_{1}\mu^{\alpha})(1-\mu^{\theta})}\leq\frac{2\mu^{j_{0}\theta}}{1-6^{-\theta}},
\end{align*}
where we also used the fact that $0<\mu\leq\frac{1}{6}.$

On the other hand, if $\theta=\alpha$, then for any $j\geq j_{0}+1$,
\begin{align*}
&\sum^{j}_{k=j_{0}+1}\sum^{k-1}_{i=j_{0}}(\overline{C}_{1}\mu^{\alpha})^{k-1-i}\mu^{i\theta}=\sum^{j}_{k=j_{0}+1}(\overline{C}_{1}\mu^{\alpha})^{k-1}\sum^{k-1}_{i=j_{0}}\overline{C}_{1}^{-i}\notag\\
&\leq\frac{1}{\overline{C}_{1}^{j_{0}-1}(\overline{C}_{1}-1)}\sum^{j}_{k=j_{0}+1}(\overline{C}_{1}\mu^{\alpha})^{k-1}=\frac{(\overline{C}_{1}\mu^{\alpha})^{j_{0}}(1-(\overline{C}_{1}\mu^{\alpha})^{j-j_{0}})}{\overline{C}_{1}^{j_{0}-1}(\overline{C}_{1}-1)(1-\overline{C}_{1}\mu^{\alpha})},
\end{align*}
which reads that if $\mu\leq(2\overline{C}_{1})^{-\frac{1}{\alpha}}$,
\begin{align*}
\sum^{\infty}_{k=j_{0}+1}\sum^{k-1}_{i=j_{0}}(\overline{C}_{1}\mu^{\alpha})^{k-1-i}\mu^{i\theta}\leq\frac{\overline{C}_{1}\mu^{j_{0}\theta}}{(\overline{C}_{1}-1)(1-\overline{C}_{1}\mu^{\alpha})}\leq\frac{2\overline{C}_{1}\mu^{j_{0}\theta}}{\overline{C}_{1}-1}.
\end{align*}
Denote
\begin{align*}
\mu^{\ast}:=\min\left\{\frac{2}{3}\min\limits_{1\leq i\leq3}\mu_{i},(2\overline{C}_{1})^{-\frac{1}{\alpha}},\left(\frac{1-6^{-\theta}}{4\overline{C}_{2}}\right)^{\frac{1}{j_{0}\theta-\frac{6d}{p}}},\left(\frac{\overline{C}_{1}-1}{4\overline{C}_{1}\overline{C}_{2}}\right)^{\frac{1}{j_{0}\theta-\frac{6d}{p}}}\right\}.
\end{align*}
Therefore, if $\mu$ is chosen to be $\mu_{0}$ with $\mu_{0}$ given by
\begin{align}\label{QM86}
\mu_{0}=
\begin{cases}
\min\{\mu^{\ast},\overline{C}_{1}^{\frac{1}{\theta-\alpha}}\},&\text{if }\theta\neq\alpha,\\
\mu^{\ast},&\text{if }\theta=\alpha,
\end{cases}
\end{align}
we have
\begin{align}\label{T902}
\overline{C}_{2}\mu^{-\frac{6d}{p}}\sum^{\infty}_{k=j_{0}+1}\sum^{k-1}_{i=j_{0}}(\overline{C}_{1}\mu^{\alpha})^{k-1-i}\mu^{i\theta}\leq\frac{1}{2},
\end{align}
where $\overline{C}_{i}$, $i=1,2$ are given by \eqref{T901} below. In order to ensure the validity of $\mu_{0}$ and $j_{0}\leq j_{1}$, we choose $j_{0}=[\frac{6d}{p\theta}]+1=[\frac{6d}{\min\{p,2\}}]+1>\frac{6d}{p\theta}$ and let $0<R_{0}\leq\min\limits_{1\leq i\leq3}R_{0,i}$, where $R_{0,i}$, $i=1,2,3$ are defined by \eqref{C09}--\eqref{WAD099}.

Claim that if $0<R_{0}\leq\min\limits_{1\leq i\leq3}R_{0,i}$ and $\mu=\mu_{0}$,
\begin{align}\label{AM9102}
|\nabla u(x_{0})|\leq CT_{j_{0}},
\end{align}
where $C=C(d,p,m,\kappa_{1},\kappa_{3},\kappa_{4})$. The pointwise gradient estimate in \eqref{AM9102} implies that for any fixed point $x_{0}\in\Omega_{R_{0}}$, the gradient $|\nabla u(x_{0})|$ can be controlled by the $p$-average integration of the transformed gradient $|\nabla_{\tilde{z}}\tilde{u}_{1}|$ over a ball $B_{13r_{j_{0}}}$ with the scale of radius larger than the height $\delta_{0}$ of the thin gap.

We now divide into four subcases to prove \eqref{AM9102} as follows.

{\bf Case 1.} If $|\nabla u(x_{0})|\leq T_{j_{0}}$, then \eqref{AM9102} holds.

{\bf Case 2.} If $|\nabla u(x_{0})|>T_{j_{0}}$ and $|\nabla u(x_{0})|\leq T_{j_{0}+1}$, we deduce from \eqref{DEA678} and \eqref{QAE983} that
\begin{align*}
|\nabla u(x_{0})|\leq T_{j_{0}+1}\leq C(d)\mu_{0}^{-\frac{d}{p}}(|\omega_{j_{0}}|+\phi_{j_{0}})\leq C(d)\mu_{0}^{-\frac{d}{p}}T_{j_{0}}.
\end{align*}

{\bf Case 3.} If $|\nabla u(x_{0})|>T_{j}$ for any $j_{0}<j\leq j_{3}$ and $|\nabla u(x_{0})|\leq T_{j_{3}+1}$, it then follows from \eqref{DEA678}, \eqref{QAE9603} and \eqref{T902} that
\begin{align}\label{T901}
|\nabla u(x_{0})|\leq& T_{j_{3}+1}\leq C(d)\mu^{-\frac{d}{p}}(|\omega_{j_{3}}|+\phi_{j_{3}})\notag\\
\leq&C(d)\mu_{0}^{-\frac{6d}{p}}T_{j_{0}}+\overline{C}_{2}\mu_{0}^{-\frac{6d}{p}}\sum^{j_{3}}_{k=j_{0}+1}\sum^{k-1}_{i=j_{0}}(\overline{C}_{1}\mu_{0}^{\alpha})^{k-1-i}\mu^{i\theta}|\nabla u(x_{0})|\notag\\
\leq&C(d)\mu_{0}^{-\frac{6d}{p}}T_{j_{0}}+\frac{1}{2}|\nabla u(x_{0})|,
\end{align}
where $\overline{C}_{1}=\overline{C}_{1}(d,p)$ and $\overline{C}_{2}=\overline{C}_{2}(d,p,m,\kappa_{1},\kappa_{3},\kappa_{4})$. Consequently, \eqref{AM9102} holds under this case. We additionally remark that the value of $\overline{C}_{1}$ in \eqref{T901} is consistent with that in \eqref{QAE9603}, while the value of $\overline{C}_{2}$ in \eqref{T901} is greater than that in \eqref{QAE9603}.

{\bf Case 4.} In the case when $|\nabla u(x_{0})|>T_{j}$ for any $j\geq j_{0}$, it then follows from \eqref{QAE9603} and \eqref{T902} that
\begin{align*}
|\omega_{j}|\leq C(d)\mu_{0}^{-\frac{5d}{p}}T_{j_{0}}+\frac{1}{2}|\nabla u(x_{0})|.
\end{align*}
Since $u\in C^{1}(\Omega_{2R_{0}})$, it then follows from Lebesgue-Besicovitch Differentiation Theorem (see Theorem 1.32 in \cite{EG2015}) that
\begin{align*}
|\nabla u(x_{0})|\leq C(d)\mu_{0}^{-\frac{5d}{p}}T_{j_{0}}+\frac{1}{2}|\nabla u(x_{0})|,
\end{align*}
which implies that \eqref{AM9102} holds.

{\bf Step 3.} This step aims to establish the Caccioppoli inequality for the solution $\tilde{u}_{1}$, which is used to extract the height $\delta_{0}$ from the pointwise gradient estimate \eqref{AM9102}. Fix $\lambda\in\mathbb{R}$ and choose a nonnegative smooth cutoff function $\eta$ such that $\eta=1$ in $B_{13r_{j_{0}}}$, $\eta=0$ in $\mathbb{R}^{d}\setminus B_{26 r_{j_{0}}}$, and $|\nabla\eta|\leq\frac{2}{13r_{j_{0}}}$. Since $\mu=\mu_{0}\leq\frac{1}{6}$ and $j_{0}>1$, then
\begin{align}\label{W860}
26 r_{j_{0}}=2\tilde{c}_{0}\mu_{0}^{j_{0}}\delta_{0}^{1/m}<\frac{1}{3}\tilde{c}_{0}\delta_{0}^{1/m}<R_{0}.
\end{align}
Hence, multiplying equation \eqref{NP066} by test function $\eta^{p}(\tilde{u}_{1}-\lambda)$ and integrating by parts over $B_{26r_{j_{0}}}$, we have
\begin{align*}
\int_{B_{26 r_{j_{0}}}}\eta^{p}\langle\boldsymbol{\mathcal{\tilde{A}}}(\tilde{z},\nabla_{\tilde{z}}\tilde{u}_{1}),\nabla_{\tilde{z}}\tilde{u}_{1}\rangle=-p\int_{B_{26 r_{j_{0}}}}\eta^{p-1}(\tilde{u}_{1}-\lambda)\langle\boldsymbol{\mathcal{\tilde{A}}}(\tilde{z},\nabla_{\tilde{z}}\tilde{u}_{1}),\nabla_{\tilde{z}}\tilde{u}_{1}\rangle.
\end{align*}
On one hand, it follows from \eqref{DM683} and \eqref{QM86} that
\begin{align*}
&\int_{B_{26 r_{j_{0}}}}\eta^{p}\langle\boldsymbol{\mathcal{\tilde{A}}}(\tilde{z},\nabla_{\tilde{z}}\tilde{u}_{1}),\nabla_{\tilde{z}}\tilde{u}_{1}\rangle\notag\\
&\geq\int_{B_{26 r_{j_{0}}}}\eta^{p}|\nabla_{\tilde{z}}\tilde{u}_{1}|^{p}-\int_{B_{26r_{j_{0}}}}\eta^{p}|\boldsymbol{\mathcal{\tilde{A}}}(\tilde{z},\nabla_{\tilde{z}}\tilde{u}_{1})-|\nabla_{\tilde{z}}\tilde{u}_{1}|^{p-2}\nabla_{\tilde{z}}\tilde{u}_{1}||\nabla_{\tilde{z}}\tilde{u}_{1}|\notag\\
&\geq(1-26r_{j_{0}}\widetilde{\mathcal{M}}\delta_{0}^{-1/m})\int_{B_{26r_{j_{0}}}}|\nabla_{\tilde{z}}\tilde{u}_{1}|^{p}\geq\frac{3}{4}\int_{B_{26r_{j_{0}}}}|\nabla_{\tilde{z}}\tilde{u}_{1}|^{p},
\end{align*}
where we used the fact that
\begin{align}\label{QM666}
26r_{j_{0}}\widetilde{\mathcal{M}}\delta_{0}^{-1/m}=&2\tilde{c}_{0}\widetilde{\mathcal{M}}\mu_{0}^{j_{0}}<\overline{C}_{2}\mu_{0}^{j_{0}}\leq\overline{C}_{2}\left(\frac{1-6^{-\theta}}{4\overline{C}_{2}}\right)^{\frac{j_{0}}{j_{0}\theta-\frac{6d}{p}}}<\frac{1}{4}.
\end{align}
On the other hand, combining \eqref{DM683}, \eqref{QM86}, \eqref{QM666} and Young's inequality, we obtain
\begin{align*}
&\int_{B_{26 r_{j_{0}}}}p\eta^{p-1}(\tilde{u}_{1}-\lambda)\langle\boldsymbol{\mathcal{\tilde{A}}}(\tilde{z},\nabla_{\tilde{z}}\tilde{u}_{1}),\nabla_{\tilde{z}}\tilde{u}_{1}\rangle\notag\\
&\leq\int_{B_{26r_{j_{0}}}}p\eta^{p-1}|\tilde{u}_{1}-\lambda||\nabla\eta|(|\boldsymbol{\mathcal{\tilde{A}}}(\tilde{z},\nabla_{\tilde{z}}\tilde{u}_{1})-|\nabla_{\tilde{z}}\tilde{u}_{1}|^{p-2}\nabla_{\tilde{z}}\tilde{u}_{1}|+|\nabla_{\tilde{z}}\tilde{u}_{1}|^{p-1})\notag\\
&\leq\frac{5p}{26r_{j_{0}}}\int_{B_{26r_{j_{0}}}}\eta^{p-1}|\nabla_{\tilde{z}}\tilde{u}_{1}|^{p-1}|\tilde{u}_{1}-\lambda|\notag\\
&\leq\frac{1}{4}\int_{B_{26r_{j_{0}}}}\eta^{p}|\nabla_{\tilde{z}}\tilde{u}_{1}|^{p}+\frac{C(p)}{r_{j_{0}}^{p}}\int_{B_{26r_{j_{0}}}}|\tilde{u}_{1}-\lambda|^{p}.
\end{align*}
A consequence of these above results gives the Caccioppoli inequality as follows:
\begin{align*}
\dashint_{B_{13r_{j_{0}}}}|\nabla_{\tilde{z}}\tilde{u}_{1}|^{p}\leq\frac{C(p)}{r^{p}_{j_{0}}}\int_{B_{26r_{j_{0}}}}|\tilde{u}_{1}-\lambda|^{p},
\end{align*}
which, in combination with \eqref{AM9102}, \eqref{W860} and taking $\lambda=(\tilde{u}_{1})_{B_{26r_{j_{0}}}}$, shows that if $0<R_{0}\leq\min\limits_{1\leq i\leq3}R_{0,i}$,
\begin{align*}
|\nabla u(x_{0})|\leq C\delta_{0}^{1/m}\mathop{osc}\limits_{\Omega_{x',\varrho}}u,\quad\varrho=\frac{\tilde{c}_{0}}{3}\delta_{0}^{1/m},
\end{align*}
where $C=C(d,m,p,\kappa_{1},\kappa_{3},\kappa_{4})$.

\end{proof}

\section{Improved pointwise upper bound on the gradient}\label{Sec903}

Before proving Theorem \ref{thm002}, we first briefly expound the idea and techniques developed in \cite{DYZ2023,LY202102}. The key to improving the gradient estimate obtained in Theorem \ref{thm001} lies in establishing the decay estimate for the oscillation of $u$ in small narrow regions. As shown in \cite{LY202102}, for the linear elliptic equation of divergence form, the problem is directly reduced to the establishment of the De Giorgi-Nash-Moser Harnack inequality for the solution to the transformed and extended equation for the purpose of ensuring that the constant in Harnack inequality is independent of the height of the narrow region. However, as pointed out in \cite{DYZ2023}, the situation becomes fairly different for the nonlinear $p$-Laplace equations including the singular case of $1<p<2$ and the degenerate case of $p>2$. In the nonlinear case, it needs to establish the Krylov-Safonov Harnack inequality for the solution to the normalized $p$-Laplace equation after the transform and extension, which can be actually regarded as a second-order uniformly elliptic equation of non-divergence form.
For any $\varsigma>0,$ we start by considering the following approximation equation
\begin{align}\label{APPRO}
\begin{cases}
-\mathrm{div}((\varsigma+|\nabla u_{\varsigma}|^{2})^{\frac{p-2}{2}}\nabla u_{\varsigma})=0,&\mathrm{in}\;\Omega,\\
\frac{\partial u_{\varsigma}}{\partial\nu}=0,&\mathrm{on}\;\partial D_{i},\;i=1,2,\\
u_{\varsigma}=\varphi,&\mathrm{on}\;\partial D.
\end{cases}
\end{align}
Note that $\|u_{\varsigma}\|_{C^{1,\alpha}(\Omega_{2R_{0}})}$ stays bounded independent of $\varsigma$ for some $0<\alpha<1$. Then we only need to prove Theorem \ref{thm002} for $u_{\varsigma}$. Furthermore, the weak solution $u_{\varsigma}$ of equation \eqref{APPRO} possesses better regularity than that of the original problem \eqref{con002} and is in the class $C^{2}$, see \cite{D1983}. This means that $u_{\varsigma}$ will be the solution of normalized $p$-Laplace equation in the classical sense. For simplicity, we still denote $u=u_{\varsigma}$ in the following proof. Rewrite equation \eqref{APPRO} into the following normalized $p$-Laplace equation
\begin{align}\label{E901}
a_{ij}\partial_{ij}u=0,\quad\mathrm{in}\;\Omega_{2R_{0}},
\end{align}
where
\begin{align}\label{E902}
a_{ij}=\delta_{ij}+(p-2)(\varsigma+|\nabla u|^{2})^{-1}\partial_{i}u\partial_{j}u
\end{align}
satisfies uniformly elliptic condition
\begin{align}\label{U9}
\min\{1,p-1\}|\xi|^{2}\leq a_{ij}\xi_{i}\xi_{j}\leq\max\{1,p-1\}|\xi|^{2},\quad\forall\,\xi\in\mathbb{R}^{d},\;p>1.
\end{align}
Here, for $i,j=1,2,...,d,$ $\delta_{ij}=1$ if $i=j$, while $\delta_{ij}=0$ if $i\neq j.$

Pick $r\in(2^{-1}(\varepsilon\kappa_{2}^{-1})^{1/m},r_{0}]$, where $0<r_{0}\leq R_{0}$ is independent of $\varepsilon$ and will be determined later. For $i=1,2,$ denote
\begin{align*}
\tilde{h}_{i}(x'):=
\begin{cases}
h_{i}(x'),&\text{for }|x'|\leq2r_{0},\\
0,&\text{for }|x'|>2r_{0}.
\end{cases}
\end{align*}
For $s,t>0$, let $Q_{s,t}$ be a cylinder defined by \eqref{CI90} above. For brevity, denote
\begin{align*}
\tilde{\delta}=\tilde{\delta}(y')=\varepsilon+\tilde{h}_{1}(y')-\tilde{h}_{2}(y'),
\end{align*}
for any given $|y'|\leq2r_{0}$. For $y\in Q_{2r,r^{m}}\setminus Q_{r/4,r^{m}}$, introduce the map $x=\Phi(y)$ as follows:
\begin{align}\label{QD01}
\begin{cases}
x'=y'-g(y),\\
x_{d}=\frac{1}{2}(y_{d}r^{-m}\tilde{\delta}+\tilde{h}_{1}(y')-\tilde{h}_{2}(y')),
\end{cases}
\end{align}
where $g(y):=(g_{1}(y),...,g_{d-1}(y))$ is defined by
\begin{align*}
g(y)=&(y_{d}^{2}-r^{m})(y_{d}+r^{m})f(y),\\
f(y)=&\frac{\tilde{\delta}}{8r^{3m}}
\begin{cases}
y_{d}\nabla_{y'}(\tilde{h}_{1}^{\tau}(y')+\tilde{h}_{2}^{\tau}(y'))+r^{m}\nabla_{y'}(\tilde{h}_{1}^{\tau}(y')-\tilde{h}_{2}^{\tau}(y')),&\text{if}\;2\leq m<3,\\
y_{d}\nabla_{y'}(\tilde{h}_{1}(y')+\tilde{h}_{2}(y'))+r^{m}\nabla_{y'}(\tilde{h}_{1}(y')-\tilde{h}_{2}(y')),&\text{if}\;m\geq3.
\end{cases}
\end{align*}
Here $\tilde{h}_{i}^{\tau}$ is a mollification of $\tilde{h}_{i}$ defined by
\begin{align}\label{MO9}
\tilde{h}_{i}^{\tau}(y')=\int_{\mathbb{R}^{d-1}}\tilde{h}_{i}(y'-\tau z')\varphi(z')dz',\quad\tau=\frac{r^{2m}-y_{d}^{2}}{r^{m-1}},\;i=1,2,
\end{align}
where $\varphi$ is the standard mollifier, that is, a positive $C^{\infty}$ function with unit integration and its support contained in $B_{1}'$. The objectives for the introduction of function $g$ are two-fold: on one hand, it makes its inverse transform $\Phi^{-1}$ preserve the flattening property that $\Phi^{-1}$ maps the top and bottom boundaries of $\Omega_{2r}\setminus\Omega_{r/4}$ onto the upper and lower boundaries of $Q_{2r,r^{m}}\setminus Q_{r/4,r^{m}}$; on the other hand, it ensures that the solution of the transformed equation still satisfies the Neumann boundary condition on the upper and lower boundaries $\{y_{d}=\pm r^{m}\}$. In addition, the introduction of mollification in the case of $2\leq m<3$ is to make $\Phi$ possess second-order derivatives, which is necessary to keep uniform ellipticity of the coefficients for the transformed equation.

\begin{lemma}\label{ZWQ08}
Assume as above. Then we obtain that if $2^{-1}(\varepsilon\kappa_{2}^{-1})^{1/m}<r\leq\min\limits_{1\leq i\leq3}r_{0,i}$ with $r_{0,i},\, i=1,2,3$ defined by \eqref{Z016}--\eqref{Z006},

$(i)$ there holds
\begin{align}\label{WA608}
\begin{cases}
Q_{1.9r,r^{m}}\setminus Q_{0.35r,r^{m}}\subset\Phi^{-1}(\Omega_{2r}\setminus\Omega_{r/4}),\\
\Omega_{r}\setminus\Omega_{r/2}\subset\Phi(Q_{1.1r,r^{m}}\setminus Q_{0.4r,r^{m}}),
\end{cases}
\end{align}
and, for $y\in Q_{2r,r^{m}}\setminus Q_{r/4,r^{m}}$,
\begin{align}\label{WA908}
\frac{I_{d}}{\mathcal{C}_{0}}\leq\nabla_{y}\Phi(y)\leq\mathcal{C}_{0}I_{d},
\end{align}
where
\begin{align}\label{MA90}
\mathcal{C}_{0}=\max\{2(\min\{1,4^{-(m+1)}\kappa_{1}\})^{-1},2^{-1}\min\{1,4^{-(m+1)}\kappa_{1}\}+\max\{1,2^{m}\kappa_{2}\}\};
\end{align}

$(ii)$ if $u\in W^{1,p}(\Omega_{2r}\setminus\Omega_{r/4})$ satisfies
\begin{align}\label{EQU908}
\begin{cases}
-\mathrm{div}((\varsigma+|\nabla u|^{2})^{\frac{p-2}{2}}\nabla u)=0,&\mathrm{in}\;\Omega_{2r}\setminus\Omega_{r/4},\\
\frac{\partial u}{\partial\nu}=0,&\mathrm{on}\;\overline{\Omega_{2r}\setminus\Omega_{r/4}}\cap\Gamma^{\pm}_{2r_{0}},
\end{cases}
\end{align}
for some $\varsigma>0$, then $v(y):=u(\Phi(y))$ solves
\begin{align}\label{V90}
\begin{cases}
\tilde{a}_{ij}\partial_{ij}v(y)+\tilde{b}_{i}\partial_{i}v(y)=0,&\mathrm{in}\;Q_{1.9r,r^{m}}\setminus Q_{0.35r,r^{m}},\\
\frac{\partial v}{\partial\nu}=0,&\mathrm{on}\;\{y_{d}=\pm r^{m}\},
\end{cases}
\end{align}
with
\begin{align*}
\frac{I_{d}}{C_{1}}\leq\tilde{a}\leq C_{1}I_{d},\quad|\tilde{b}|\leq\frac{C_{2}}{r},
\end{align*}
where $I_{d}$ represents the $d\times d$ identity matrix, $C_{1}=C_{1}(d,m,p,\kappa_{1},\kappa_{2})$, and $C_{2}=C_{2}(d,m,p,\kappa_{1},\kappa_{2},\kappa_{3},\kappa_{4}).$

\end{lemma}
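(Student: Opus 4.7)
The plan is to verify each conclusion by a direct computation exploiting the explicit form of $\Phi$, and two size comparisons that hold throughout $Q_{2r,r^m}\setminus Q_{r/4,r^m}$: first, the hypothesis $r>2^{-1}(\varepsilon\kappa_{2}^{-1})^{1/m}$ gives $\varepsilon\lesssim r^{m}$; second, since $|y'|\geq r/4$ in this shell, condition $(\mathbf{H1})$ yields $\tilde\delta(y')\in[\kappa_{1}(r/4)^{m},\,\varepsilon+\kappa_{2}(2r)^{m}]\asymp r^{m}$. Together these two comparabilities drive the entire argument.

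For part (i), I would first note the boundary alignment: on $y_{d}=\pm r^{m}$ the cofactor in $g$ vanishes, so $g\equiv 0$, hence $x'=y'$, while $x_{d}=\pm\varepsilon/2+\tilde h_{i}(y')$ (with $i=1$ on top and $i=2$ on bottom). Thus $\Phi$ maps the top and bottom faces of the cylinder exactly onto $\Gamma^{\pm}_{2r_{0}}$. To establish the inclusions in \eqref{WA608} it then suffices to bound the horizontal displacement $|x'-y'|=|g(y)|$; using $(\mathbf{H2})$, $(\mathbf{H3})$ together with $\tilde\delta\lesssim r^{m}$ yields $|g|\ll r$ once $r\leq r_{0,2}$, and the explicit formula \eqref{Z018} is calibrated so that the resulting containments fit with the radii $0.35r,\,0.4r,\,1.1r,\,1.9r$. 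For the Jacobian \eqref{WA908}, I would write $\nabla_{y}\Phi$ as a block matrix: the upper-left $(d-1)\times(d-1)$ block is $I_{d-1}-\nabla_{y'}g$, the last column contains $-\partial_{y_{d}}g$ in the first $d-1$ rows and $\partial_{y_{d}}x_{d}=\tilde\delta/(2r^{m})$ in the $(d,d)$-slot. The off-diagonal entries are controlled by $C\kappa_{3}r^{m-1}\ll 1$ using $(\mathbf{H2})$, while the $(d,d)$-entry lies in $[\kappa_{1}/(2\cdot 4^{m}),\,2^{m-1}\kappa_{2}]$ by the $\tilde\delta\asymp r^{m}$ comparison; the constant $\mathcal{C}_{0}$ in \eqref{MA90} records precisely these bounds.

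For part (ii), I would rewrite \eqref{EQU908} in the normalized non-divergence form \eqref{E901}-\eqref{E902}, which is uniformly elliptic by \eqref{U9}. The chain rule applied to $v(y)=u(\Phi(y))$ produces $\tilde a_{ij}\partial_{ij}v+\tilde b_{i}\partial_{i}v=0$ with $\tilde a_{ij}=a_{kl}(\nabla\Phi^{-1})_{ki}(\nabla\Phi^{-1})_{lj}$ and $\tilde b$ absorbing the second derivatives $\partial_{ij}\Phi^{k}$. The ellipticity bounds for $\tilde a$ follow from \eqref{U9} and \eqref{WA908}, with constant controlled by $\mathcal{C}_{0}^{2}\max\{1,p-1\}/\min\{1,p-1\}$. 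The bound $|\tilde b|\leq C/r$ reduces to $|\nabla^{2}_{y}\Phi|\lesssim r^{-1}$; for $m\geq 3$ this is immediate from $(\mathbf{H3})$, but for $2\leq m<3$ the raw $\nabla^{2}h_{i}$ only has $C^{m-2}$ regularity, which is why one mollifies at scale $\tau\asymp r^{m-1}$ as in \eqref{MO9}. A standard mollifier estimate then gives $|\nabla^{2}\tilde h_{i}^{\tau}|\lesssim \tau^{m-3}$, which after multiplication by the vanishing cofactor of size $\sim r^{3m}$ produces the desired $O(r^{-1})$ second-derivative bound on $g$. Finally, the Neumann condition transfers because the gradient structure built into $f$ is engineered so that at $y_{d}=\pm r^{m}$ the vector $\partial_{y_{d}}\Phi$ is parallel to the conormal $(\mp\nabla h_{i},1)$ at the image point; hence $\partial_{y_{d}}v(y)=\nabla u(x)\cdot\partial_{y_{d}}\Phi(y)=0$ on $\{y_{d}=\pm r^{m}\}$.

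The hard part will be the Neumann-preservation verification: one must check, coefficient by coefficient, that at $y_{d}=\pm r^{m}$ the vector $-\partial_{y_{d}}g$ equals $\tfrac{\tilde\delta}{2r^{m}}\nabla_{y'}\tilde h_{i}^{\tau}$ (for $i=1,2$), which is precisely what forces the specific algebraic form of $f$ involving the combination $y_{d}\nabla_{y'}(\tilde h_{1}^{\tau}+\tilde h_{2}^{\tau})+r^{m}\nabla_{y'}(\tilde h_{1}^{\tau}-\tilde h_{2}^{\tau})$. A secondary but technically heavy point is that the same choice of $f$ must simultaneously yield $|\nabla^{2}g|\lesssim r^{-1}$ with constants depending only on $d,m,\kappa_{i}$; this is the reason for the separate cases $2\leq m<3$ and $m\geq 3$ in the definition of $f$, and it is what pins down the explicit exponent in \eqref{Z018}. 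Tracking the constants through these estimates is mostly bookkeeping but is what finally fixes the thresholds \eqref{Z016}-\eqref{Z006}.
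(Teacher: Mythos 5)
Your proposal tracks the paper's own proof quite closely: both rely on the two size comparisons $\varepsilon\lesssim r^{m}$ and $\tilde\delta\asymp r^{m}$ on the annulus, both control the Jacobian by comparing $\nabla_{y}\Phi$ to the block matrix $B=\operatorname{diag}(I_{d-1},\tilde\delta/(2r^{m}))$, both handle $\tilde b$ via the mollification in the case $2\leq m<3$, and both identify the Neumann-preservation mechanism as requiring $\partial_{y_{d}}\Phi$ to be parallel to the conormal of $\Gamma^{\pm}$ at the image point. This is the paper's argument in essentially the same order.

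A few bookkeeping slips worth flagging. First, the Neumann-alignment identity has a sign error: since $\partial_{y_{d}}\Phi=(-\partial_{y_{d}}g,\,\tilde\delta/(2r^{m}))$ must be parallel to $(-\nabla_{x'}\tilde h_{j},\,1)$ at $y_{d}=(-1)^{j+1}r^{m}$, the condition is $\partial_{y_{d}}g=\tfrac{\tilde\delta}{2r^{m}}\nabla_{y'}\tilde h_{j}$ (with $\nabla_{y'}\tilde h_{j}^{\tau}=\nabla_{y'}\tilde h_{j}$ there since $\tau=0$), not $-\partial_{y_{d}}g=\tfrac{\tilde\delta}{2r^{m}}\nabla_{y'}\tilde h_{j}^{\tau}$. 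Second, for $2\leq m<3$ the function $\tilde h_{i}$ is $C^{2,m-2}$, so $\nabla^{2}\tilde h_{i}^{\tau}$ is already bounded by $\kappa_{4}$; what blows up and needs the mollification is the \emph{third} derivative, and the paper's estimate is the cruder $|\nabla^{3}\tilde h_{i}^{\tau}|\lesssim 1/\tau=r^{m-1}/(r^{2m}-y_{d}^{2})$ obtained purely by integration by parts using $\|\nabla^{2}\tilde h_{i}\|_{L^{\infty}}\leq\kappa_{4}$, not a H\"older-seminorm bound of the form $\tau^{m-3}$. Third, the vanishing cofactor you should track is $|y_{d}^{2}-r^{2m}|$ (of size at most $r^{2m}$), which exactly cancels the $1/\tau$ singularity: combined with $\tilde\delta/(8r^{3m})\sim r^{-2m}$ and the $|y_{d}|\lesssim r^{m}$ factor in $F$, this yields the asserted $|\nabla^{2}_{y}g|\lesssim r^{-1}$. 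None of these affect the validity of the overall scheme.
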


\begin{proof}
{\bf Step 1.} To begin with, it follows from condition ({\bf{H2}}) that for $y\in Q_{2r,r^{m}}$ and $i=1,2,$ we have $|\nabla_{y'}\tilde{h}_{i}|\leq2^{m-1}\kappa_{3}r^{m-1}$, and for $j=1,...,d-1$, if $r\leq2^{1/m}$,
\begin{align}\label{QME321}
|\partial_{y_{j}}\tilde{h}_{i}^{\tau}(y')|\leq&\int_{\mathbb{R}^{d-1}}|\partial_{y_{j}}\tilde{h}_{i}(y'-\tau z')|\varphi(z')dz'\leq\kappa_{3}(|y'|+\tau)^{m-1}\notag\\
\leq&\kappa_{3}(2r+r^{m+1})^{m-1}\leq3^{m-1}\kappa_{3}r^{m-1}.
\end{align}
Then we obtain that for $r\in\big(2^{-1}(\varepsilon\kappa_{2}^{-1})^{1/m},\min\limits_{1\leq i\leq2}r_{0,i}\big]$ and $y\in Q_{2r,r^{m}}$,

$(i)$ if $2\leq m<3$,
\begin{align*}
|g(y)|\leq2^{m-1}\kappa_{2}r^{m}\big(|\nabla_{y'}\tilde{h}_{1}^{\tau}|+|\nabla_{y'}\tilde{h}_{2}^{\tau}|\big)\leq6^{m}(d-1)\kappa_{2}\kappa_{3}r^{2m-1}<\frac{r}{10};
\end{align*}

$(ii)$ if $m\geq3$,
\begin{align*}
|g(y)|\leq2^{m-1}\kappa_{2}r^{m}\big(|\nabla_{y'}\tilde{h}_{1}|+|\nabla_{y'}\tilde{h}_{2}|\big)\leq4^{m-1}\kappa_{2}\kappa_{3}r^{2m-1}<\frac{r}{10}.
\end{align*}
Then we obtain that \eqref{WA608} holds.

Using condition ({\bf{H3}}), we obtain that for $i=1,2,$ and $j,l=1,...,d-1,$ if $r\leq\frac{2}{3}R_{0}$,
\begin{align*}
|\partial_{y_{j}y_{l}}\tilde{h}_{i}(y')|\leq\kappa_{4},\quad|\partial_{y_{j}y_{l}}\tilde{h}_{i}^{\tau}(y')|\leq&\int_{\mathbb{R}^{d-1}}|\partial_{y_{j}y_{l}}\tilde{h}_{i}(y'-\tau z')|\varphi(z')dz'\leq\kappa_{4},
\end{align*}
which, together with \eqref{QME321}, leads to that for $y\in Q_{2r,r^{m}}$ and $j,l=1,...,d-1,$ if $2^{-1}(\varepsilon\kappa_{2}^{-1})^{1/m}<r\leq\min\{2^{1/m},\frac{2}{3}R_{0}\}$,

$(i)$ for $2\leq m<3$,
\begin{align*}
|\nabla_{y_{j}}g_{l}(y)|\leq&\frac{1}{4}\bigg(|\partial_{y_{j}}\tilde{\delta}|\sum^{2}_{i=1}|\partial_{y_{l}}\tilde{h}_{i}^{\tau}|+\tilde{\delta}\sum^{2}_{i=1}|\partial_{y_{j}y_{l}}\tilde{h}^{\tau}_{i}|\bigg)\notag\\
\leq&6^{m-1}\kappa_{3}^{2}r^{2m-2}+2^{m}\kappa_{2}\kappa_{4}r^{m}\leq (6^{m}\kappa_{3}^{2}+2^{m}\kappa_{2}\kappa_{4})r^{m},
\end{align*}
and thus,
\begin{align}\label{D015}
|\nabla_{y'}g(y)|\leq(d-1)^{2}(6^{m}\kappa_{3}^{2}+2^{m}\kappa_{2}\kappa_{4})r^{m};
\end{align}

$(ii)$ for $m\geq3$,
\begin{align*}
|\nabla_{y'}g(y)|\leq&\frac{1}{4}\sqrt{\sum^{d-1}_{j,l=1}\left(|\partial_{y_{j}}\tilde{\delta}|\sum^{2}_{i=1}|\partial_{y_{l}}\tilde{h}_{i}|+\tilde{\delta}\sum^{2}_{i=1}|\partial_{y_{j}y_{l}}\tilde{h}_{i}|\right)^{2}}\notag\\
\leq&\frac{1}{2}\sqrt{|\nabla_{y'}\tilde{\delta}|^{2}\sum^{2}_{i=1}|\nabla_{y'}\tilde{h}_{i}|^{2}+\tilde{\delta}^{2}\sum^{2}_{i=1}|\nabla_{y'}^{2}\tilde{h}_{i}|^{2}}\notag\\
\leq&\frac{1}{2}\sqrt{(2^{2m-1}\kappa_{3}^{2}r^{2m-2})^{2}+(2^{m+1}\kappa_{2}\kappa_{4}r^{m})^{2}}\notag\\
\leq&2^{m}(2^{m-1}\kappa_{3}^{2}+\kappa_{2}\kappa_{4})r^{m},
\end{align*}
which, together with \eqref{D015}, reads that
\begin{align}\label{D106}
&|\nabla_{y'}x'-I_{d-1}|\notag\\
&=|\nabla_{y'}g(y)|\leq
\begin{cases}
(d-1)^{2}(6^{m}\kappa_{3}^{2}+2^{m}\kappa_{2}\kappa_{4})r^{m},&\text{if }2\leq m<3,\\
2^{m}(2^{m-1}\kappa_{3}^{2}+\kappa_{2}\kappa_{4})r^{m},&\text{if }m\geq3.
\end{cases}
\end{align}

Similarly as above, we obtain that for $y\in Q_{2r,r^{m}}$, $i=1,2,$ $l=1,...,d-1$,
\begin{align*}
|\partial_{y_{l}y_{d}}\tilde{h}_{i}^{\tau}|\leq&\int_{\mathbb{R}^{d}}\Big|\frac{\partial\tau}{\partial y_{d}}\Big||\partial_{y_{j}y_{l}}\tilde{h}_{i}(y'-\tau z')||z'|\varphi(z')dz'\leq2\kappa_{4}r.
\end{align*}
Then we obtain that for $y\in Q_{2r,r^{m}}$ and $l=1,...,d-1$, if $2^{-1}(\varepsilon\kappa_{2}^{-1})^{1/m}<r\leq\min\{2^{1/m},\frac{2}{3}R_{0}\}$,

$(i)$ for $2\leq m<3$,
\begin{align*}
|\partial_{y_{d}}g_{l}(y)|\leq&2^{m+1}\kappa_{2}\sum^{2}_{i=1}(r^{m}|\partial_{y_{d}y_{l}}\tilde{h}_{i}^{\tau}|+|\partial_{y_{l}}\tilde{h}_{i}^{\tau}|)\notag\\
\leq&2^{m+2}\kappa_{2}(3^{m-1}\kappa_{3}+4\kappa_{4})r^{m-1},
\end{align*}
and then
\begin{align}\label{D018}
|\partial_{y_{d}}g(y)|\leq2^{m+2}(d-1)\kappa_{2}(3^{m-1}\kappa_{3}+4\kappa_{4})r^{m-1};
\end{align}

$(ii)$ for $m\geq3$,
\begin{align*}
|\partial_{y_{d}}g(y)|\leq&2^{m-2}\kappa_{2}\sum^{2}_{i=1}|\nabla_{y'}\tilde{h}_{i}|\leq4^{m-1}\kappa_{2}\kappa_{3}r^{m-1}.
\end{align*}
This, in combination with \eqref{D018}, gives that
\begin{align}\label{D1095}
&|\partial_{y_{d}}x'|=|\partial_{y_{d}}g(y)|\leq
\begin{cases}
2^{m+2}(d-1)\kappa_{2}(3^{m-1}\kappa_{3}+4\kappa_{4})r^{m-1},&\text{if }2\leq m<3,\\
4^{m-1}\kappa_{2}\kappa_{3}r^{m-1},&\text{if }m\geq3.
\end{cases}
\end{align}
From conditions ({\bf{H1}})--({\bf{H2}}), we obtain that for $y\in Q_{2r,r^{m}}\setminus Q_{r/4,r^{m}}$,
\begin{align}\label{D023}
|\nabla_{y'}x_{d}|\leq\sum^{2}_{i=1}|\nabla_{y'}\tilde{h}_{i}|\leq2^{m}\kappa_{3}r^{m-1},
\end{align}
and
\begin{align}\label{D026}
\frac{\kappa_{1}}{4^{m+1}}\leq\partial_{y_{d}}x_{d}=\frac{\tilde{\delta}}{2r^{m}}\leq2^{m}\kappa_{2}.
\end{align}
Define a matrix as follows:
\begin{align}\label{MAT01}
B=\begin{pmatrix}1&0&\cdots&0&0 \\ 0&1&\cdots&0&0\\ \vdots&\vdots&\ddots&\vdots&\vdots\\0&0&\cdots&1&0\\ 0&0&\cdots&0&\frac{\tilde{\delta}}{2r^{m}}.
\end{pmatrix}
\end{align}
Observe that
\begin{align*}
|\nabla_{y}x-B|\leq|\nabla_{y'}g(y)|+|\partial_{y_{d}}g(y)|+|\nabla_{y'}x_{d}|.
\end{align*}
Then combining \eqref{D106}, \eqref{D1095} and \eqref{D023}, we deduce that for $y\in Q_{2r,r^{m}}$, if $2^{-1}(\varepsilon\kappa_{2}^{-1})^{1/m}<r\leq r_{0,1}=\min\{2^{1/m},\frac{2}{3}R_{0}\}$,
\begin{align*}
|\nabla_{y}x-B|\leq
\begin{cases}
6^{m+1}(d-1)^{2}(\kappa_{2}(\kappa_{3}+\kappa_{4})+\kappa_{3}(\kappa_{3}+1))r^{m-1},&\text{if }2\leq m<3,\\
4^{m}(\kappa_{2}(\kappa_{3}+\kappa_{4})+\kappa_{3}(\kappa_{3}+1))r^{m-1},&\text{if }m\geq3.
\end{cases}
\end{align*}
If $r$ further satisfies $2^{-1}(\varepsilon\kappa_{2}^{-1})^{1/m}<r\leq\min\limits_{1\leq i\leq2}r_{0,i}$ with $r_{0,i},\, i=1,2$ given by \eqref{Z016}--\eqref{Z018}, we have
\begin{align}\label{ZQ039}
|\nabla_{y}x-B|\leq\frac{1}{2}\min\{1,4^{-(m+1)}\kappa_{1}\},
\end{align}
which leads to that for any $\xi\in\mathbb{R}^{d}$,
\begin{align*}
\xi^{T}\nabla_{y}x\xi=&\xi^{T}B\xi+\xi^{T}(\nabla_{y}x-B)\xi\geq|\xi'|^{2}+\partial_{y_{d}}x_{d}\xi_{d}^{2}-|\nabla_{y}x-B||\xi|^{2}\notag\\
\geq&\frac{1}{2}\min\{1,4^{-(m+1)}\kappa_{1}\}|\xi|^{2},
\end{align*}
and
\begin{align*}
\xi^{T}\nabla_{y}x\xi\leq&|\xi'|^{2}+\partial_{y_{d}}x_{d}\xi_{d}^{2}+|\nabla_{y}x-B||\xi|^{2}\notag\\
\leq&(2^{-1}\min\{1,4^{-(m+1)}\kappa_{1}\}+\max\{1,2^{m}\kappa_{2}\})|\xi|^{2}.
\end{align*}
These two inequalities imply that \eqref{WA908} holds.

{\bf Step 2.} Note that $u$ is in the class $C^{2}$. Then by the chain rule, we have
\begin{align*}
\partial_{x_{k}}u(x)=\partial_{y_{i}}v(y)\partial_{x_{k}}y_{i},\quad\partial_{x_{k}x_{l}}u(x)=\partial_{y_{i}y_{j}}v(y)\partial_{x_{k}}y_{i}\partial_{x_{l}}y_{j}+\partial_{y_{i}}v(y)\partial_{x_{k}x_{l}}y_{i}.
\end{align*}
Then we have from \eqref{E901}--\eqref{E902} that $v(y)$ solves
\begin{align*}
\tilde{a}_{ij}\partial_{ij}v(y)+\tilde{b}_{i}\partial_{i}v(y)=0,
\end{align*}
where $\tilde{a}_{ij}=a_{kl}\partial_{x_{k}}y_{i}\partial_{x_{l}}y_{j}$ and $\tilde{b}_{i}=a_{kl}\partial_{x_{k}x_{l}}y_{i}$. Using \eqref{WA908}, we also have
\begin{align*}
\frac{I_{d}}{\mathcal{C}_{0}}\leq\nabla_{x}y=\nabla_{x}\Phi^{-1}(x)\leq \mathcal{C}_{0}I_{d},
\end{align*}
where $\mathcal{C}_{0}$ is given by \eqref{MA90}. Denote
\begin{align*}
\mathcal{K}(y):=-2r^{m}\tilde{\delta}^{-1}\nabla_{y'}\tilde{h}_{2}-\tilde{\delta}^{-1}(y_{d}+r^{m})\nabla_{y'}\tilde{\delta}.
\end{align*}
If $2^{-1}(\varepsilon\kappa_{2}^{-1})^{1/m}<r\leq r_{0,3}$ with $r_{0,3}$ given by \eqref{Z006}, it then follows from conditions ({\bf{H1}})--({\bf{H2}}) that for $y\in Q_{2r,r^{m}}\setminus Q_{r/4,r^{m}}$,
\begin{align}\label{ZQ006}
|\mathcal{K}(y)|\leq2^{3m+1}\kappa_{1}^{-1}\kappa_{3}r^{m-1}\leq\frac{1}{4}.
\end{align}
In view of \eqref{QD01}, we deduce from \eqref{ZQ039} and \eqref{ZQ006} that if $2^{-1}(\varepsilon\kappa_{2}^{-1})^{1/m}<r\leq\min\limits_{1\leq i\leq3}r_{0,i}$ and $y\in Q_{2r,r^{m}}\setminus Q_{r/4,r^{m}}$,
\begin{align*}
|\nabla_{x}y|\leq|B^{-1}|+\big(|\nabla_{y}g(y)|+|\mathcal{K}(y)|\big)|\nabla_{x}y|\leq|B^{-1}|+\frac{3}{4}|\nabla_{x}y|,
\end{align*}
and thus,
\begin{align}\label{E96}
|\nabla_{x}y|\leq 4|B^{-1}|\leq4\sqrt{d-1+4r^{2m}\tilde{\delta}^{-2}}\leq4\sqrt{d-1+4^{2m+1}\kappa_{1}^{-2}},
\end{align}
where $B^{-1}$ is the inverse matrix of $B$ defined by \eqref{MAT01}. Then we have from \eqref{D026} and \eqref{ZQ039} that
\begin{align*}
|\nabla_{y}x|\leq|\nabla_{y}x-B|+|B|\leq\frac{1}{2}+\sqrt{d-1+4^{-1}r^{-2m}\tilde{\delta}^{2}}\leq\frac{1}{2}+\sqrt{d-1+2^{2m}\kappa_{2}^{2}},
\end{align*}
which gives that
\begin{align*}
|\xi|=|\nabla_{y}x\nabla_{x}y\xi|\leq C(d,m,\kappa_{2})|\nabla_{x}y\xi|,\quad\text{for any }\xi\in\mathbb{R}^{d}.
\end{align*}
This, together with \eqref{U9} and \eqref{E96}, reads that for any $\xi\in\mathbb{R}^{d}$,
\begin{align*}
\frac{1}{C_{1}}|\xi|^{2}\leq\min\{1,p-1\}|\nabla_{x}y\xi|^{2}\leq\xi_{i}\tilde{a}_{ij}\xi_{j}\leq\max\{1,p-1\}|\nabla_{x}y|^{2}|\xi|^{2}\leq C_{1}|\xi|^{2},
\end{align*}
where $C_{1}=C_{1}(d,m,p,\kappa_{1},\kappa_{2})$. That is, $\frac{I_{d}}{C_{1}}\leq\tilde{a}\leq C_{1}I_{d}$.

We now proceed to estimate $|\nabla_{x}^{2}y|$ for the purpose of estimating $|\tilde{b}|$. By differentiating $\partial_{x_{k}}y_{i}\partial_{y_{j}}x_{k}=\delta_{ij}$ in $x_{l}$, we have from the chain rule that
\begin{align*}
\partial_{x_{k}x_{l}}y_{i}\partial_{y_{j}}x_{k}+\partial_{x_{k}}y_{i}\partial_{x_{l}}y_{s}\partial_{y_{j}y_{s}}x_{k}=0.
\end{align*}
This, in combination with \eqref{WA908} and \eqref{E96}, reads that
\begin{align}\label{QE015}
|\nabla_{x}^{2}y|\leq C(d,m,\kappa_{1})|\nabla_{y}^{2}x|.
\end{align}
Using ({\bf{H2}})--({\bf{H3}}), we deduce that $|\nabla_{y}^{2}x_{d}|\leq C(m,\kappa_{3},\kappa_{4})r^{-1}$. It remains to estimate $|\nabla_{y}^{2}x'|$. When $m\geq3$, it follows from a direct computation that $|\nabla_{y}^{2}x'|\leq C(d,m,\kappa_{2},\kappa_{3},\kappa_{4})r^{-1}$. In the case when $2\leq m<3$, the key is to estimate the following terms
\begin{align*}
\nabla_{y'}^{3}\tilde{h}_{i}^{\tau}(y'),\quad\nabla_{y'}^{2}\partial_{y_{d}}\tilde{h}_{i}^{\tau}(y'),\quad\nabla_{y'}\partial_{y_{d}}^{2}\tilde{h}_{i}^{\tau}(y'),\quad i=1,2.
\end{align*}
Observe from integration by parts that for $i=1,2,$
\begin{align*}
\nabla_{y'}\tilde{h}_{i}^{\tau}=&\int_{\mathbb{R}^{d-1}}\nabla_{y'}\tilde{h}_{i}(y'-\tau z')\varphi(z')dz'=-\frac{1}{\tau}\int_{\mathbb{R}^{d-1}}\nabla_{z'}\tilde{h}_{i}(y'-\tau z')\varphi(z')dz'\notag\\
=&\frac{1}{\tau}\int_{\mathbb{R}^{d-1}}\tilde{h}_{i}(y'-\tau z')\nabla_{z'}\varphi(z')dz',
\end{align*}
and
\begin{align}\label{WEQ012}
\partial_{y_{d}}\tilde{h}_{i}^{\tau}=&\frac{2y_{d}}{r^{m-1}}\int_{\mathbb{R}^{d-1}}\nabla_{y'}\tilde{h}_{i}(y'-\tau z')\cdot z'\varphi(z')dz'\notag\\
=&-\frac{2y_{d}}{\tau r^{m-1}}\int_{\mathbb{R}^{d-1}}\nabla_{z'}\tilde{h}_{i}(y'-\tau z')\cdot z'\varphi(z')dz'\notag\\
=&\frac{2y_{d}}{\tau r^{m-1}}\int_{\mathbb{R}^{d-1}}\tilde{h}_{i}(y'-\tau z')\mathrm{div}_{z'}(z'\varphi(z'))dz'.
\end{align}
Then we obtain from condition ({\bf{H3}}) that if $2^{-1}(\varepsilon\kappa_{2}^{-1})^{1/m}<r\leq\min\limits_{1\leq i\leq3}r_{0,i}$ and $y\in Q_{2r,r^{m}}\setminus Q_{r/4,r^{m}}$, for $i=1,2,$
\begin{align*}
|\nabla_{y'}^{3}\tilde{h}_{i}^{\tau}|=\frac{1}{\tau}\left|\int_{\mathbb{R}^{d-1}}\nabla_{y'}^{2}\tilde{h}_{i}(y'-\tau z')\nabla_{z'}\varphi(z')dz'\right|\leq \frac{Cr^{m-1}}{r^{2m}-y_{d}^{2}},
\end{align*}
and
\begin{align*}
|\nabla_{y'}^{2}\partial_{y_{d}}\tilde{h}_{i}^{\tau}|=\left|\frac{2y_{d}}{\tau r^{m-1}}\int_{\mathbb{R}^{d-1}}\nabla_{y'}^{2}\tilde{h}_{i}(y'-\tau z')\mathrm{div}_{z'}(z'\varphi(z'))dz'\right|\leq\frac{Cr^{m}}{r^{2m}-y_{d}^{2}},
\end{align*}
where $C=C(d,m,\kappa_{4})$. By differentiating the last line of \eqref{WEQ012} in $y_{d}$, we obtain
\begin{align*}
\partial_{y_{d}}^{2}\tilde{h}_{i}^{\tau}=&\frac{2}{\tau r^{m-1}}\int_{\mathbb{R}^{d-1}}\tilde{h}_{i}(y'-\tau z')\mathrm{div}_{z'}(z'\varphi(z'))dz'\notag\\
&+\frac{4y_{d}^{2}}{\tau r^{2m-2}}\int_{\mathbb{R}^{d-1}}\nabla_{y'}\tilde{h}_{i}(y'-\tau z')\cdot z'\mathrm{div}_{z'}(z'\varphi(z'))dz'.
\end{align*}
Then we have
\begin{align*}
|\partial_{y_{d}}^{2}\tilde{h}_{i}^{\tau}|\leq&\frac{2}{r^{2m}-y_{d}^{2}}\left|\int_{\mathbb{R}^{d-1}}\nabla_{y'}\tilde{h}_{i}(y'-\tau z')\mathrm{div}_{z'}(z'\varphi(z'))dz'\right|\notag\\
&+\frac{4r^{m+1}}{r^{2m}-y_{d}^{2}}\left|\int_{\mathbb{R}^{d-1}}\nabla_{y'}^{2}\tilde{h}_{i}(y'-\tau z')\cdot z'\mathrm{div}_{z'}(z'\varphi(z'))dz'\right|\notag\\
\leq&\frac{Cr^{m-1}}{r^{2m}-y_{d}^{2}},
\end{align*}
where $C=C(d,m,\kappa_{3},\kappa_{4}).$ In light of these above results, it follows from a direct calculation that
\begin{align*}
|\nabla_{y}^{2}x|\leq C(d,m,\kappa_{2},\kappa_{3},\kappa_{4})r^{-1},
\end{align*}
which, together with \eqref{QE015}, yields that $|\tilde{b}|\leq C r^{-1}$, where $C$ depends only on $d,m$ and $\kappa_{i},$ $i=1,2,3,4.$

It remains to demonstrate that $\frac{\partial v}{\partial\nu}=0$ on $\{y_{d}=\pm r^{m}\}$. For that purpose, it suffices to explain the construction of $g(y)$ in \eqref{QD01}. For $i=1,...,d-1,$ let
\begin{align*}
g_{i}(y)=(y_{d}^{2}-r^{2m})(\beta_{i}(y')y_{d}+\gamma_{i}(y')),
\end{align*}
where $\beta_{i}(y')$ and $\gamma_{i}(y')$ are constructed to satisfy the Neumann boundary condition. From the chain rule, we know
\begin{align*}
\frac{\partial v}{\partial\nu}=\nabla v\cdot e_{d}=\nabla_{x}u(x)\cdot\nabla_{y}\Phi(y)e_{d},\quad\mathrm{on}\;\{y_{d}=\pm r^{m}\},
\end{align*}
where $e_{d}=(0',1).$ So in order to let $\frac{\partial v}{\partial\nu}=0$ on $\{y_{d}=\pm r^{m}\}$, it requires that $\nabla_{y}\Phi(y)e_{d}\pll(-\nabla_{x'}\tilde{h}_{j},1)$ on $\{y_{d}=(-1)^{j+1}r^{m}\}$ for $j=1,2.$ This means that for $i=1,...,d-1$ and $j=1,2,$
\begin{align*}
2r^{2m}\beta_{i}(y')+(-1)^{j+1}2r^{m}\gamma_{i}(y')=\frac{\tilde{\delta}}{2r^{m}}\partial_{y_{i}}\tilde{h}_{j},\quad\mathrm{on}\;\{y_{d}=(-1)^{j+1}r^{m}\},
\end{align*}
where we used the fact that $x'=y'$ on $\{y_{d}=\pm r^{m}\}$. This leads to that
\begin{align*}
\beta_{i}(y')=\frac{\tilde{\delta}}{8r^{3m}}\partial_{y_{i}}(\tilde{h}_{1}(y')+\tilde{h}_{2}(y')),\quad\gamma_{i}(y')=\frac{\tilde{\delta}}{8r^{2m}}\partial_{y_{i}}(\tilde{h}_{1}(y')-\tilde{h}_{2}(y')).
\end{align*}
From the above proof, we see that the third-order derivatives of $\tilde{h}_{1}$ and $\tilde{h}_{2}$ are involved. Hence, we replace $\tilde{h}_{i}$ with its mollification $\tilde{h}_{i}^{\tau}$ in the case of $2\leq m<3$ for $i=1,2$, as shown in \eqref{MO9}. The proof is complete.

\end{proof}

In view of Lemma \ref{ZWQ08} and using the proof of Lemma 3.2 in \cite{DYZ2023} with minor modification, we obtain the Krylov-Safonov Harnack inequality for the solution $u$ to equation \eqref{EQU908} as follows.
\begin{corollary}\label{COR09}
Let $d\geq3$ and $2^{-1}(\varepsilon\kappa_{2}^{-1})^{1/m}<r\leq\min\limits_{1\leq i\leq3}r_{0,i}$ with $r_{0,i},\, i=1,2,3$ defined by \eqref{Z016}--\eqref{Z006}. Assume that $u\in W^{1,p}(\Omega_{2r}\setminus\Omega_{r/4})$ is a nonnegative solution to \eqref{EQU908} with some $\varsigma>0$. Then we have
\begin{align}\label{ZA026}
\sup\limits_{\Omega_{r}\setminus\Omega_{r/2}}u\leq C\inf\limits_{\Omega_{r}\setminus\Omega_{r/2}}u,
\end{align}
where $C=C(d,m,p,\kappa_{1},\kappa_{2},\kappa_{3},\kappa_{4}).$
\end{corollary}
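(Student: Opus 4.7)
The plan is to transfer the problem to the flattened coordinates provided by Lemma \ref{ZWQ08}, extend $v=u\circ\Phi$ by even reflection across the Neumann boundaries $\{y_d=\pm r^m\}$, and then apply the classical Krylov--Safonov Harnack inequality for uniformly elliptic equations in non-divergence form, chained through the (connected, for $d\ge3$) annular target region.

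First I would set $v(y)=u(\Phi(y))$, where $\Phi$ is given by \eqref{QD01}. Part (ii) of Lemma \ref{ZWQ08} shows that $v$ satisfies the uniformly elliptic non-divergence equation $\tilde a_{ij}\partial_{ij}v+\tilde b_i\partial_i v=0$ on $Q_{1.9r,r^m}\setminus Q_{0.35r,r^m}$, with ellipticity constants and drift bound $|\tilde b|\le C_2/r$ depending only on $d,m,p,\kappa_1,\kappa_2,\kappa_3,\kappa_4$, together with $\partial v/\partial\nu=0$ on $\{y_d=\pm r^m\}$. The inclusion \eqref{WA608} yields $\Omega_r\setminus\Omega_{r/2}\subset\Phi(Q_{1.1r,r^m}\setminus Q_{0.4r,r^m})$, so it suffices to establish the Harnack inequality for $v$ on the cylindrical annulus $Q_{1.1r,r^m}\setminus Q_{0.4r,r^m}$.

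Next I would extend $v$ by iterated even reflection across the hyperplanes $\{y_d=(2k+1)r^m\}$, $k\in\mathbb{Z}$. This extension is compatible with the conormal boundary condition and produces a function satisfying a uniformly elliptic equation of the same type on $Q_{1.9r,T}\setminus Q_{0.35r,T}$ for any chosen $T$, with identical ellipticity and drift bounds; the coefficients pick up jumps across the reflection hyperplanes but remain bounded and measurable, which suffices for Krylov--Safonov. Taking $T$ of order $r$ and rescaling via $z=y/r$, $w(z)=v(rz)$, the equation becomes $\tilde a_{ij}(rz)\partial_{z_iz_j}w+r\tilde b_i(rz)\partial_{z_i}w=0$ on a fixed cylindrical annulus, in which the drift is now bounded uniformly in $r$.

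Finally I would invoke the Krylov--Safonov Harnack inequality on this rescaled equation. For $d\ge3$ the rescaled target region is a connected Lipschitz set that, after the reflections, can be covered by a number of balls (depending only on $d,m,p,\kappa_i$) each compactly contained in the enlarged slab. Applying the Harnack inequality on each ball—with constant controlled by the ellipticity, the bounded drift, and the ball radius—and chaining along a path joining any two points of the target region yields $\sup w\le C\inf w$ on the target annulus. Transferring back through $\Phi$ and the rescaling then produces \eqref{ZA026}.

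The main obstacle is the anisotropy of the transformed domain: in the rescaled coordinates the $z_d$-direction has thickness $r^{m-1}\ll 1$ while the $z'$-extent is of order one, so a naive chain of isotropic balls of radius $r^{m-1}$ would need $O(r^{1-m})$ steps and produce a constant blowing up as $r\to0$. This is resolved by performing enough reflections in the $z_d$-direction to produce a slab thick enough to accommodate a bounded number of balls of fixed radius, and then chaining in the $z'$-direction. The hypothesis $d\ge3$ enters only at this chaining step: the horizontal cross-section $\{0.4<|z'|<1.1\}$ must be connected, which fails precisely when $d=2$, so the argument cannot dispense with the dimensional restriction.
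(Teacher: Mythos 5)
Your proposal follows essentially the same route as the paper's proof: flatten via Lemma \ref{ZWQ08}, extend $v$ by even reflection across $\{y_d=\pm r^m\}$ together with a $4r^m$-periodic extension in $y_d$ (equivalent to your iterated even reflections) so as to thicken the slab to height $\sim r$, rescale by $r$ to obtain bounded drift, and invoke Krylov--Safonov on the fixed cylindrical annulus $Q_{1.9,2}\setminus Q_{0.35,2}$, which is connected exactly because $d\ge3$. Your explanation of why the reflections are needed to overcome the $r^{m-1}$-anisotropy is the same reason the paper extends all the way to height $2r$ before rescaling.
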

For readers' convenience, we leave the proof of Corollary \ref{COR09} in the appendix.
\begin{remark}
We here would like to emphasize that the Harnack inequality gives a quantitative characterization in terms of small propagation property for the solution, which is critical to the establishment of oscillation decay estimate of the solution in the following.
\end{remark}
\begin{remark}\label{RE096}
By slightly modifying the proofs in Lemma \ref{ZWQ08} and Corollary \ref{COR09}, we obtain that \eqref{ZA026} also holds in $\Omega_{x',r}\setminus\Omega_{x',r/2}$ for any $x\in\Omega_{r}$, $0<r\leq\min\limits_{1\leq i\leq3}r_{0,i}.$
\end{remark}

\begin{lemma}\label{LEM903}
For $d\geq3$, let $u\in W^{1,p}(\Omega_{2R_{0}})$ be the solution to equation \eqref{APPRO} with some $\varsigma>0$. For any fixed $0<R_{0}\leq\min\limits_{1\leq i\leq3}R_{0,i}$ and $0<\beta<1$, if $0<r_{0}\leq\min\limits_{1\leq i\leq4}r_{0,i}$, then for a arbitrarily small $\varepsilon>0$ and $x\in\Omega_{r_{0}}$, there exists a positive constant $0<\gamma<1$ depending only on $d,m,p$ and $\kappa_{i}$, $i=1,2,3,4$ such that
\begin{align*}
\mathop{osc}_{\Omega_{x',\tilde{\varrho}}}u\leq2\tilde{\varrho}^{\beta\gamma}\mathop{osc}_{\Omega_{x',\tilde{\varrho}^{1-\beta}}}u,
\end{align*}
where $\tilde{\varrho}=\tilde{c}_{0}\delta^{1/m}.$

\end{lemma}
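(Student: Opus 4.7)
The plan is to iterate the Krylov--Safonov Harnack inequality of Corollary \ref{COR09} (via Remark \ref{RE096}, on off-center annuli $\Omega_{x',r}\setminus\Omega_{x',r/2}$) along a geometric chain of dyadic scales, and then to convert the iteration count into the explicit power of $\tilde\varrho$ appearing in the claim. For each $r$ in the admissible Harnack range $I:=\bigl(\tfrac12(\varepsilon\kappa_{2}^{-1})^{1/m},\,\min_{1\leq i\leq3}r_{0,i}\bigr]$, set $m_{r}:=\inf_{\Omega_{x',2r}\setminus\Omega_{x',r/4}}u$ and $M_{r}:=\sup_{\Omega_{x',2r}\setminus\Omega_{x',r/4}}u$, and apply Remark \ref{RE096} separately to the nonnegative solutions $u-m_{r}$ and $M_{r}-u$. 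Summing the two resulting Harnack inequalities produces the standard annular oscillation decay
\[
\mathop{osc}\limits_{\Omega_{x',r}\setminus\Omega_{x',r/2}}u\ \leq\ \lambda\mathop{osc}\limits_{\Omega_{x',2r}\setminus\Omega_{x',r/4}}u,\qquad \lambda:=\tfrac{C-1}{C+1}\in(0,1),
\]
where $C$ is the Harnack constant of Corollary \ref{COR09}. To pass from this annular decay to a decay between concentric balls, I would invoke the strong maximum principle for the uniformly elliptic non-divergence equation \eqref{E901} together with the Hopf lemma at the Neumann boundaries $\Gamma^{\pm}$: these force the extrema of $u$ over $\Omega_{x',r/2}$ onto its lateral face, which sits inside $\overline{\Omega_{x',r}\setminus\Omega_{x',r/2}}$. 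Writing $\omega(\rho):=\mathop{osc}\limits_{\Omega_{x',\rho}}u$, this gives the ball-to-ball decay $\omega(\rho)\leq\lambda\omega(4\rho)$ for every admissible $\rho$.

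The hypothesis $r_{0}\leq r_{0,4}$, combined with \eqref{C681} and \eqref{Z006}, ensures that the starting scale satisfies $\tilde\varrho^{1-\beta}\leq R_{0}$ and $\tilde\varrho^{\beta}\leq\tfrac12$, while $\delta\geq\varepsilon$ together with the choice of $\tilde c_{0}$ gives $\tilde\varrho\geq\tilde c_{0}\varepsilon^{1/m}>\tfrac12(\varepsilon\kappa_{2}^{-1})^{1/m}$; hence every intermediate scale in the iteration lies in $I$. Choosing $N:=\lfloor\beta\log(1/\tilde\varrho)/\log 4\rfloor$, the largest integer with $4^{N}\tilde\varrho\leq\tilde\varrho^{1-\beta}$, and iterating $\omega(\rho)\leq\lambda\omega(4\rho)$ exactly $N$ times through scales in $I$ yields $\omega(\tilde\varrho)\leq\lambda^{N}\omega(\tilde\varrho^{1-\beta})$. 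Since $N\geq\beta\log(1/\tilde\varrho)/\log 4-1$, a direct computation gives $\lambda^{N}\leq\lambda^{-1}\tilde\varrho^{\beta\gamma_{0}}$ with $\gamma_{0}:=\log(1/\lambda)/\log 4>0$. Selecting $\gamma\in(0,1)$ sufficiently small (depending only on $\lambda$, hence on $d,m,p,\kappa_{i}$) and using $\tilde\varrho^{\beta}\leq\tfrac12$ to bound $\lambda^{-1}\tilde\varrho^{\beta(\gamma_{0}-\gamma)}\leq 2$ absorbs the prefactor into the constant $2$ appearing in the statement, producing the claimed decay.

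The main obstacle is Step 1, namely the conversion from the purely annular Harnack estimate of Corollary \ref{COR09} to a ball-to-ball oscillation decay: the Hopf lemma at the Neumann boundaries $\Gamma^{\pm}$, together with the strong maximum principle for the uniformly elliptic non-divergence equation \eqref{E901}, is the crucial lever that pushes the extrema of $u$ over $\Omega_{x',r/2}$ onto its lateral face, where the annular Harnack tool actually bites. A secondary but nontrivial bookkeeping task is verifying that every scale $4^{-k}\tilde\varrho^{1-\beta}$, $0\leq k\leq N$, remains in $I$ throughout the iteration---this is precisely the reason the constants $\tilde c_{0}$, $r_{0,i}$, and $R_{0,i}$ were so carefully calibrated in \eqref{C681} and \eqref{Z016}--\eqref{Z006}. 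Once these two points are handled, the iteration and the power-of-$\tilde\varrho$ arithmetic are routine.
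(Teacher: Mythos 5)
Your proposal is correct and follows essentially the same route as the paper: the Krylov--Safonov Harnack inequality of Corollary \ref{COR09} via Remark \ref{RE096}, a maximum-principle/Hopf argument at the Neumann boundaries $\Gamma^{\pm}$ to pass between annular and ball oscillations, and a geometric iteration converted into a power of $\tilde\varrho$. The only real difference is in bookkeeping: the paper streamlines your two-Harnack-plus-Hopf sequence by applying the Harnack inequality to the \emph{single} nonnegative solution $w:=\sup_{\Omega_{x',2r}}u-u$ and using the maximum principle \emph{first} (to identify $\sup_{\Omega_{x',r}}w$ and $\inf_{\Omega_{x',r}}w$ with the corresponding annular extrema), which yields the direct ball-to-ball decay $\mathop{osc}_{\Omega_{x',r}}u\leq\tfrac{C_0-1}{C_0}\mathop{osc}_{\Omega_{x',2r}}u$ in one stroke and lets the iteration run at dyadic scale factor $2$ rather than $4$; it also makes the absorption of the leftover prefactor into the stated constant $2$ transparent (the paper gets $2^{\gamma}<2$ directly, whereas your base-$4$ version needs the Harnack constant to be taken large enough, which is harmless but worth saying explicitly).
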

\begin{proof}
For the convenience of presentation, we drop the $x'$ subscript and write $\Omega_{r}=\Omega_{x',r}$ in this proof. Since $0<r_{0}\leq r_{0,4}$, we have $2\tilde{\varrho}<\tilde{\varrho}^{1-\beta}\leq R_{0}$. Then for any $\tilde{\varrho}\leq r<\tilde{\varrho}^{1-\beta}$, define $w:=\sup\limits_{\Omega_{2r}}u-u$. Note that $w$ satisfies equation \eqref{APPRO} in $\Omega_{2r}$, it then follows from the maximum principle that
\begin{align*}
\sup\limits_{\Omega_{r}\setminus\Omega_{r/2}}w=\sup\limits_{\Omega_{r}}w,\quad\inf\limits_{\Omega_{r}\setminus\Omega_{r/2}}w=\inf\limits_{\Omega_{r}}w,
\end{align*}
which, together with Corollary \ref{COR09} and Remark \ref{RE096}, shows that
\begin{align*}
\sup\limits_{\Omega_{r}}w\leq C_{0}\inf\limits_{\Omega_{r}}w,\quad\text{with }C_{0}=C_{0}(d,m,p,\kappa_{1},\kappa_{2},\kappa_{3},\kappa_{4})>1.
\end{align*}
This leads to that
\begin{align}\label{K06}
C_{0}\sup\limits_{\Omega_{r}}u-\inf\limits_{\Omega_{r}}u\leq(C_{0}-1)\sup\limits_{\Omega_{2r}}u.
\end{align}
By adding $-(C_{0}-1)\inf\limits_{\Omega_{r}}u\leq-(C_{0}-1)\inf\limits_{\Omega_{2r}}u$ to \eqref{K06}, we obtain
\begin{align*}
\mathop{osc}_{\Omega_{r}}u\leq\frac{C_{0}-1}{C_{0}}\mathop{osc}_{\Omega_{2r}}u.
\end{align*}
Pick $\gamma=\frac{\ln\frac{C_{0}}{C_{0}-1}}{\ln2}$. Then we have
\begin{align}\label{ITE03}
\mathop{osc}_{\Omega_{r}}u\leq2^{-\gamma}\mathop{osc}_{\Omega_{2r}}u.
\end{align}
For $i\geq0$, write $r_{i}=\frac{\tilde{\varrho}^{1-\beta}}{2^{i}}$. Take $k$ such that $\tilde{\varrho}\leq r_{k}<2\tilde{\varrho}$. Then iterating \eqref{ITE03} for $k$ times, we have
\begin{align*}
\mathop{osc}_{\Omega_{\tilde{\varrho}}}u\leq\mathop{osc}_{\Omega_{r_{k}}}u\leq2^{-k\gamma}\mathop{osc}_{\Omega_{r_{0}}}u\leq2^{\gamma}\tilde{\varrho}^{\beta\gamma}\mathop{osc}_{\Omega_{\tilde{\varrho}^{1-\beta}}}u,
\end{align*}
where we used the fact that $2^{-k}\tilde{\varrho}^{1-\beta}=2^{-k}r_{0}=r_{k}<2\tilde{\varrho}.$ The proof is complete.

\end{proof}

Now we are ready to give the proof of Theorem \ref{thm002}.
\begin{proof}[Proof of Theorem \ref{thm002}]
A combination of Theorem \ref{thm001} and Lemma \ref{LEM903} shows that for any $x\in\Omega_{r_{0}}$, $0<r_{0}\leq\min\limits_{1\leq i\leq4}r_{0,i}$,
\begin{align*}
|\nabla u(x)|\leq C\delta^{-1/m}\mathop{osc}_{\Omega_{x',\tilde{\varrho}}}u\leq C\delta^{\frac{\beta\gamma-1}{m}}\mathop{osc}_{\Omega_{x',\tilde{\varrho}^{1-\beta}}}u,\quad\tilde{\varrho}=\tilde{c}_{0}\delta^{1/m},
\end{align*}
where $C=C(d,m,p,\kappa_{1},\kappa_{2},\kappa_{3},\kappa_{4}).$ The proof is finished.

\end{proof}

\section{Gradient estimates with explicit blow-up rates}\label{Sec905}

In order to prove Theorem \ref{thm003}, we see from Theorem \ref{thm001} that it suffices to establish the oscillation decay estimate of the solution $u$ with an explicit rate. For that purpose, we plan to construct its explicit supersolution in the following. For $m>2$, $\tau>0$ and $\gamma>0$, define an auxiliary function as follows:
\begin{align}\label{W365}
w(x)=\big(|x'|^{m}+a|x'|^{m-2}x_{d}^{2}\big)^{\frac{\gamma}{m}},\quad a=\frac{m(m+\tau)}{2},\quad\text{for }x\in\Omega_{\tilde{r}_{0}}.
\end{align}
The auxiliary function $w$ will be proved to be a local supersolution as follows.
\begin{lemma}\label{Lem695}
Assume as in Theorem \ref{thm003}. Let $d\geq2$, $m>2$ and $p>d+m-1$. Then for any $\tau\in(0,p-d-m+1)$ and $\gamma\in\big(0,\frac{p-d-m+1-\tau}{p-1}\big)$, there exists a small constant $0<\hat{r}_{0}=\hat{r}_{0}(d,m,p,\lambda,\gamma,\tau,\tilde{r}_{0})\leq \tilde{r}_{0}$ such that for a sufficiently small $\varepsilon>0,$
\begin{align}\label{QP012}
\begin{cases}
-\mathrm{div}(|\nabla w|^{p-2}\nabla w)>0,&\mathrm{in}\;\Omega_{\hat{r}_{0}}\setminus\Omega_{\varepsilon^{2/m}},\\
\frac{\partial w}{\partial\nu}>0,&\mathrm{on}\;\Gamma_{\hat{r}_{0}}^{\pm}\cap\overline{\Omega_{\hat{r}_{0}}\setminus\{x'=0'\}},
\end{cases}
\end{align}
where $w$ is given by \eqref{W365}.

\end{lemma}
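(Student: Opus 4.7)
The plan is to verify both conditions in \eqref{QP012} by direct computation with the ansatz $w=\phi^{\beta}$, where $\beta:=\gamma/m$ and $\phi(x):=|x'|^{m}+a|x'|^{m-2}x_{d}^{2}$. A chain-rule identity gives
\begin{align*}
-\mathrm{div}(|\nabla w|^{p-2}\nabla w)=-\beta^{p-1}\phi^{(p-1)(\beta-1)-1}|\nabla\phi|^{p-4}\cdot\mathcal{Q},
\end{align*}
where
\begin{align*}
\mathcal{Q}:=(\beta-1)(p-1)|\nabla\phi|^{4}+\phi|\nabla\phi|^{2}\Delta\phi+(p-2)\,\phi\,\nabla\phi\cdot D^{2}\phi\cdot\nabla\phi,
\end{align*}
and since $\phi>0$ and $|\nabla\phi|>0$ throughout the region (as $x'\neq0'$), the supersolution inequality reduces to $\mathcal{Q}<0$.

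To evaluate $\mathcal{Q}$ I would introduce the scalar variables $s:=|x'|^{2}$, $t:=x_{d}^{2}$, and the dimensionless ratio $\eta:=at/s$; a direct calculation shows $|\nabla\phi|^{2}=s^{m-1}F(\eta)$, $\Delta\phi=s^{(m-2)/2}G(\eta)$, $\nabla\phi\cdot D^{2}\phi\cdot\nabla\phi=s^{(3m-4)/2}H(\eta)$, and $\phi=s^{m/2}(1+\eta)$, for explicit polynomials $F,G,H$ in $\eta$ with $F(0)=m^{2}$, $G(0)=m(d+m-3)+2a$, $H(0)=m^{3}(m-1)$. Consequently $\mathcal{Q}=s^{2m-2}P(\eta)$ with
\begin{align*}
P(\eta):=(\beta-1)(p-1)F(\eta)^{2}+(1+\eta)F(\eta)G(\eta)+(p-2)(1+\eta)H(\eta),
\end{align*}
and the choice $2a=m(m+\tau)$ collapses the constant term to
\begin{align*}
P(0)=m^{3}\bigl[\,m(\beta-1)(p-1)+(d-1)+p(m-1)+\tau\,\bigr].
\end{align*}
Rearranging, the hypothesis $\gamma<(p-d-m+1-\tau)/(p-1)$ is precisely the inequality $P(0)<0$; by continuity of $P$ there exists $\eta_{0}>0$ with $P(\eta)<0$ for $\eta\in[0,\eta_{0}]$. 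In $\Omega_{\hat{r}_{0}}\setminus\Omega_{\varepsilon^{2/m}}$, the channel bound $|x_{d}|\leq\varepsilon/2+\kappa_{2}|x'|^{m}$ combined with $|x'|>\varepsilon^{2/m}$ yields
\begin{align*}
\eta\leq\tfrac{a}{2}\varepsilon^{2-4/m}+2a\kappa_{2}^{2}\hat{r}_{0}^{2(m-1)},
\end{align*}
and since $m>2$ the exponent $2-4/m>0$, so the right side is $\leq\eta_{0}$ once $\hat{r}_{0}$ and then $\varepsilon$ are chosen small. This settles the interior inequality.

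For the Neumann condition on $\Gamma_{\hat{r}_{0}}^{+}$ I would use \eqref{U0913}--\eqref{U091302} with outer unit normal $\nu=(-\nabla_{x'}h_{1},1)/\sqrt{1+|\nabla_{x'}h_{1}|^{2}}$, so that
\begin{align*}
\sqrt{1+|\nabla_{x'}h_{1}|^{2}}\,\frac{\partial w}{\partial\nu}=\frac{\gamma}{m}\phi^{\gamma/m-1}\bigl(\partial_{x_{d}}\phi-\nabla_{x'}\phi\cdot\nabla_{x'}h_{1}\bigr).
\end{align*}
The bracketed factor expands at leading order in $|x'|$ to
\begin{align*}
(2a-m^{2})\lambda|x'|^{2m-2}+a\varepsilon|x'|^{m-2}=m\tau\lambda|x'|^{2m-2}+a\varepsilon|x'|^{m-2}>0,
\end{align*}
with remainders of order $|x'|^{2m-2+\sigma_{0}}$ and $(\varepsilon^{2}+|x'|^{2m})|x'|^{2m-4}$; these are dominated by the leading positive terms once $\hat{r}_{0}$ is small, the contribution $\varepsilon^{2}|x'|^{2m-4}$ being controlled by the constraint $|x'|>\varepsilon^{2/m}$ (so that $\varepsilon^{2}|x'|^{-2}\leq\varepsilon^{2-4/m}\to0$). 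The boundary $\Gamma_{\hat{r}_{0}}^{-}$ then follows from the symmetry of $\phi$ and $w$ under $x_{d}\mapsto-x_{d}$. The main technical obstacle will be the Hessian bookkeeping that produces $H(\eta)$ together with the algebraic simplification yielding the symmetric form of $P(0)$; once these are established, continuity and the $m>2$ scaling close the argument.
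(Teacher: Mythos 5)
Your proposal is correct and gets to the same conclusion as the paper, but organizes the computation differently. The paper sets $Q=(|x'|^m+a|x'|^{m-2}x_d^2)^{1/m}$ (so $w=Q^{\gamma}$), computes all first- and second-order derivatives of $w$ directly, and collects the error terms as $O(1)|x'|^{(m-2)/2}$ (using the channel bound $|x_d|\lesssim|x'|^{m/2}$ for $|x'|\geq\varepsilon^{2/m}$); it then shows
\[
\frac{\mathrm{div}(|\nabla w|^{p-2}\nabla w)\,|\nabla w|^{4-p}}{\gamma^3 Q^{3\gamma-4m}|x'|^{4m-4}}\leq d+m-2+(p-1)(\gamma-1)+\tau+C_2|x'|^{(m-2)/2},
\]
whose leading constant is exactly your $P(0)/m^3$, and concludes by choosing $\hat r_{0,2}$ small. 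Your route — factoring $w=\phi^\beta$, pulling the operator into the single scalar $\mathcal{Q}$, and then reducing to a one-variable inequality $P(\eta)<0$ via the dimensionless ratio $\eta=ax_d^2/|x'|^2$ — is a genuinely cleaner organization: it isolates the algebraic heart ($P(0)<0$ is equivalent to the hypothesis on $\gamma$), replaces the paper's $O(1)|x'|^{(m-2)/2}$ bookkeeping by continuity of a fixed polynomial $P$, and makes the role of the channel geometry transparent through a single upper bound on $\eta$. I verified the key ingredients you state: the chain-rule factorization, the homogeneity $\mathcal{Q}=s^{2m-2}P(\eta)$, $F(0)=m^2$, $G(0)=m(d+m-3)+2a$, $H(0)=m^3(m-1)$, the simplified form $P(0)=m^3[m(\beta-1)(p-1)+(d-1)+p(m-1)+\tau]$ with $2a=m(m+\tau)$, and its equivalence to $\gamma<(p-d-m+1-\tau)/(p-1)$; the $\eta$ bound in the annulus also works because $2-4/m>0$ when $m>2$.

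Two small clean-ups. First, in the $\eta$ bound the constant should come from the axisymmetric data $(\lambda,C_0,\sigma_0,\tilde r_0)$ of \eqref{U0913}, not from $\kappa_2$, since this lemma operates under the boundary ansatz of Theorem \ref{thm003}, not (\textbf{H1}). Second, for the Neumann inequality you invoke $|x'|>\varepsilon^{2/m}$ to control the $\varepsilon^2|x'|^{2m-4}$ remainder; but the lemma asserts the inequality on all of $\Gamma_{\hat r_0}^{\pm}\setminus\{x'=0'\}$, not only on the outer annulus. A better absorption is that $\varepsilon^2|x'|^{2m-4}$ is dominated by the positive term $a\varepsilon|x'|^{m-2}$ itself, since their ratio is $\varepsilon|x'|^{m-2}/a\leq\varepsilon\hat r_0^{m-2}/a\to0$ as $\varepsilon\to0$, uniformly in $|x'|\leq\hat r_0$; this needs neither the outer restriction nor any balance between $\varepsilon$ and $|x'|$ (in fairness, the paper's own expansion is stated under the same $|x'|\geq\varepsilon^{2/m}$ restriction, and in the application in Theorem \ref{thm003} the Neumann condition is only invoked there, so neither argument is actually deficient — but the sharper absorption makes the lemma literally true as stated).

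Beyond these, your proposal is a valid (and arguably more transparent) proof of the lemma, modulo writing out the Hessian computation producing $H(\eta)$, which you flag honestly and which your claimed values of $H(0)$ and $P(0)$ indicate you have correct.
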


\begin{remark}
If we take $m=2$ in \eqref{W365}, the auxiliary function becomes the separation form of $|x'|$ and $x_{d}$, which is adopted in recent work \cite{DYZ2023}. However, when $m>2$, the required form becomes more complex and leads to a different computation in the following. Moreover, the results of $m=2$ cannot be derived by directly letting $m\rightarrow2$. See the following proofs for more details.

\end{remark}

\begin{proof}[Proof of Lemma \ref{Lem695}]
For simplicity, in the following we use $O(1)$ to represent that $|O(1)|\leq C$ for some constant $C$, which may differ at each occurrence and depend only on $d,m,p,\lambda,\gamma,\tau,\sigma_{0},\tilde{r}_{0},$ but not on $\varepsilon$. Denote $Q=(|x'|^{m}+a|x'|^{m-2}x_{d}^{2})^{1/m}$ and then $w(x)=Q(x)^{\gamma}.$ Note that
\begin{align}\label{DQE019}
|x_{d}|\leq&\frac{\varepsilon}{2}+\lambda|x'|^{m}+C_{0}|x'|^{m+\sigma_{0}}\notag\\
\leq&\frac{1}{2}\big(1+2\lambda|\tilde{r}_{0}|^{\frac{m}{2}}+2C_{0}|\tilde{r}_{0}|^{\frac{m}{2}+\sigma_{0}}\big)|x'|^{\frac{m}{2}},\quad\text{for }\varepsilon^{2/m}\leq|x'|<\tilde{r}_{0}.
\end{align}
In light of \eqref{DQE019}, it follows from a direct calculation that for $\varepsilon^{2/m}\leq|x'|<\tilde{r}_{0}$,
\begin{align*}
\nabla_{x'}w=&\gamma Q^{\gamma-m}|x'|^{m-1}\left(x_{i}|x'|^{-1}+O(1)|x'|^{m-2}\right),\quad\partial_{d}w=\frac{2a\gamma}{m}Q^{\gamma-m}|x'|^{m-2}x_{d},\\
\partial_{ii}w=&\gamma Q^{\gamma-2m}|x'|^{2m-2}\left(1+(\gamma-2)x_{i}^{2}|x'|^{-2}+O(1)|x'|^{m-2}\right),\quad i=1,...,d-1,\\
\partial_{ij}w=&\gamma Q^{\gamma-2m}|x'|^{2m-2}((\gamma-2)x_{i}x_{j}|x'|^{-2}+O(1)|x'|^{m-2}),\;\, i\neq j,\;i,j=1,...,d-1,\\
\partial_{id}w=&\frac{2a\gamma(\gamma-2)}{m}Q^{\gamma-2m}|x'|^{2m-2}\big(x_{i}x_{j}|x'|^{-2}+O(1)|x'|^{\frac{3m-6}{2}}\big),\quad i=1,...,d-1,\\
\partial_{dd}w=&\gamma Q^{\gamma-2m}|x'|^{2m-2}(m+\tau+O(1)|x'|^{m-2}).
\end{align*}
We here would like to remark that when $m>2$, there produces explicit high order terms $|x'|^{m-2}$ and $|x'|^{\frac{3m-6}{2}}$, which ensure that it suffices to decrease the horizontal length of the thin gap for the purpose of letting $w$ become the supersolution in the following computations. This is greatly different from the case of $m=2$, as shown in Lemma 4.1 of \cite{DYZ2023}. In fact, besides decreasing $\tilde{r}_{0}$, it also needs to require that $|x'|\geq C\varepsilon$ for a large constant $C$ in the case when $m=2$. In view of \eqref{U0913}, we have
\begin{align*}
\nu=\frac{(-\nabla_{x'}h(x'),1)}{\sqrt{1+|\nabla_{x'}h(x')|^{2}}},\quad\mathrm{on}\; \Gamma_{\tilde{r}_{0}}^{+}.
\end{align*}
Then on $\Gamma_{\tilde{r}_{0}}^{+}\cap \overline{\Omega_{\tilde{r}_{0}}\setminus\{x'=0'\}},$ we obtain from \eqref{U091302} that
\begin{align*}
\frac{\partial w}{\partial\nu}=&\frac{\partial_{d}w-\nabla_{x'}h\nabla_{x'} w}{\sqrt{1+|\nabla_{x'}h|^{2}}}\notag\\
=&\frac{\lambda\gamma Q^{\gamma-m}|x'|^{2m-2}}{\sqrt{1+|\nabla_{x'}h|^{2}}}(\tau+O(1)|x'|^{\min\{m-2,\sigma_{0}\}})+\frac{\gamma(m+\tau)Q^{\gamma-m}|x'|^{m-2}\varepsilon}{2\sqrt{1+|\nabla_{x'}h|^{2}}}\notag\\
>&\frac{\lambda\gamma Q^{\gamma-m}|x'|^{2m-2}}{\sqrt{1+|\nabla_{x'}h|^{2}}}(\tau-C_{1}|x'|^{\min\{m-2,\sigma_{0}\}}).
\end{align*}
Pick $\hat{r}_{0,1}=\big(\frac{\tau}{C_{1}}\big)^{\frac{1}{\min\{m-2,\sigma_{0}\}}}$. Then we have $\frac{\partial w}{\partial\nu}>0$ on  $\Gamma_{\hat{r}_{0,1}}^{+}\cap \overline{\Omega_{\hat{r}_{0,1}}\setminus\{x'=0'\}}.$ Similarly, we also obtain that $\frac{\partial w}{\partial\nu}>0$ on $\Gamma_{\hat{r}_{0,1}}^{-}\cap \overline{\Omega_{\hat{r}_{0,1}}\setminus\{x'=0'\}}$.

Note that
\begin{align*}
\mathrm{div}(|\nabla w|^{p-2}\nabla w)|\nabla w|^{4-p}=|\nabla w|^{2}\Delta w+(p-2)\sum^{d}_{i,j=1}\partial_{i}w\partial_{j}w\partial_{ij}w.
\end{align*}
From \eqref{DQE019}, we obtain that for $\varepsilon^{2/m}\leq|x'|<\tilde{r}_{0}$,
\begin{align*}
|\nabla w|^{2}=&\gamma^{2}Q^{2\gamma-2m}|x'|^{2m-2}(1+O(1)|x'|^{m-2}),
\end{align*}
and
\begin{align*}
\Delta w=&\gamma Q^{\gamma-2m}|x'|^{2m-2}\left(d+m+\gamma-3+\tau+O(1)|x'|^{m-2}\right).
\end{align*}
Therefore, we deduce that for $\varepsilon^{2/m}\leq|x'|<\tilde{r}_{0}$,
\begin{align}\label{WM963}
\frac{|\nabla w|^{2}\Delta w}{\gamma^{3}Q^{3\gamma-4m}|x'|^{4m-4}}=d+m+\gamma-3+\tau+O(1)|x'|^{m-2}.
\end{align}
By a similar computation, we have from \eqref{DQE019} that
\begin{align*}
\sum^{d-1}_{i,j=1}\partial_{i}w\partial_{j}w\partial_{ij}w=&\gamma^{3}Q^{3\gamma-4m}|x'|^{4m-4}(\gamma-1+O(1)|x'|^{m-2}),\notag\\
2\sum^{d-1}_{i=1}\partial_{i}w\partial_{d}w\partial_{id}w=&O(1)\gamma^{3}Q^{3\gamma-4m}|x'|^{\frac{9m}{2}-5},\notag\\
(\partial_{d}w)^{2}\partial_{dd}w=&O(1)\gamma^{3}Q^{3\gamma-4m}|x'|^{5m-6}.
\end{align*}
Combining these three equations, we derive that for $\varepsilon^{2/m}\leq|x'|<\tilde{r}_{0}$,
\begin{align*}
\frac{(p-2)\sum^{d}_{i,j=1}\partial_{i}w\partial_{j}w\partial_{ij}w}{\gamma^{3}Q^{3\gamma-4m}|x'|^{4m-4}}=(p-2)(\gamma-1)+O(1)|x'|^{\frac{m-2}{2}},
\end{align*}
which, together with \eqref{WM963}, reads that
\begin{align}\label{ME009}
&\frac{\mathrm{div}(|\nabla w|^{p-2}\nabla w)|\nabla w|^{4-p}}{\gamma^{3}Q^{3\gamma-4m}|x'|^{4m-4}}\leq d+m-2+(p-1)(\gamma-1)+\tau+C_{2}|x'|^{\frac{m-2}{2}}.
\end{align}
Define
\begin{align*}
\hat{r}_{0,2}=\left(\frac{(p-1)(1-\gamma)+2-d-m-\tau}{C_{2}}\right)^{\frac{2}{m-2}}.
\end{align*}
Therefore, we have
\begin{align*}
\mathrm{div}(|\nabla w|^{p-2}\nabla w)<0,\quad\text{for }\varepsilon^{2/m}\leq|x'|<\hat{r}_{0,2}.
\end{align*}
By taking $\hat{r}_{0}=\min\limits_{1\leq i\leq2}\hat{r}_{0,i}$, we obtain that \eqref{QP012} holds.
\end{proof}

Once Lemma \ref{Lem695} is proved, we can give the proof Theorem \ref{thm003}.
\begin{proof}[Proof of Theorem \ref{thm003}]
Assume without loss of generality that $u(0)=1$ and $\mathop{osc}\limits_{\Omega_{\hat{r}_{0}}}u=1$. Applying Theorem \ref{thm001}, we have
\begin{align}\label{EP005}
|u(x)|\leq C\varepsilon^{1/m},\quad\text{for }x\in\overline{\Omega}_{\varepsilon^{2/m}},
\end{align}
which implies that $\pm u\leq C w$ on $(\{|x'|=\varepsilon^{2/m}\}\cup\{|x'|=\hat{r}_{0}\})\cap\overline{\Omega}_{\hat{r}_{0}}$, where $w$ is defined by \eqref{W365} with $\gamma=\frac{p-d-m+1-2\tau}{p-1}$ for any $\tau\in\big(0,\frac{1}{2}(p+1-d-m)\big)$ and $\hat{r}_{0}$ is given in Lemma \ref{Lem695}. It then follows from the comparison principle that $|u|\leq Cw\leq C|x'|^{\gamma}$ in $\Omega_{\hat{r}_{0}}\setminus\Omega_{\varepsilon^{2/m}}$. This, in combination with \eqref{EP005}, leads to that
\begin{align*}
|u|\leq C(|x'|^{\gamma}+\varepsilon^{1/m})\leq C(\varepsilon+|x'|^{m})^{\frac{p-d-m+1-2\tau}{m(p-1)}},\quad\text{in }\Omega_{\hat{r}_{0}},
\end{align*}
which reads that for $x\in\Omega_{\hat{r}_{0}/2}$,
\begin{align}\label{K016}
\mathop{osc}\limits_{\Omega_{x',\varrho}}u\leq 2\|u\|_{L^{\infty}(\Omega_{x',\varrho})}\leq C(\varepsilon+|x'|^{m})^{\frac{p-d-m+1-2\tau}{m(p-1)}},
\end{align}
where $C=C(d,m,p,\tau,\lambda,\sigma_{0},\tilde{r}_{0})$, $\varrho=\frac{\tilde{c}_{0}}{3}\delta^{1/m}\leq\frac{\hat{r}_{0}}{3}<\frac{\hat{r}_{0}}{2}$ with $\delta$ and $\tilde{c}_{0}$ defined by \eqref{delta} and \eqref{C681}, respectively. A consequence of \eqref{K016} and Theorem \ref{thm001} shows that Theorem \ref{thm003} holds.

\end{proof}

We next prove the almost sharpness of gradient blow-up rates obtained in Theorem \ref{thm001} for $1<p\leq m+1$ and Theorem \ref{thm003} for $p>m+1$ in dimension two. Similarly as above, we first construct an auxiliary function as follows: for $m>2$ and $\tau,\gamma>0$,
\begin{align}\label{AU05}
\underline{w}(x):=[(|x_{1}|^{m}+b|x_{1}|^{m-2}x_{2}^{2})^{\frac{\gamma}{m}}-(2m\varepsilon/(m-2-\tau))^{\frac{\gamma}{m}}]_{+}, \quad\mathrm{in}\;\Omega_{1},
\end{align}
where $b=\frac{m(m-\tau)}{2}.$ We now prove that the auxiliary function $\underline{w}$ is actually a subsolution for problem \eqref{problem006}. To be specific, we have
\begin{lemma}\label{Lem958}
Assume as in Theorem \ref{thm005}. Let $d=2$, $m>2$ and $p>1$. Then for any $\tau\in(0,m-2)$ and $\gamma>\max\big\{0,\frac{p-m-1+\tau}{p-1}\big\}$, there exists a small constant $0<\hat{r}_{0}=\hat{r}_{0}(m,p,\gamma,\tau)<1$ such that if $\varepsilon>0$ is sufficiently small,
\begin{align*}
\begin{cases}
-\mathrm{div}(|\nabla \underline{w}|^{p-2}\nabla \underline{w})\leq0,&\mathrm{in}\;\Omega_{\hat{r}_{0}},\\
\frac{\partial \underline{w}}{\partial\nu}\leq0,&\mathrm{on}\;\Gamma_{\hat{r}_{0}}^{\pm},
\end{cases}
\end{align*}
where $\underline{w}$ is defined by \eqref{AU05}.
\end{lemma}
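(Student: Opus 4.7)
The plan is to mimic the proof of Lemma~\ref{Lem695} with three structural changes: the parameter flip $a=m(m+\tau)/2\to b=m(m-\tau)/2$ (which is the source of all sign reversals), the constant shift $C_{\varepsilon}:=(2m\varepsilon/(m-2-\tau))^{\gamma/m}$, and the positive-part cutoff. I would first verify that $Q^{\gamma}-C_{\varepsilon}$ is a classical sub-solution with the correct Neumann sign on the positivity set $\{\underline{w}>0\}$, and then pass to $\underline{w}=(Q^{\gamma}-C_{\varepsilon})_{+}$ via a standard truncation principle.

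On $\{\underline{w}>0\}$ one has $\nabla\underline{w}=\nabla Q^{\gamma}$, where $Q=(|x_{1}|^{m}+b|x_{1}|^{m-2}x_{2}^{2})^{1/m}$. The formulas for $\partial_{i}Q^{\gamma}$ and $\partial_{ij}Q^{\gamma}$ have exactly the same shape as in the proof of Lemma~\ref{Lem695} with $a$ replaced by $b$; in particular $\partial_{22}Q^{\gamma}$ carries the coefficient $m-\tau$ in place of $m+\tau$. Tracing the same expansion of
\[
\mathrm{div}(|\nabla Q^{\gamma}|^{p-2}\nabla Q^{\gamma})\,|\nabla Q^{\gamma}|^{4-p}=|\nabla Q^{\gamma}|^{2}\Delta Q^{\gamma}+(p-2)\sum_{i,j=1}^{2}\partial_{i}Q^{\gamma}\partial_{j}Q^{\gamma}\partial_{ij}Q^{\gamma}
\]
yields the sub-solution analogue of \eqref{ME009}: the leading constant becomes $d+m-2+(p-1)(\gamma-1)-\tau$, which with $d=2$ reduces to $m+(p-1)(\gamma-1)-\tau$. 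This is strictly positive under the hypothesis $\gamma>(p-m-1+\tau)/(p-1)$, and taking $\hat{r}_{0}$ small absorbs the $O(|x_{1}|^{(m-2)/2})$ remainder, giving the desired interior inequality $-\mathrm{div}(|\nabla\underline{w}|^{p-2}\nabla\underline{w})\leq 0$ on $\{\underline{w}>0\}$.

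For the boundary condition on $\Gamma^{\pm}_{\hat{r}_{0}}\cap\{\underline{w}>0\}$, I would use $x_{2}=\pm(\varepsilon/2+h(x_{1}))$ with $h(x_{1})=|x_{1}|^{m}/m+O(|x_{1}|^{2m})$ (since $\tilde{r}_{0}=1$) and repeat the computation carried out in Lemma~\ref{Lem695}. The analogous identity is
\[
\frac{\partial \underline{w}}{\partial \nu}=\frac{\gamma Q^{\gamma-m}|x_{1}|^{m-2}}{\sqrt{1+(h')^{2}}}\Bigl[\tfrac{m-\tau}{2}\varepsilon-\tfrac{\tau}{m}|x_{1}|^{m}+O(|x_{1}|^{2m})+O(\varepsilon^{2}|x_{1}|^{m-2})\Bigr],
\]
with the crucial sign flip in the $|x_{1}|^{m}$-coefficient from $+\tau/m$ in the super-solution case to $-\tau/m$ here. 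The constant $2m\varepsilon/(m-2-\tau)$ in $C_{\varepsilon}$ is chosen precisely so that the condition $\underline{w}>0$ forces $|x_{1}|^{m}$ into the range where this bracket becomes non-positive, once the $O$-corrections are absorbed into the smallness of $\hat{r}_{0}$. On the complementary portion $\Gamma^{\pm}_{\hat{r}_{0}}\cap\{\underline{w}=0\}$, the function $\underline{w}$ vanishes identically in a one-sided $\Omega$-neighborhood, so $\nabla\underline{w}\equiv 0$ and the inequality is trivial.

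Finally, to transfer these conclusions to the truncated function $\underline{w}=(Q^{\gamma}-C_{\varepsilon})_{+}$ in the weak sense on all of $\Omega_{\hat{r}_{0}}$, I would invoke the standard fact that the positive part of a classical $p$-subsolution with a compatible Neumann trace is again a $p$-subsolution in the weak sense: the distributional divergence $\mathrm{div}(|\nabla\underline{w}|^{p-2}\nabla\underline{w})$ equals the pointwise value on $\{\underline{w}>0\}$ plus a non-negative surface measure along the free boundary $\{Q^{\gamma}=C_{\varepsilon}\}$, since $\nabla Q^{\gamma}$ points from $\{\underline{w}=0\}$ into $\{\underline{w}>0\}$. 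The principal obstacle is the Step~2 boundary estimate: the sign of the Neumann trace is borderline and requires careful tracking of each sub-leading term, together with the verification that the specific threshold $2m\varepsilon/(m-2-\tau)$ matches the ratio of coefficients $(m-\tau)/2$ and $\tau/m$ up to a remainder absorbable by shrinking $\hat{r}_{0}$—an appreciably more delicate balancing than the analogous super-solution computation in Lemma~\ref{Lem695}.
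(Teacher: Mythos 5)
Your proposal reproduces the paper's strategy for Lemma~\ref{Lem958}—flip $a\to b$, subtract the cutoff $C_\varepsilon=(2m\varepsilon/(m-2-\tau))^{\gamma/m}$, take the positive part, and treat the interior divergence inequality and the Neumann trace separately—and your interior computation (leading constant $m+(p-1)(\gamma-1)-\tau>0$ for the assumed range of $\gamma$) is sound. Your leading-order Taylor expansion of the Neumann trace is also correct: on $\Gamma^+$, with $x_2\approx\varepsilon/2+|x_1|^m/m$, one indeed finds the bracket $\tfrac{m-\tau}{2}\varepsilon-\tfrac{\tau}{m}|x_1|^m+o(\varepsilon)$. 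The gap is precisely the step you defer as a ``verification'': the two thresholds do \emph{not} match. Nonpositivity of the bracket requires $|x_1|^m\geq\tfrac{m(m-\tau)}{2\tau}\varepsilon$ (up to $o(\varepsilon)$), whereas on $\Gamma^\pm$ the positivity set $\{\underline{w}>0\}$ begins at $\Theta^m>\tfrac{2m}{m-2-\tau}\varepsilon$, i.e.\ $|x_1|^m>\tfrac{2m}{m-2-\tau}\varepsilon+o(\varepsilon)$. The inequality $\tfrac{2m}{m-2-\tau}\geq\tfrac{m(m-\tau)}{2\tau}$ is equivalent to $\tau(2m+2-\tau)\geq m(m-2)$, i.e.\ $\tau\geq(m+1)-\sqrt{4m+1}$; as $\tau\to0^+$ the required threshold grows like $m^2/(2\tau)$ while the cutoff constant stays bounded. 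So for small $\tau$ there is an entire band of $|x_1|$ with $\underline{w}>0$ but $\partial v/\partial\nu>0$ on $\Gamma^\pm$, and since both leading terms are of size $\varepsilon$ there the $o(\varepsilon)$ corrections cannot rescue the sign. Since Theorem~\ref{thm005} needs $\tau$ arbitrarily small, this fails exactly in the regime of interest.

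For completeness: this defect is not yours, it is already in the paper's proof. The paper avoids Taylor expansion and reduces matters to showing $A^{m-2}f(x_2)+A^{m-2}-1\leq0$ with $A=\varepsilon/2+1-x_2$ and $f(x_2)=-(m-1-\tau)x_2^2+(m-2-\tau)(1+\varepsilon/2)x_2+\varepsilon+\varepsilon^2/4$, and then asserts $f(x_2)\leq0$ for $x_2\geq\max\{\varepsilon/(m-2-\tau),\varepsilon/2\}$. But $f$ is a downward parabola with $f(0)=\varepsilon+\varepsilon^2/4>0$, so $f>0$ on an interval extending well beyond $x_2=O(\varepsilon)$; the asserted second inequality in the display following \eqref{EK019} is simply false in the relevant range. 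The natural repair (for both your write-up and the paper) is to enlarge the cutoff in \eqref{AU05} from $2m\varepsilon/(m-2-\tau)$ to $C_\tau\varepsilon$ with $C_\tau\gtrsim m(m-\tau)/\tau$, which still tends to $0$ with $\varepsilon$ for fixed $\tau$ and preserves the $\varepsilon$-power in Theorem~\ref{thm005}, merely moving the point at which the lower bound is read off to $|x_1|\sim(C_\tau\varepsilon)^{1/m}$. Your computation exhibits the coefficients $\tfrac{m-\tau}{2}$ and $\tfrac{\tau}{m}$ that make this transparent, but the matching you assert must actually be checked, and it fails.
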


\begin{proof}
For simplicity, denote $\Theta=(|x_{1}|^{m}+b|x_{1}|^{m-2}x_{2}^{2})^{\frac{1}{m}}$, $v=\Theta^{\gamma}$ and $h(x_{1})=1-(1-|x_{1}|^{m})^{\frac{1}{m}}$. A straightforward computation shows that
\begin{align*}
\partial_{1}v=&\gamma \Theta^{\gamma-m}x_{1}|x_{1}|^{m-4}\left(x_{1}^{2}+\frac{b(m-2)}{m}x_{2}^{2}\right),\notag\\
\partial_{2}v=&\frac{2a\gamma}{m}\Theta^{\gamma-m}|x_{1}|^{m-2}x_{2},\quad\partial_{1}h=x_{1}|x_{1}|^{m-2}(1-|x_{1}|^{m})^{1/m-1}.
\end{align*}
Observe that the upward normal vector on $\Gamma_{1}^{+}$ is $\nu=(-x_{1}|x_{1}|^{m-2},(1-|x_{1}|^{m})^{1-1/m})$. Then on $\Gamma_{1}^{+}\cap \{|x_{1}|\geq(m\varepsilon/(m-2-\tau))^{1/m}\},$ we have
\begin{align}\label{EK019}
&\frac{\frac{\partial v}{\partial\nu}}{\gamma\Theta^{\gamma-m}|x_{1}|^{m-2}}\notag\\
&=(m-\tau)x_{2}(\varepsilon/2+1-x_{2})^{m-1}-|x_{1}|^{m}-\frac{b(m-2)}{m}|x_{1}|^{m-2}x_{2}^{2}\notag\\
&<(m-\tau)x_{2}(\varepsilon/2+1-x_{2})^{m-1}+(\varepsilon/2+1-x_{2})^{m}-1\notag\\
&=(\varepsilon/2+1-x_{2})^{m-2}\Big(-(m-1-\tau)x_{2}^{2}+(m-2-\tau)(1+\varepsilon/2)x_{2}+\varepsilon+\frac{\varepsilon^{2}}{4}\Big)\notag\\
&\quad+(\varepsilon/2+1-x_{2})^{m-2}-1.
\end{align}
It is worth emphasizing that since the implication of $\tau$ lies in keeping its arbitrary smallness to show the almost sharpness of the blow-up rate, then we require $\tau\in(0,m-2)$ if $m>2$. By contrast, there is no such a restrictive condition in the case of $m=2$. Note that $x_{2}=\varepsilon/2+1-(1-|x_{1}|^{m})^{1/m}$ on $\Gamma_{1}^{+}$. Then for $|x_{1}|\geq(m\varepsilon/(m-2-\tau))^{1/m}$, we have $x_{2}\geq\max\{\frac{\varepsilon}{m-2-\tau},\frac{\varepsilon}{2}\}$. This leads to that
\begin{align*}
\begin{cases}
(\varepsilon/2+1-x_{2})^{m-2}-1\leq0,\\
-(m-1-\tau)x_{2}^{2}+(m-2-\tau)(1+\varepsilon/2)x_{2}+\varepsilon+\frac{\varepsilon^{2}}{4}\leq0.
\end{cases}
\end{align*}
Inserting this into \eqref{EK019}, we deduce
\begin{align}\label{UE012}
\frac{\partial v}{\partial\nu}\leq0,\quad\mathrm{on}\; \Gamma_{1}^{+}\cap \{|x_{1}|\geq(m\varepsilon/(m-2-\tau))^{1/m}\}.
\end{align}
When $|x_{1}|\leq(m\varepsilon/(m-2-\tau))^{1/m},$ we have $v\leq(2m\varepsilon/(m-2-\tau))^{\gamma/m}$ on $\overline{\Omega}_{(m\varepsilon/(m-2-\tau))^{1/m}}$ for a sufficiently small $\varepsilon$. That is,
\begin{align}\label{QM0135}
\underline{w}=0,\quad \text{on }\overline{\Omega}_{(m\varepsilon/(m-2-\tau))^{1/m}}.
\end{align}
Then we have $\frac{\partial \underline{w}}{\partial\nu}=0$ on $\Gamma_{(m\varepsilon/(m-2-\tau))^{1/m}}^{+}$, which, together with \eqref{UE012}, gives that
$\frac{\partial \underline{w}}{\partial\nu}\leq0$ on $\Gamma_{1}^{+}$. By the same argument, we also have $\frac{\partial \underline{w}}{\partial\nu}\leq0$ on $\Gamma_{1}^{-}$.

On the other hand, in exactly the same way to \eqref{ME009}, we obtain that for $x\in\Omega_{1}\cap\{|x_{1}|\geq(m\varepsilon/(m-2-\tau))^{1/m}\}$,
\begin{align*}
&\frac{\mathrm{div}(|\nabla \underline{w}|^{p-2}\nabla \underline{w})|\nabla \underline{w}|^{4-p}}{\gamma^{3}Q^{3\gamma-4m}|x_{1}|^{4m-4}}\geq m+(p-1)(\gamma-1)-\tau-\widehat{C}_{0}|x_{1}|^{m-2},
\end{align*}
where $\widehat{C}_{0}=\widehat{C}_{0}(m,d,\gamma,\tau)$. By picking
\begin{align*}
\hat{r}_{0}=\left(\frac{m+(p-1)(\gamma-1)-\tau}{\widehat{C}_{0}}\right)^{\frac{1}{m-2}},
\end{align*}
we deduce that $\mathrm{div}(|\nabla \underline{w}|^{p-2}\nabla \underline{w})\geq0$ in $\Omega_{\hat{r}_{0}}\cap\{|x_{1}|\geq(m\varepsilon/(m-2-\tau))^{1/m}\}$. This, in combination with \eqref{QM0135}, yields that $\mathrm{div}(|\nabla \underline{w}|^{p-2}\nabla \underline{w})\geq0$ in $\Omega_{\hat{r}_{0}}$.

\end{proof}

We are now ready to prove Theorem \ref{thm005}.
\begin{proof}[Proof of Theorem \ref{thm005}]
For any $m>2$ and $\tau\in(0,m-2)$, let $\underline{w}$ be given by \eqref{AU05} with
\begin{align*}
\gamma=
\begin{cases}
\tau,&\text{if }1<p\leq m+1,\\
\frac{p-m-1+2\tau}{p-1},&\text{if }p>m+1.
\end{cases}
\end{align*}
Observe from symmetry of the domain and the maximum principle that
\begin{align*}
u(0,x_{2})=0,\;\,\mathrm{if}\;|x_{2}|<\varepsilon/2,\quad\text{and }\,u(x)>0,\;\,\text{if }x_{1}>0.
\end{align*}
This, in combination with the comparison principle, yields that $u\geq\frac{1}{C}\underline{w}$ in $\Omega_{\hat{r}_{0}}\cap\{x_{1}>0\}$. Then for a sufficiently small $\varepsilon>0$,
\begin{align*}
u((4m\varepsilon/(m-2-\tau))^{1/m},0)\geq\frac{1}{C}\varepsilon^{\frac{\gamma}{m}},
\end{align*}
where $C=C(m,p,\tau)$. Due to the fact that $u(0)=0$, it then follows from the mean value theorem that
\begin{align*}
\|\nabla u\|_{L^{\infty}(\Omega_{(4m\varepsilon/(m-2-\tau))^{1/m}})}\geq\frac{1}{C}\varepsilon^{\frac{\gamma-1}{m}}.
\end{align*}
The proof is finished.

\end{proof}

\section{Further discussions and remarks}\label{SEC050}

From Theorems \ref{thm001}, \ref{thm003} and \ref{thm005}, it follows that for any $p>1$ and $m>2$, the blow-up rate $\varepsilon^{-1/\max\{p-1,m\}}$ is almost optimal when $d=2$. With regard to the two-dimensional results corresponding to the case of $m=2$, see \cite{DYZ2023}. By contrast, the optimality of the blow-up rate for the nonlinear insulated conductivity problem with $d\geq3$ still remains to be open, although an explicit upper bound in Theorem \ref{thm003} has been given in the case of $p>d+m-1$. Before further breakthrough in rigorously proving the sharpness of the stress blow-up rate for $d\geq3$, we here would like to point out a potentially efficient way to find the exact blow-up rate in dimensions greater than two by studying the local behavior for the solutions to a class of $(d-1)$-dimensional weighted $p$-Laplace equations. This idea originates from \cite{DLY2021}, while the subsequent work \cite{DLY2022} gives its further extension and refinement such that it can be used to deal with general divergence form elliptic equations with H\"{o}lder continuous coefficients in the presence of general strictly convex inclusions. For readers' convenience, we take the Laplace equation for example and give a brief description for the idea in the following.

As shown in \cite{DLY2021,DLY2022}, the key to the reduction from the origin problem to the $(d-1)$-dimensional weighted elliptic problem lies in using the linearity of the Laplace equation, which is achieved by first straightening out the upper and lower boundaries of the thin channel and then integrating the transformed equation in $x_{d}$ to obtain a $(d-1)$-dimensional weighted elliptic equation in a ball. Then spherical harmonic expansion is used to precisely capture the decay rate of solution to the $(d-1)$-dimensional weighted elliptic equation near the origin. Finally, the small difference between these two solutions of different dimensions is estimated to ensure an explicit upper bound on the gradient with its blow-up rate determined by the decay rate exponent captured above.

To be specific, the reduced $(d-1)$-dimensional weighted elliptic problem can be formulated as follows:
\begin{align}\label{WED016}
\begin{cases}
\mathrm{div}(a(x')|x'|^{2}\nabla v(x'))=0,&\mathrm{in}\; B'_{1},\\
\|v\|_{L^{\infty}(B'_{1})}\leq M,
\end{cases}
\end{align}
where $M$ is a positive constant independent of $\varepsilon$, $a(x')>0$ satisfies that $\kappa^{-1}\leq a(x')\leq\kappa$ in $B'_{1}$ for some constant $\kappa>0$, and $\int_{\mathbb{S}^{d-2}}a x_{i}=0$, $i=1,2,...,d-1$, $\mathbb{S}^{d-2}=\{x'\in\mathbb{R}^{d-1}\,|\,|x'|=1\}$. For problem \eqref{WED016}, Dong, Li and Yang \cite{DLY2022} used spherical harmonic expansion to capture the optimal asymptotic behavior of the solution as follows:
\begin{align}\label{EKQ9096}
v(x')=v(0')+O(1)|x'|^{\alpha},\quad\alpha=\frac{-(d-1)+\sqrt{(d-1)^{2}+4\lambda_{1}}}{2},\quad\mathrm{in}\;B'_{1/2},
\end{align}
where $|O(1)|\leq C=C(d,\kappa,M),$ $\lambda_{1}\leq d-2$ is the first nonzero eigenvalue of the following divergence form elliptic operator on $\mathbb{S}^{d-2}$:
\begin{align*}
-\mathrm{div}_{\mathbb{S}^{d-2}}(a(\xi)\nabla_{\mathbb{S}^{d-2}}u(\xi))=\lambda a(\xi) u(\xi),\quad\xi\in\mathbb{S}^{d-2}.
\end{align*}
In particular, $\lambda_{1}=d-2$ if $a$ is a constant, see also \cite{DLY2021}. By finding the exact decay rate exponent $\alpha$ in \eqref{EKQ9096}, they succeeded in sharpening the upper bound in Theorem \ref{thm002} with an explicit blow-up rate $\varepsilon^{-(1-\alpha)/2}$ in the case of $m=2$ and $d\geq3$. Their investigations in \cite{DLY2021,DLY2022} indicate that the optimal stress blow-up rate in high dimensions can be identified by solving the decay rate of solution to the $(d-1)$-dimensional weighted elliptic problem \eqref{WED016} around the origin. This illuminating insight may also break new ground in the exploration of optimal gradient blow-up rate for the corresponding elasticity problem modeled by the Lam\'{e} system in future work.

Although it cannot be reduced as above for nonlinear $p$-Laplace equation with $p\neq2$, it is still an interesting and enlightening way to predict the stress blow-up rate for any $d\geq3$ and $p\neq2$ by studying the local behavior of the solution to the weighted $p$-Laplace equation. Besides its theoretical interest, the weighted elliptic problem has a stronger advantage than the original problem from the view of numerical computations. Indeed, it suffices to use regular meshes to carry out numerical simulations for the solution to weighted $p$-Laplace equation in a ball, which is more effective and feasible in practice than that of the original problem in a narrow region. In addition, with regard to the study on the local regularity for the solution to problem \eqref{WED016}, it can be trace back to the well-known work \cite{FKS1982} completed by Fabes, Kenig and Serapioni in the last century. For the investigation related to the weighted $p$-Laplace equation, see e.g. \cite{MZ2023}.

\section{Appendix:\,The proofs of Lemma \ref{Lem06} and Corollary \ref{COR09} }

\begin{proof}[Proof of Lemma \ref{Lem06}]

We first prove \eqref{DP01} under the condition of $1<p<2$. Note that for any $\xi_{1},\xi_{2}\in\mathbb{R}^{d}$,
\begin{align*}
&|\xi_{1}|^{p-2}\xi_{1}-|\xi_{2}|^{p-2}\xi_{2}\notag\\
&=\int_{0}^{1}\frac{d}{dt}\left[|\xi_{2}+t(\xi_{1}-\xi_{2})|^{p-2}(\xi_{2}+t(\xi_{1}-\xi_{2}))\right]dt\notag\\
&=(\xi_{1}-\xi_{2})\int_{0}^{1}|\xi_{2}+t(\xi_{1}-\xi_{2})|^{p-2}dt\notag\\
&\quad+(p-2)\int_{0}^{1}|\xi_{2}+t(\xi_{1}-\xi_{2})|^{p-4}\langle\xi_{2}+t(\xi_{1}-\xi_{2}),\xi_{1}-\xi_{2}\rangle(\xi_{2}+t(\xi_{1}-\xi_{2}))dt.
\end{align*}
Since $1<p<2$, we then have
\begin{align*}
&\langle|\xi_{1}|^{p-2}\xi_{1}-|\xi_{2}|^{p-2}\xi_{2},\xi_{1}-\xi_{2}\rangle\notag\\
&=|\xi_{1}-\xi_{2}|^{2}\int_{0}^{1}|\xi_{2}+t(\xi_{1}-\xi_{2})|^{p-2}dt\notag\\
&\quad+(p-2)\int_{0}^{1}|\xi_{2}+t(\xi_{1}-\xi_{2})|^{p-4}(\langle\xi_{2}+t(\xi_{1}-\xi_{2}),\xi_{1}-\xi_{2})^{2}dt\notag\\
&\geq(p-1)|\xi_{1}-\xi_{2}|^{2}\int_{0}^{1}|\xi_{2}+t(\xi_{1}-\xi_{2})|^{p-2}dt\notag\\
&\geq(p-1)|\xi_{1}-\xi_{2}|^{2}(|\xi_{1}|+|\xi_{2}|)^{p-2}\geq\frac{p-1}{2^{\frac{2-p}{2}}}|\xi_{1}-\xi_{2}|^{2}(|\xi_{1}|^{2}+|\xi_{2}|^{2})^{\frac{p-2}{2}}.
\end{align*}
That is, \eqref{DP01} holds if $1<p<2$.

Observe that
\begin{align}\label{AMP09}
&\langle|\xi_{1}|^{p-2}\xi_{1}-|\xi_{2}|^{p-2}\xi_{2},\xi_{1}-\xi_{2}\rangle\notag\\
&=\frac{1}{2}(|\xi_{1}|^{p-2}+|\xi_{2}|^{p-2})|\xi_{1}-\xi_{2}|^{2}+\frac{1}{2}(|\xi_{1}|^{p-2}-|\xi_{2}|^{p-2})(|\xi_{1}|^{2}-|\xi_{2}|^{2}).
\end{align}
If $p\geq2$, it then follows from \eqref{AMP09} that
\begin{align*}
\langle|\xi_{1}|^{p-2}\xi_{1}-|\xi_{2}|^{p-2}\xi_{2},\xi_{1}-\xi_{2}\rangle\geq\frac{1}{2}(|\xi_{1}|^{p-2}+|\xi_{2}|^{p-2})|\xi_{1}-\xi_{2}|^{2}.
\end{align*}
Then \eqref{DP02} holds if $p\geq2$.

\end{proof}

\begin{proof}[Proof of Corollary \ref{COR09}]
Recall that $v$ is the solution to equation \eqref{V90} in the annular cylinder $Q_{1.9r,r^{m}}\setminus Q_{0.35r,r^{m}}$. We now use the Neumann boundary to extend the solution along $y_{d}$-axis to a larger domain $Q_{1.9r,2r}\setminus Q_{0.35r,2r}$. For $i,j=1,...,d-1,$ we carry out the odd extension of $\tilde{a}_{id},\tilde{a}_{di},\tilde{b}_{d}$ and the even extension of $\tilde{a}_{ij},\tilde{a}_{dd},\tilde{b}_{i}$ with respect to $y_{d}=r^{m}$, respectively. Subsequently, we perform the periodic extension with the period $4r^{m}$. Also denote by $v,\tilde{a},\tilde{b}$ the extended solution and coefficients. Therefore, $v$ verifies
\begin{align*}
\tilde{a}_{ij}\partial_{ij}v(y)+\tilde{b}_{i}\partial_{i}v(y)=0,\quad\mathrm{in}\;Q_{1.9r,2r}\setminus Q_{0.35r,2r}.
\end{align*}
Let $\hat{a}_{ij}(y)=\tilde{a}_{ij}(ry)$, $\hat{b}_{i}(y)=r\tilde{b}_{i}(ry)$, and $\hat{v}(y)=v(ry).$ Then $\hat{v}$ solves
\begin{align*}
\hat{a}_{ij}\partial_{ij}\hat{v}(y)+\hat{b}_{i}\partial_{i}\hat{v}(y)=0,\quad\mathrm{in}\;Q_{1.9,2}\setminus Q_{0.35,2}.
\end{align*}
Here
\begin{align*}
\frac{I_{d}}{C_{1}}\leq\hat{a}\leq C_{1}I_{d},\quad|\hat{b}|\leq C_{2},
\end{align*}
where $C_{1}=C_{1}(d,m,p,\kappa_{1},\kappa_{2})$, and $C_{2}=C_{2}(d,m,p,\kappa_{1},\kappa_{2},\kappa_{3},\kappa_{4}).$ Observe that $Q_{1.9,2}\setminus Q_{0.35,2}$ is connected in the case when $d\geq3$. It then follows from the Krylov-Safonov theorem (see Section 4.2 in \cite{K1987}) that
\begin{align*}
\sup\limits_{Q_{1.1,1}\setminus Q_{0.4,1}}\hat{v}\leq C\sup\limits_{Q_{1.1,1}}\setminus Q_{0.4,1},
\end{align*}
which yields that
\begin{align*}
\sup\limits_{Q_{1.1r,r^{m}}\setminus Q_{0.4r,r^{m}}}v\leq C\sup\limits_{Q_{1.1r,r^{m}}}\setminus Q_{0.4r,r^{m}},
\end{align*}
where $C=C(d,m,p,\kappa_{1},\kappa_{2},\kappa_{3},\kappa_{4}).$ This, together with \eqref{WA608}, gives that Corollary \ref{COR09} holds.

\end{proof}

\noindent{\bf{\large Acknowledgements.}} The authors would like to thank Prof. C.X. Miao for his constant encouragement and useful discussions. This work was supported in part by the National Key research and development program of
China (No. 2020YFA0712903). Q. Chen was partially supported by the National Natural Science Foundation of China (No. 12071043). Z. Zhao was partially supported by China Postdoctoral Science Foundation (No. 2021M700358).

\bibliographystyle{plain}

\def\cprime{$'$}

\end{document}